%% file: main.tex
\documentclass[11pt,reqno,a4paper]{amsart}

\usepackage[utf8]{inputenc}
\usepackage[english]{babel}
\usepackage{bookmark}
\usepackage{amsmath}
\usepackage{amsthm}
\usepackage{mathtools}

\usepackage{bm}
\usepackage{bbm}
\usepackage{amssymb}
\usepackage{esint}
\usepackage[dvipsnames]{xcolor}
\usepackage{graphicx}
\usepackage{tcolorbox}
\usepackage{float}

\usepackage{caption}
\usepackage{subcaption}
\usepackage{listings}
\usepackage{enumitem}
\usepackage[
  hmarginratio={1:1},               
  vmarginratio={1:1},               
  left=80pt,
  top=80pt,
  heightrounded                     
]{geometry}
\usepackage{csquotes}
\MakeOuterQuote{"}

\usepackage{tikz}
\usetikzlibrary{positioning,
decorations.pathreplacing,
patterns,arrows,math}
\usepackage{pgfplots}
\pgfplotsset{compat=1.15}
\usepackage{mathrsfs}

\usepackage{anyfontsize}            

\DeclareMathOperator*{\argmin}{argmin}

\begin{document}

\theoremstyle{plain}
\newtheorem{theorem}{Theorem}
\newtheorem{corollary}[theorem]{Corollary}
\newtheorem{proposition}[theorem]{Proposition}
\newtheorem{lemma}[theorem]{Lemma}
\newtheorem{definition}[theorem]{Definition}
\newtheorem{remark}[theorem]{Remark}
\newtheorem{example}[theorem]{Example}
\numberwithin{equation}{section}
\newtheorem{assumption}{Assumption}

\newcommand{\N}{\mathbb{N}}
\newcommand{\Z}{\mathbb{Z}}
\newcommand{\Q}{\mathbb{Q}}
\newcommand{\R}{\mathbb{R}}
\newcommand{\C}{\mathbb{C}}
\newcommand{\D}{\mathcal{D}}
\newcommand{\E}{\mathrm{E}}
\newcommand{\e}{\mathrm{e}}
\newcommand{\F}{\mathcal{F}}
\newcommand{\U}{\mathcal{U}}
\newcommand{\X}{\mathcal{X}}
\newcommand{\Y}{\mathcal{Y}}
\renewcommand{\S}{\mathbb{S}}
\renewcommand{\L}{\mathcal{L}}
\renewcommand{\H}{\mathcal{H}}
\renewcommand{\O}{\mathcal{O}}
\renewcommand{\P}{\mathcal{P}}
\newcommand{\eps}{\varepsilon}
\newcommand{\Cc}{\mathcal{C}}
\newcommand{\Rr}{\mathcal{R}}
\newcommand{\Id}{\mathrm{Id}}
\newcommand{\spt}{\mathrm{spt}}
\newcommand{\diam}{\mathrm{diam}}
\newcommand{\Lip}{\mathrm{Lip}}
\newcommand{\inc}{\subseteq}
\newcommand{\mto}{\mapsto}
\newcommand{\isEquivTo}[1]{\underset{#1}{\sim}}
\newcommand{\mres}{\mathbin{
\vrule height 1.6ex depth 0pt width
0.13ex\vrule height 0.13ex depth 0pt width 1.3ex}}
\newcommand{\Lag}{\mathrm{Lag}}
\newcommand{\RLag}{\mathrm{RLag}}
\DeclarePairedDelimiter{\Iint}{\llbracket}{\rrbracket}

\date{\today}
\title[Particle method for a nonlinear
multimarginal OT problem]{Particle method for a nonlinear
multimarginal optimal transport problem}

\author{Adrien Cances}
\address{Adrien Cances.
Université Paris-Saclay, CNRS, Laboratoire de mathématiques d’Orsay, ParMA, Inria Saclay, 91405, Orsay, France}
\email{adrien.cances@universite-paris-saclay.fr}

\author{Quentin Mérigot}
\address{Quentin Mérigot.
Université Paris-Saclay, CNRS, Inria, Laboratoire de mathématiques d’Orsay, 91405, Orsay, France
/
DMA, École normale supérieure, Université PSL, CNRS, 75005 Paris, France 
/
Institut universitaire de France (IUF)}
\email{quentin.merigot@universite-paris-saclay.fr}

\author{Luca Nenna}
\address{Luca Nenna.
Université Paris-Saclay, CNRS, Laboratoire de mathématiques d’Orsay, ParMA, Inria Saclay, 91405, Orsay, France /
Institut universitaire de France (IUF)} 
\email{luca.nenna@universite-paris-saclay.fr}

\begin{abstract}
We study a nonlinear multimarginal optimal transport problem arising in risk management, where the objective is to maximize a spectral risk measure of the pushforward of a coupling by a cost function. Although this problem is inherently nonlinear, it is known to have an equivalent linear reformulation as a multimarginal transport problem with an additional marginal.
We introduce a Lagrangian particle discretization of this problem, in which admissible couplings are approximated by uniformly weighted point clouds, and marginal constraints are enforced through Wasserstein penalization. We prove quantitative convergence results for this discretization as the number of particles tends to infinity. The convergence rate is shown to be governed by the uniform quantization error of an optimal solution, and can be bounded in terms of the geometric properties of its support, notably its box dimension.
In the case of univariate marginals and supermodular cost functions, where optimal couplings are known to be comonotone, we obtain sharper convergence rates expressed in terms of the asymptotic quantization errors of the marginals themselves.
We also discuss the particular case of conditional value at risk, for which the problem reduces to a multimarginal partial transport formulation. Finally, we illustrate our approach with numerical experiments in several application domains, including risk management and partial barycenters, as well as some artificial examples with a repulsive cost.
\end{abstract}

\maketitle

\section{Introduction}

Many problems in risk management require estimating a specific joint law of several known individual distributions. Typically, we have a function $c : \X_1\times\dots\times\X_D\to\R$ which takes as argument $D$ different risk factors, all of which are modeled as random variables $X_j$ with values in $\X_j$, and whose output is a scalar value representing some level of danger.
For instance, $X_1,\dots,X_D$ could be the characteristics of a river (length, width, maximal annual flow rate, etc.) as well as some given design parameters (height of the dyke that surrounds the watercourse), and $c$ some function giving the height of the river in terms of the possible values for these input variables. The randomness of the latter stems both from inaccuracies in their measurements and from their variability in time and/or space.
We refer to the article~\cite{iooss2015review} of Iooss and Lemaître, which is mainly concerned with the topic of sensitivity analysis, for the details of this particular setting. In the latter, the input variables are assumed to be independent, but finding out the \textit{riskiest} dependence structure is an interesting problem in itself, both mathematically speaking and in terms of applications to risk management.

In Iooss and Lemaître's example, several industrial facilities are located near the river, along which an artificial dyke has been built as a precaution. But alas, there is still a risk of flooding. Given some way to quantify the danger of any generic joint law of $X_1,\dots,X_D$, one may therefore may therefore be interested in the worst-case scenario, in order to get a deeper understanding of the different risk factors at stake.
In~\cite{ennaji2024robust}, the authors study the problem in question when the risk is quantified by a generic \textit{spectral risk measure}.

\subsection{Spectral risk measures}

We define the spectral risk measure as a real-valued functional on $\P(\R)$ of the form
\begin{equation}
\Rr_\alpha(\mu) = \int_0^1 F_\mu^{-1}(t) \alpha(t) dt,
\end{equation}
with $\alpha : (0,1) \to [0,+\infty)$ a bounded, nondecreasing, nonnegative function of integral one, and $F_\mu^{-1}$ the quantile function (or inverse cdf) of $\mu$.
Spectral risk measures are a specific form of weighted averages that are biased towards high values. In particular, for any non-constant spectral function $\alpha$, the functional $\Rr_\alpha$ is nonlinear.
Note that for $\alpha \equiv 1$, we recover the expected value $\E\mu = \int_\R z\,d\mu(z)$, since any univariate probability measure is the pushforward of the uniform measure on $(0,1)$ by its quantile function. Another well-known risk measure is the \textit{conditional value at risk} $\mathrm{CVaR}_m$ at level $m \in (0,1)$, obtained via $\alpha \propto \mathbbm{1}_{(1-m,1)}$.
The $\mathrm{CVaR}$, also known as \textit{expected shortfall} or \textit{superquantile}, is the expected value of the (normalized) restriction of $\mu$ to its fraction of mass $m$ with highest values, or in other words, to its right-hand side tail of mass~$m$, as illustrated in Figure~\ref{fig:expected_shortfall}.
We refer to~\cite{rockafellar2014random} for a more comprehensive but concise introduction to the notion of spectral risk measures and their motivating connections with risk.

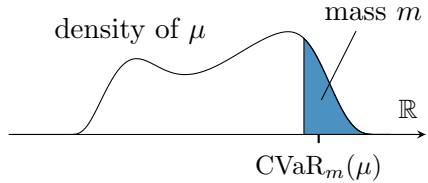
\begin{figure}[!ht]
\centering
\input{expected_shortfall.tikz}
\caption{Conditional value at risk at level $m$, for an absolutely continuous probability measure $\mu$.}
\label{fig:expected_shortfall}
\end{figure}

\subsection{The risk estimation problem}

Denoting $\rho_j \in \P(\X_j)$ the respective marginals laws of $X_1,\dots,X_D$, the problem studied in~\cite{ennaji2024robust} reads
\begin{equation}
\label{eq:intro-risk_problem}
\tag{$\mathcal{P}$}
\max_{\gamma\in\Gamma(\rho_1,\dots,\rho_D)} \Rr_\alpha(c_\#\gamma),
\end{equation}
where $\Gamma(\rho_1,\dots,\rho_D)$ is the set of probability measures on the product space $\boldsymbol{\X} := \X_1\dots\times\dots\X_D$ whose projections on the $\X_j$ are the corresponding $\rho_j$.
Under very general assumptions, the existence of a maximizer is guaranteed by a standard compactness argument. Note that due to the generic spectral risk measure, the maximized functional $\gamma \mapsto \Rr_\alpha(c_\#\gamma)$ is a priori nonlinear. This is a major difference with respect to standard multimarginal optimal transport, for which the objective function $\gamma \mapsto \int c\,d\gamma$ is linear, and corresponds to the trivial spectral function $\alpha = 1$.

\subsubsection*{Formulation as a linear multimarginal transport problem}
As shown in~\cite{ennaji2024robust}, it turns out that~\eqref{eq:intro-risk_problem} is equivalent to the following \textit{linear} multimarginal optimal transport problem
\begin{equation}
\label{eq:intro-equivalent_linear_problem}
\tag{$\mathcal{L}$}
\max_{\eta\in\Gamma(\rho_0,\rho_1,\dots,\rho_D)}
\int_{\R\times\boldsymbol{\X}}
\omega c(x) \,d\eta(\omega,x),
\end{equation}
where the additional marginal is the probability measure on $\R$ defined by $\rho_0 := \alpha_\#\L_{(0,1)}$, and the variable $x = (x_1,\dots,x_D)$ denotes a point in the product space $\boldsymbol{\X}$.
The equivalence essentially stems from the variational formulation of spectral risk measures, which reads
\begin{equation*}
\Rr_\alpha(\mu) =
\max_{\tau \in \Gamma(\rho_0,\mu)} \int_{\R\times\R} \omega z \, d\tau(\omega,z).
\end{equation*}
Indeed, gluing $\gamma$ and $\tau$ along the intermediate marginal $\mu = c_\#\gamma$ allows to maximize a linear objective over a single probability measure $\eta \in \Gamma(\rho_0,\rho_1,\dots,\rho_D)$, which contains both the coupling of the $D$ main marginals and the distribution of the weight $\alpha$ over this coupling (thanks to the additional marginal $\rho_0$).
As highlighted in~\cite{ennaji2024robust}, the above formulation allows existing results on linear multimarginal transport to be used in order to better understand the structure of the solutions to~\eqref{eq:intro-risk_problem}.

\subsection{Discretized problem}

The solutions of optimal transport problems are typically supported on sets of dimension much smaller than that of the ambient space --- especially in multimarginal transport. This renders Eulerian grid-based methods inherently inefficient.
In this paper, we opt for a simple Lagrangian particle discretization, where the admissible discrete measures are those of the form $\delta_Y :=\frac{1}{N}\sum_{i=1}^N\delta_{y_i}$, with $Y = (y_1,\dots,y_N) \in \mathcal{\boldsymbol{\X}}^N$ a cloud of $N$ points in the product space $\X$ whose positions are to be optimized, and $N$ a fixed positive integer.
The marginal constraints are relaxed through quadratic Wasserstein penalization terms, each of which corresponds to the squared Wasserstein distance from some prescribed measure $\rho_j$ to the projection of $\delta_X$ on $\X_j$.
The discretized version of~\eqref{eq:intro-risk_problem} reads as the following finite-dimensional problem, which we prefer to formulate as a minimization problem,
\begin{equation}
\label{eq:intro-discretized_risk_problem}
\tag{$\mathcal{P}_N$}
\min_{Y \in \boldsymbol{\X}^N} -\Rr_\alpha(c_\#\delta_Y) + \lambda_N \sum_{j=1 }^D W_p(\rho_j,\pi^j_\#\delta_Y)^p.
\end{equation}
Here, $\pi^j : \boldsymbol{\X} \to \X_j$ denotes the projection on the $j$-th factor of the product space $\boldsymbol{\X} = \X_1\times\dots\times\X_D$, and $\lambda_N > 0$ is a penalty parameter, whose adequate value will be discussed when dealing with the convergence of~\eqref{eq:intro-risk_problem} to~\eqref{eq:intro-discretized_risk_problem} as $N \to \infty$.
Note that $\Rr_\alpha(c_\#\delta_Y)$ is straightforward to compute numerically. Indeed, it is by definition a weighted sum of the $c(y_i)$, where the weights depend on the order of these values. Denoting $\mathrm{p}_i = \int_{(\frac{i-1}{N},\frac{i}{N})}\alpha(t)dt$, we have
\begin{align*}
\Rr_\alpha(c_\#\delta_Y)
&= \sum_{i=1}^N
\mathrm{p}_{\sigma_{Y}(i)} c(y_i),
\end{align*}
where $\sigma_Y \in \mathfrak{S}_N$ is any permutation such that
$c(y_{\sigma_Y^{-1}(1)}) \leq \dots \leq c(y_{\sigma_Y^{-1}(N)})$.

\subsection{Convergence results}

The convergence of~\eqref{eq:intro-discretized_risk_problem} to the original problem is closely linked to the uniform quantization of a solution of~\eqref{eq:intro-risk_problem}. Our two convergence results are the following, and respectively correspond to Corollary~\ref{cor:convergence_with_dimension} and Corollary~\ref{cor:convergence_with_comonotonicity} of Section~\ref{sec:discretization_and_convergence}.
We denote
\begin{equation*}
\tau_{p,d}(N) =
\begin{cases}
N^{-\frac{1}{\max(p,d)}}
\quad &\mathrm{if} \quad d \neq p,
\\
(\log N)^{\frac{1}{d}} N^{-\frac{1}{d}}
\quad &\mathrm{if} \quad d = p.
\end{cases}
\end{equation*}

\begin{theorem}
\label{meta-thm-1}
Suppose that the marginal distributions $\rho_j$ are compactly supported and that $c$ is $\beta$-Hölder continuous. Let $d$ be the box dimension (see Definition~\ref{def:box_dimension}) of the support of some minimizer of $\F$.
Then, for the sequence of penalty coefficients $\lambda_N = \tau_{p,n}(N)^{-(p-\beta)}$,
we have
\begin{equation}
|\min\F_N - \min\F| = \O
\begin{cases}
N^{-\frac{\beta}{d}}
\quad &\mathrm{if} \quad d > p,
\\
(\log N)^{\frac{\beta}{d}} N^{-\frac{\beta}{d}}
\quad &\mathrm{if} \quad d = p,
\\
N^{-\frac{\beta}{p}}
\quad &\mathrm{if} \quad d < p.
\end{cases}
\end{equation}
Moreover, any weak limit point of an arbitrary sequence of minimizers $\delta_{Y_N} \in \argmin\F_N$ is a minimizer of $\F$.
\end{theorem}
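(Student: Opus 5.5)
We write $\F(\gamma) = -\Rr_\alpha(c_\#\gamma)$ on $\Gamma(\rho_1,\dots,\rho_D)$ and $\F_N(Y) = -\Rr_\alpha(c_\#\delta_Y) + \lambda_N\sum_j W_p(\rho_j,\pi^j_\#\delta_Y)^p$. The plan is to prove matching upper and lower bounds on $\min\F_N - \min\F$. Both directions rest on one regularity property of $\Rr_\alpha$. Since $\alpha$ is bounded, for measures $\mu,\nu$ on $\R$ we have
\begin{equation*}
|\Rr_\alpha(\mu)-\Rr_\alpha(\nu)| \le \|\alpha\|_\infty \int_0^1 |F_\mu^{-1}-F_\nu^{-1}|\,dt = \|\alpha\|_\infty W_1(\mu,\nu).
\end{equation*}
Moreover, for any coupling $\pi$ of $\gamma,\gamma'$ on $\boldsymbol{\X}$, $W_1(c_\#\gamma,c_\#\gamma') \le [c]_\beta \int |x-y|^\beta d\pi \le [c]_\beta W_p(\gamma,\gamma')^\beta$ by Jensen, since $\beta\le 1\le p$. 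Hence $\gamma\mapsto\Rr_\alpha(c_\#\gamma)$ is $\beta$-Hölder with respect to $W_p$.

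\emph{Upper bound.} Let $\gamma^*$ be a minimizer of $\F$ whose support has box dimension $d$. Take $Y_N$ to be an optimal uniform $N$-point quantizer of $\gamma^*$. Standard uniform quantization results for measures whose support has box dimension $d$ (presumably established in the paper's quantization section) give $W_p(\gamma^*,\delta_{Y_N}) \lesssim \tau_{p,d}(N)$. Projections are $1$-Lipschitz, so $W_p(\rho_j,\pi^j_\#\delta_{Y_N}) \le W_p(\gamma^*,\delta_{Y_N})$. This yields
\begin{equation*}
\min\F_N \le \F_N(Y_N) \le \min\F + C\tau_{p,d}(N)^\beta + D\lambda_N\tau_{p,d}(N)^p .
\end{equation*}
With $\lambda_N = \tau(N)^{-(p-\beta)}$ (reading the $n$ in the statement as the relevant dimension $d$), both terms are $O(\tau(N)^\beta)$, which matches the three displayed rates.

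\emph{Lower bound.} Let $Y$ be arbitrary and set $\varepsilon_j = W_p(\rho_j,\pi^j_\#\delta_Y)$. We build an admissible $\tilde\gamma\in\Gamma(\rho_1,\dots,\rho_D)$ close to $\delta_Y$ as follows. Take optimal plans $\pi_j$ between $\pi^j_\#\delta_Y$ and $\rho_j$, and glue them along $\delta_Y$: coordinate $y_{i,j}$ is replaced by a point drawn from the conditional of $\pi_j$ given $y_{i,j}$, independently in $j$. This gives $W_p(\delta_Y,\tilde\gamma)^p \le \sum_j \varepsilon_j^p =: S$, up to a constant depending on the norm on the product. Then
\begin{equation*}
\F_N(Y) \ge \F(\tilde\gamma) - C S^{\beta/p} + \lambda_N S \ge \min\F + \inf_{s\ge0}\bigl(\lambda_N s - C s^{\beta/p}\bigr).
\end{equation*}
Optimizing in $s$ gives $-C'\lambda_N^{-\beta/(p-\beta)} = -C'\tau(N)^{\beta}$ for $\beta<p$. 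When $\beta=p$ (only possible if $p=\beta=1$), the infimum is $0$ once $\lambda_N\ge C$. Combining the two bounds gives the stated estimate on $|\min\F_N-\min\F|$.

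\emph{Limit points.} Let $Y_N$ be minimizers. By the upper bound, $\lambda_N S_N \le \F_N(Y_N) - \min\F + C S_N^{\beta/p} \lesssim \tau^\beta + S_N^{\beta/p}$. Since $\lambda_N\to\infty$, this forces $S_N\to0$, so the marginals of any weak limit $\gamma$ are the $\rho_j$. Compact supports make $W_p$-convergence equivalent to weak convergence, and $\F$ is continuous, so $\F(\gamma) = \lim(-\Rr_\alpha(c_\#\delta_{Y_N})) \le \liminf\F_N(Y_N) = \min\F$, using nonnegativity of the penalty. Hence $\gamma$ is a minimizer.

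The main obstacle I expect is the quantization step: bounding the uniform-weight quantization error of $\gamma^*$ by $\tau_{p,d}(N)$ using only the box dimension of its support. My approach is to cover the support by $M\approx r^{-d}$ balls of radius $r$, assign each a number of atoms proportional to its mass (rounding leaves an $O(M/N)$ mass mismatch, transported over the bounded diameter), and balance $r^p$ against $M/N$. A dyadic multiscale version of this argument should produce the $\max(p,d)$ exponent and the $\log N$ factor in the critical case $d=p$.
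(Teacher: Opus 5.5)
Your proposal is correct and follows the paper's route: the $\beta$-Hölder bound on $\gamma\mapsto\Rr_\alpha(c_\#\gamma)$ (your $\|\alpha\|_\infty$/$W_1$ argument is the $q=\infty$ case of Lemma~\ref{lem:control_on_the_spectral_risk_measure}), the upper bound through an optimal uniform quantizer of a minimizer with rate $\tau_{p,d}(N)$ taken from M\'erigot--Mirebeau's Proposition~12 (which the paper cites rather than reproves, via the multiscale argument you sketch), and the lower bound through the block coupling of Lemma~\ref{lem:N-block_joint_measure} followed by minimizing $\lambda_N s - Cs^{\beta/p}$, which the paper does with Young's inequality. Two minor loose ends remain: when $\beta=p=1$ the prescribed $\lambda_N\equiv 1$ need not exceed $C$ and does not tend to infinity, so that case is not covered (the rate can genuinely fail there, and the paper's standing assumption $\beta/p+1/q\le 1$ with $q<\infty$ excludes it); and in the limit-point step the $Y_N$ need not stay in a fixed compact set, so you should get $W_p$-convergence from $W_p(\delta_{Y_N},\tilde\gamma_N)\lesssim S_N^{1/p}\to 0$ together with the $W_p$-compactness of $\Gamma(\rho_1,\dots,\rho_D)$, a step the paper itself leaves implicit.
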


We can be more precise when the marginals are univariate and the cost function $c$ is supermodular (see Definition~\ref{def:supermodularity}).

\begin{theorem}
\label{meta-thm-2}
Suppose that the marginal distributions $\rho_j$ are univariate, i.e. $\X_j \inc \R$, and that $c$ is $\beta$-Hölder and supermodular. For a probability measure $\nu$ on a Euclidean space, denote
\begin{equation}
e_{p,N}(\nu) =
\min_{z_1,\dots,z_N}
W_p\left(\nu,N^{-1}\sum_{i=1}^N\delta_{z_i}\right)
\end{equation}
its $N$-point uniform quantization error.
Then, for the sequence of penalty coefficients $\lambda_N = h_N^{-(p-\beta)}$,
where
\begin{equation}
h_N = \max_{j \in \{1,\dots,D\}} e_{p,N}(\rho_j),
\end{equation}
we have
\begin{equation}
|\min\F_N - \min\F| = \O(h_N^\beta).
\end{equation}
Moreover, any weak limit point of an arbitrary sequence of minimizers $\delta_{Y_N} \in \argmin\F_N$ is a minimizer of $\F$.
\end{theorem}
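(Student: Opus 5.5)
The proof splits into an upper bound and a lower bound for $\min\F_N - \min\F$, where $\F(\gamma) = -\Rr_\alpha(c_\#\gamma)$ on $\Gamma(\rho_1,\dots,\rho_D)$ and $\F_N(Y)$ is the penalized discrete functional of~\eqref{eq:intro-discretized_risk_problem}. Throughout I take $p \ge 1$, the supports compact, and I use that $\Rr_\alpha$ is Lipschitz with respect to $W_1$, with constant $\|\alpha\|_\infty$. This holds because $|\Rr_\alpha(\mu)-\Rr_\alpha(\nu)| \le \|\alpha\|_\infty \int_0^1 |F_\mu^{-1}-F_\nu^{-1}| = \|\alpha\|_\infty W_1(\mu,\nu)$. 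Combined with $\beta$-Hölder continuity of $c$, it gives for any coupling $\pi$ of $\gamma,\gamma'$ on $\boldsymbol{\X}$:
\begin{equation*}
|\Rr_\alpha(c_\#\gamma)-\Rr_\alpha(c_\#\gamma')| \le \|\alpha\|_\infty [c]_\beta \int |x-x'|^\beta d\pi \le C\, W_p(\gamma,\gamma')^\beta .
\end{equation*}
The last step uses Jensen's inequality, valid since $\beta\le 1\le p$.

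\textbf{Upper bound (consistency).} By the supermodularity result of the paper, the comonotone coupling $\gamma^* = (F_{\rho_1}^{-1},\dots,F_{\rho_D}^{-1})_\#\L_{(0,1)}$ maximizes $\Rr_\alpha(c_\#\cdot)$. I will build a competitor $Y_N$ by quantizing each marginal optimally and arranging the quantizers comonotonically. Take optimal $N$-point uniform quantizers $z^j_1\le\dots\le z^j_N$ of $\rho_j$ and set $y_i=(z^1_i,\dots,z^D_i)$. On the real line, optimal uniform quantizers can be chosen sorted, and $W_p(\rho_j,\frac1N\sum_i\delta_{z^j_i})$ is then realized by the monotone coupling of quantile functions. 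It follows that $\delta_{Y_N} = (G^1,\dots,G^D)_\#\L_{(0,1)}$, where $G^j$ is the step quantile function of the $j$-th quantizer. The coupling of $\gamma^*$ and $\delta_{Y_N}$ through the common variable $t\in(0,1)$ then gives $W_p(\gamma^*,\delta_{Y_N})^p \le \sum_j \|F^{-1}_{\rho_j}-G^j\|_{L^p}^p \le D h_N^p$. Each penalty term is at most $h_N^p$, so
\begin{equation*}
\min\F_N \le \F(\gamma^*) + C h_N^\beta + D\lambda_N h_N^p = \min\F + O(h_N^\beta),
\end{equation*}
since $\lambda_N h_N^p = h_N^\beta$.

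\textbf{Lower bound (stability).} This step is the main obstacle, because $\delta_{Y_N}$ is not admissible. For a minimizer $Y_N$, set $\eps_j = W_p(\rho_j,\pi^j_\#\delta_{Y_N})$. I will construct an admissible $\tilde\gamma\in\Gamma(\rho_1,\dots,\rho_D)$ with $W_p(\tilde\gamma,\delta_{Y_N}) \le (\sum_j\eps_j^p)^{1/p}$. Take optimal plans $\pi_j$ between $\pi^j_\#\delta_{Y_N}$ and $\rho_j$, and glue them along $\delta_{Y_N}$: each atom $y_i$ is split coordinatewise according to the conditional laws $\pi_j(\cdot|y_i^j)$, taken independently across $j$. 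Some care is needed when coordinates coincide, but disintegrating with respect to the index $i$ handles this. The resulting $\tilde\gamma$ has marginals $\rho_j$. Then $\F(\tilde\gamma)\le \F(\delta_{Y_N}) + C(\sum_j \eps_j^p)^{\beta/p}$, and so
\begin{equation*}
\min\F_N \ge \min\F - C s^{\beta/p} + \lambda_N s, \qquad s := \sum_j \eps_j^p .
\end{equation*}
Minimizing $-Cs^{\beta/p}+\lambda_N s$ over $s\ge0$ gives the value $-C'\lambda_N^{-\beta/(p-\beta)} = -C' h_N^\beta$, which is where the choice $\lambda_N = h_N^{-(p-\beta)}$ is essential. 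This needs $\beta<p$; the case $\beta=p=1$ is the exact-penalty regime and I would treat it separately by taking $\lambda_N$ larger than the Lipschitz constant.

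\textbf{Limit points.} The upper bound gives $\lambda_N s_N \le C h_N^\beta$, so $s_N\le C h_N^p\to0$ and hence $W_p(\tilde\gamma_N,\delta_{Y_N})\to 0$. By compactness, any weak limit of $\delta_{Y_N}$ is also a limit of the $\tilde\gamma_N$, so it lies in $\Gamma(\rho_1,\dots,\rho_D)$. Since $\F$ is continuous in $W_p$ and $\F(\tilde\gamma_N)\le \min\F_N + C h_N^\beta \to \min\F$, the limit is a minimizer.
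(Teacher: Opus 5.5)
Your proof is correct and is essentially the paper's own argument: the upper bound quantizes the comonotone optimizer by pairing sorted optimal one-dimensional quantizers (Proposition~\ref{prop:quantization_of_comonotone_plan}), the lower bound is the gluing construction of Lemma~\ref{lem:N-block_joint_measure} followed by optimizing $-Cs^{\beta/p}+\lambda_N s$ (Young's inequality), and the limit-point claim comes from $s_N\lesssim h_N^p$ exactly as in Theorem~\ref{thm:general_convergence_result}. A few small caveats. First, the optimality of the comonotone plan that you invoke (Lemma~\ref{lem:risk_problem_and_supermodularity}) also needs $c$ nondecreasing in each variable, as in Corollary~\ref{cor:convergence_with_comonotonicity}. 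Second, the paper's standing condition $\frac{\beta}{p}+\frac1q\le 1$ with $q<\infty$ forces $\beta<p$, so $\beta=p=1$ is excluded rather than treated separately; with the prescribed $\lambda_N=1$ the claim can genuinely fail there. Third, compactness is not needed, since $W_p(\tilde\gamma_N,\delta_{Y_N})\to0$ identifies weak limits, and the fixed marginals give tightness and uniform $p$-integrability of the $\tilde\gamma_N$.
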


\begin{remark}
In Theorem~\ref{meta-thm-2}, if the $\rho_j$ are moreover assumed to have connected support and to be absolutely continuous with density isolated from $0$ and $\infty$ on their respective supports, then $h_N \lesssim N^{-1}$ and so the infimum of $\F_N$ converges to that of $\F$ at the rate $N^{-\beta}$.
\end{remark}

\subsubsection*{Contributions}
In this paper, we present a particle discretization method for a nonlinear multimarginal transport problem studied in~\cite{ennaji2024robust}, where the maximized quantity is some given spectral risk measure of the pushforward of the plan by a cost function $c$.
We describe the Lagrangian particle discretization in question, and prove a general quantitative convergence rate
of the discretized Problem~\eqref{eq:intro-discretized_risk_problem} to the original Problem~\eqref{eq:intro-risk_problem} as the number of particles goes to infinity. The convergence rate in question corresponds to that of the optimal uniform quantization error of some arbitrary solution of~\eqref{eq:intro-risk_problem}, as the number of Dirac masses goes to infinity. Using results by Mérigot and Mirebeau in~\cite{merigot2016minimal} on the uniform quantization of measures, we find an upper bound for the convergence rate in terms of the box dimension of the support of a solution of~\eqref{eq:intro-risk_problem}.
In~\cite{ennaji2024robust}, the authors prove that under supermodularity and monotonicity assumptions on the cost function $c$, the comonotone plan solves~\eqref{eq:intro-risk_problem}. We leverage this result to provide a more explicit upper bound on the convergence rate, in terms of the asymptotic optimal uniform quantization errors of the marginals $\rho_j$, under these more restrictive assumptions (Theorem~\ref{meta-thm-1}).
We then deal with the specific case where the spectral risk measure is the conditional value at risk, a case for which our discretization is of particular interest. In this framework, Problem~\eqref{eq:intro-risk_problem} becomes an instance of the multimarginal partial optimal transport problem.
At last, we show our numerical simulations for a few interesting cases, in risk management, partial barycenters, and density functional theory.

\section{Preliminaries and problem setting}

In this section, we first recall the definition of spectral risk measures, as well as their variational formulation. We then give the corresponding reformulation of Problem~\eqref{eq:intro-risk_problem} into an ordinary multimarginal transport Problem~\eqref{eq:intro-equivalent_linear_problem} with an additional marginal induced by the risk measure, an equivalence first established in~\cite{ennaji2024robust}.

\subsubsection*{Assumptions}
Throughout the whole paper, the $\X_j \inc \R^{d_j}$ are convex subsets of Euclidean spaces (with potentially different dimensions $d_j$) and the $\rho_j \in \P_p(\X_j)$ are probability measures with finite $p$-moment, where $p \in [1,\infty)$ is a fixed finite exponent.
As a direct consequence, the product set $\boldsymbol{\X} = \X_1\times\dots\times\X_D$ is also convex --- we naturally endow it with Euclidean norm of the embedding product space $\R^{d_1}\times\dots\times\R^{d_D}$ ---
and we have the inclusion $\Gamma(\rho_1,\dots,\rho_D) \inc \P_p(\boldsymbol{\X})$, i.e. any joint measure of the $\rho_j$ also has finite $p$-moment.
For now, we only assume the cost function $c : \boldsymbol{\X} \to \R$ to be measurable.

\medskip

In the introduction, we assumed that the spectral function $\alpha$ was bounded. In fact, up to restricting the domain of $\Rr_\alpha$ to a smaller set of probability measure, it suffices to assume $\alpha$ is in some Lebesgue space $L^q(0,1)$, $q \in (1,\infty]$.
In the remainder of this article, we adopt this more general framework, which accommodates spectral risk measures that exhibit an even stronger preference for large values.

\begin{definition}
\label{def:spectral_risk_measure}
A \textnormal{spectral risk measure} is a functional $\Rr_\alpha : \P_{\bar{q}}(\R) \to \R$ of the form
\begin{equation}
\label{eq:spectral_risk_measure}
\Rr_\alpha(\mu)
= \int_0^1 F_\mu^{-1}(t)\alpha(t)dt,
\end{equation}
where $\alpha : (0,1) \to [0,+\infty)$ is a nonnegative, nondecreasing function of integral one in $L^q(0,1)$ for some $q \in (1,\infty]$, called \textnormal{spectral function}. Hölder's inequality ensures that $\Rr_\alpha$ is indeed well-defined on $\P_{\bar{q}}(\R)$, where $\bar{q}$ is the conjugate Hölder exponent of $q$.
\end{definition}

\begin{remark}[Domain of the spectral risk measure]
If $\alpha \in L^q(0,1)$ for some $q \in (1,\infty]$, the right-hand side of~\eqref{eq:spectral_risk_measure} is well-defined and finite for a much larger collection of probability measures than $\P_{\bar{q}}(\R)$.
For instance, since $\alpha$ is both nonnegative and nondecreasing, it suffices that $\mu$ has left-tail with finite first moment and right-tail with finite $\bar{q}$-moment for the integral in~\eqref{eq:spectral_risk_measure} to be well-defined.
Moreover, decomposing quantile functions into negative and positive parts and using $F_{\mu_s}^{-1} \leq \max\{F_{\mu_0}^{-1}, F_{\mu_1}^{-1}\}$ for $\mu_s = (1-s)\mu_0+s\mu_1$, it is straightforward to show that the domain of any spectral risk measure is a convex subset of $\P(\R)$.
\end{remark}

We will sometimes refer to $\Rr_\alpha$ as the \textit{$\alpha$-risk}, as $\alpha$ uniquely determines the spectral risk measure.
Note that by monotonicity of $\alpha$, its set of discontinuity points is at most countable, and thus Lebesgue negligible, which means $\Rr_\alpha$ is unaffected by the values at these points.

\begin{remark}
For $\alpha\equiv 1$ we naturally retrieve the expected value, since $(F_\mu^{-1})_\#\mathcal{L}_{(0,1)} = \mu$. On the other hand, taking $\alpha_m = m^{-1}\mathbbm 1_{(1-m,1)}$ yields the \textit{conditional value at risk} (or \textit{expected shortfall}) at level $m \in (0,1]$, which we denote $\mathrm{CVar}_{m}(\mu)$. The latter is defined as the expected value of the (normalized) restriction of $\mu$ to its fraction of mass $m$ with highest values.
\end{remark}

Spectral risk measures have the following well known variational characterization in terms of one-dimensional optimal transport with the standard bilinear cost.

\begin{lemma}
\label{lem:spectral_risk_measure_and_OT}
Let $\Rr_\alpha$ be a spectral risk measure and denote $q \in (1,\infty]$ an exponent such that $\alpha \in L^q(0,1)$. Then, for any $\mu \in \P_{\bar{q}}(\R)$ we have
\begin{equation}
\label{eq:spectral_variational}
\Rr_\alpha(\mu) =
\max_{\tau\in\Gamma(\alpha_{\#}\L_{(0,1)},\mu)}
\int_{\R\times\R} z \omega \,d\tau(\omega,z).
\end{equation}
In particular, $\Rr_\alpha$ is concave on $\P_{\bar{q}}(\R)$.
\end{lemma}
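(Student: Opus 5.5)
The plan is to reduce the problem to one-dimensional optimal transport with the bilinear cost $(\omega,z)\mapsto \omega z$. For this cost the comonotone (quantile) coupling is optimal. Set $\rho_0=\alpha_\#\L_{(0,1)}$. Since $\alpha$ is nondecreasing, $\alpha$ coincides Lebesgue-a.e.\ on $(0,1)$ with a generalized inverse of the cdf of $\rho_0$; more precisely $F_{\rho_0}^{-1}(t)=\alpha(t)$ for all but countably many $t$, namely the discontinuity points of $\alpha$. So the comonotone coupling $\tau^*=(F_{\rho_0}^{-1},F_\mu^{-1})_\#\L_{(0,1)}$ lies in $\Gamma(\rho_0,\mu)$, and
\[
\int \omega z\,d\tau^*=\int_0^1 \alpha(t)F_\mu^{-1}(t)\,dt=\Rr_\alpha(\mu).
\]
This integral is finite by Hölder's inequality, because $\alpha\in L^q$ and $F_\mu^{-1}\in L^{\bar q}(0,1)$, the latter being equivalent to $\mu\in\P_{\bar q}(\R)$. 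This gives the inequality "$\le$".

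For the reverse inequality, I would show that every $\tau\in\Gamma(\rho_0,\mu)$ satisfies $\int\omega z\,d\tau\le\int_0^1\alpha F_\mu^{-1}$. First, the integral $\int \omega z\, d\tau$ is well defined: by Young's inequality $|\omega z|\le |\omega|^q/q+|z|^{\bar q}/\bar q$, the integrand is $\tau$-integrable for $q<\infty$. When $q=\infty$, $\omega$ is bounded and $\mu$ has finite first moment, so integrability is immediate.

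The main step is the classical rearrangement (Hardy–Littlewood) inequality. I would prove it through a layer-cake decomposition in $\omega$, which works because $\omega\ge 0$. Write $\omega=\int_0^\infty \mathbbm 1_{\{\omega>s\}}ds$ and apply Fubini (justified by the integrability above, after splitting $z$ into positive and negative parts). This gives
\[
\int \omega z\,d\tau=\int_0^\infty \int \mathbbm 1_{\{\omega>s\}} z\,d\tau\,ds .
\]
For each fixed $s$, the measure $\tau\mres\{\omega>s\}$ has first marginal of mass $m_s=\rho_0((s,\infty))$ and second marginal $\le\mu$. Integrating $z$ against a submeasure of $\mu$ of mass $m_s$ is maximized by the upper tail:
\[
\int_{\{\omega>s\}} z\,d\tau\le\int_{1-m_s}^1F_\mu^{-1}(t)\,dt .
\]
I would prove this one-dimensional "bathtub" bound by writing any submeasure $\nu\le\mu$ as $(F_\mu^{-1})_\#(g\,\L_{(0,1)})$ with $0\le g\le 1$ and $\int g=m_s$, and comparing $\int F_\mu^{-1} g$ with $\int F_\mu^{-1}\mathbbm 1_{(1-m_s,1)}$ using the monotonicity of $F_\mu^{-1}$. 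For the comonotone coupling each of these inequalities is an equality, since $\{\alpha>s\}$ is (up to a null set) the interval $(1-m_s,1)$. Integrating the bound over $s$ therefore returns exactly $\int_0^1\alpha F_\mu^{-1}$, which proves "$\ge$" and shows that the maximum is attained at $\tau^*$.

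The step I expect to be the main obstacle is this bathtub inequality together with the measure-theoretic bookkeeping. That bookkeeping means representing submeasures of $\mu$ when $\mu$ has atoms, and justifying Fubini when $z$ is not bounded below. I would handle the latter by a truncation $z\vee(-K)$ and monotone convergence.

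Concavity then follows directly. By the formula just proved, $\Rr_\alpha$ is a supremum of the functionals $\mu\mapsto\int\omega z\,d\tau$. Given $\mu_s=(1-s)\mu_0+s\mu_1$ and optimal $\tau_0,\tau_1$, the mixture $(1-s)\tau_0+s\tau_1$ belongs to $\Gamma(\rho_0,\mu_s)$. Hence
\[
\Rr_\alpha(\mu_s)\ge(1-s)\Rr_\alpha(\mu_0)+s\Rr_\alpha(\mu_1).
\]
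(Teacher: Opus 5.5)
Your proof is correct. The paper gives no proof of this lemma and simply quotes it as well known. The route it implicitly relies on is the one you take: $F_{\rho_0}^{-1}=\alpha$ a.e., so $\Rr_\alpha(\mu)$ is the bilinear surplus of the comonotone coupling $(F_{\rho_0}^{-1},F_\mu^{-1})_\#\L_{(0,1)}$, and this coupling is optimal for $(\omega,z)\mapsto\omega z$ by one-dimensional optimal transport. The difference is that you prove this optimality from scratch, via the layer-cake formula in $\omega\geq 0$ and a bathtub bound for submeasures of $\mu$, instead of citing the one-dimensional theory.

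This buys something in the present generality. The usual shortcut $\omega z=\frac12(\omega^2+z^2-|\omega-z|^2)$, which reduces to the quadratic cost, breaks down when $\alpha\notin L^2(0,1)$. In that case $q<2$, so $\mu\in\P_2(\R)$, and every coupling has infinite quadratic cost. Your argument, by contrast, uses only $\alpha\geq 0$ and the $L^q$--$L^{\bar q}$ integrability, and it handles atoms of $\mu$ at no extra cost. Your concavity argument, mixing optimal couplings, is also exactly right.

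The technical points you flag as obstacles are lighter than you expect. Fubini--Tonelli applies directly because $\int\omega|z|\,d\tau<\infty$, so no truncation is needed. Any $\nu\le\mu$ is represented by $g=\frac{d\nu}{d\mu}\circ F_\mu^{-1}$, with or without atoms. You can even bypass the representation: for $m\in(0,1)$ and $\kappa=F_\mu^{-1}(1-m)$,
\[
\int z\,d\nu\le \kappa m+\int(z-\kappa)_+\,d\mu=\int_{1-m}^1F_\mu^{-1}(t)\,dt .
\]
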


\subsubsection*{Equivalence of~\eqref{eq:intro-risk_problem} with a linear problem}
Before addressing Lagrangian discretization, let us briefly precise the equivalence of Problem~\eqref{eq:intro-risk_problem} with a linear transport problem that has an additional marginal. This equivalence was first highlighted by Ennaji et al. (see~\cite[Theorem~18]{ennaji2024robust}) and is a direct consequence of the variational formulation described in Lemma~\ref{lem:spectral_risk_measure_and_OT} for spectral risk measures.

\begin{proposition}[{\cite[Theorem~18]{ennaji2024robust}}]
\label{prop:linear_risk_problem_equivalence}
Consider the linear multimarginal problem
\begin{equation}
\label{eq:equivalent_linear_problem}
\tag{$\mathcal{L}$}
\max_{\eta\in\Gamma(\rho_0,\rho_1,\dots,\rho_D)}
\int_{\R\times\boldsymbol{\X}} s\,d\eta,
\end{equation}
where $\rho_0 = \alpha_\#\L_{(0,1)} \in \P(\R)$ and $s(\omega,x) := \omega \,c(x)$ for $(\omega,x) \in \R\times\boldsymbol{\X}$.
A probability measure $\eta$ is a solution to~\eqref{eq:equivalent_linear_problem} if and only if the following conditions hold:
\begin{itemize}
\item
the probability measure 
$\gamma := \pi^{\boldsymbol{\X}}_\#\eta \in \Gamma(\rho_1,\dots,\rho_D)$
is a solution to Problem~\eqref{eq:intro-risk_problem},
\item
the pushforward
$\tau := (\pi^0,c\circ\pi^{\boldsymbol{\X}})_\#\eta \in \Gamma(\rho_0,c_\#\gamma)$
has monotone increasing support,
\end{itemize}
where 
$\pi^{\boldsymbol{\X}}(\omega,x) := x_j \in \X_j$
and
$\pi^0(\omega,x) := \omega \in \R$
are the canonical projections for the product space $\R\times\boldsymbol{\X} = \R\times\X_1\times\dots\times\X_D$.
\end{proposition}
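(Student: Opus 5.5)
The plan is to compare the two optimal values directly, using Lemma~\ref{lem:spectral_risk_measure_and_OT} as the bridge. The key identity is that for any $\eta\in\Gamma(\rho_0,\rho_1,\dots,\rho_D)$, with $\gamma=\pi^{\boldsymbol{\X}}_\#\eta$ and $\tau=(\pi^0,c\circ\pi^{\boldsymbol{\X}})_\#\eta$, the change of variables gives
\[
\int_{\R\times\boldsymbol{\X}} s\,d\eta=\int_{\R\times\R}\omega z\,d\tau(\omega,z),\qquad \tau\in\Gamma(\rho_0,c_\#\gamma).
\]
By Lemma~\ref{lem:spectral_risk_measure_and_OT} the right-hand side is at most $\Rr_\alpha(c_\#\gamma)$. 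Moreover, equality holds if and only if $\tau$ is optimal for the one-dimensional bilinear problem. In dimension one that is equivalent to $\tau$ having monotone increasing support: this is the standard characterization, via cyclical monotonicity for the cost $\omega z$, which on $\R$ is exactly monotonicity of the support.

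I will carry out the argument in three steps.

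\textbf{Step 1 (value inequality).} The identity and the lemma give $\max(\mathcal L)\le\sup(\mathcal P)$.

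\textbf{Step 2 (reverse inequality).} Given any $\gamma\in\Gamma(\rho_1,\dots,\rho_D)$, take the optimal monotone $\tau\in\Gamma(\rho_0,c_\#\gamma)$, namely the comonotone coupling $(\alpha,F^{-1}_{c_\#\gamma})_\#\L_{(0,1)}$. Glue $\tau$ with $(c,\mathrm{Id})_\#\gamma\in\Gamma(c_\#\gamma,\gamma)$ along the common marginal $c_\#\gamma$, using the gluing/disintegration lemma on $\R\times\R\times\boldsymbol{\X}$. Then project out the middle variable to get $\eta$. This $\eta$ has marginals $\rho_0,\rho_1,\dots,\rho_D$. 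On the glued measure the middle coordinate equals $c(x)$ almost surely, so $(\pi^0,c\circ\pi^{\boldsymbol{\X}})_\#\eta=\tau$ and $\int s\,d\eta=\Rr_\alpha(c_\#\gamma)$. Hence $\max(\mathcal L)\ge\sup(\mathcal P)$, the two values coincide, and existence transfers between the problems.

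\textbf{Step 3 (characterization).} Let $\eta$ be optimal for $(\mathcal L)$. Then
\[
\Rr_\alpha(c_\#\gamma)\ge\int s\,d\eta=\max(\mathcal L)=\max(\mathcal P)\ge\Rr_\alpha(c_\#\gamma),
\]
so both inequalities are equalities. The first equality says $\tau$ is optimal, hence monotone; the second says $\gamma$ solves $(\mathcal P)$. Conversely, if $\gamma$ solves $(\mathcal P)$ and $\tau$ is monotone, then $\int s\,d\eta=\Rr_\alpha(c_\#\gamma)=\max(\mathcal P)=\max(\mathcal L)$, so $\eta$ is optimal.

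Integrability needs some care throughout. We need $c_\#\gamma\in\P_{\bar q}(\R)$, or at least inside the domain of $\Rr_\alpha$, so that the lemma applies and $\omega z$ is $\tau$-integrable by Hölder. I will assume that $c$ has suitable growth, e.g. is bounded on compact supports as in the main theorems, or else restrict to plans with finite value.

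The main obstacle I expect is the equivalence "optimal $\iff$ monotone support" for the one-dimensional bilinear problem when the measures are not compactly supported. Sufficiency of monotonicity follows because a monotone coupling is the quantile coupling $(F_{\rho_0}^{-1},F_\mu^{-1})_\#\L_{(0,1)}$, which attains the value given by the lemma since $F^{-1}_{\rho_0}=\alpha$ a.e. Necessity follows from uniqueness of the optimal plan for $\omega z$ in 1D, via cyclical monotonicity (the cost is finite and integrable under the Hölder bound), which forces any optimal plan to have monotone support.
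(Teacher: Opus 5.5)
Your proposal is correct and follows the route the paper indicates: the paper defers the proof to Ennaji et al., but its introduction sketches exactly this argument. That argument combines the variational formula of Lemma~\ref{lem:spectral_risk_measure_and_OT}, gluing $\tau$ with $(c,\mathrm{Id})_\#\gamma$ along the intermediate marginal $c_\#\gamma$ to identify the two optimal values, and the one-dimensional fact that optimal couplings for $\omega z$ are exactly those with monotone support. You do not need the extra growth assumption on $c$: under the H\"older assumptions of Section~\ref{sec:discretization_and_convergence}, the proof of Lemma~\ref{lem:control_on_the_spectral_risk_measure} already gives $c_\#\gamma\in\P_{\bar q}(\R)$ for every $\gamma\in\Gamma(\rho_1,\dots,\rho_D)$, which makes $\omega z$ integrable for every $\tau\in\Gamma(\rho_0,c_\#\gamma)$.
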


\section{Discretization and convergence results}
\label{sec:discretization_and_convergence}

We wish to numerically solve the following \textit{convex} optimization problem
\begin{equation}
\label{eq:risk_problem}
\tag{$\mathcal{P}$}
\max_{\gamma \in \Gamma(\rho_1,\dots,\rho_D)}
\Rr_\alpha(c_\#\gamma).
\end{equation}
The Lagrangian particle discretization scheme we investigate reads
\begin{equation}
\label{eq:discretized_risk_problem}
\tag{$\mathcal{P}_N$}
\min_{Y \in \boldsymbol{\X}^N} -\Rr_\alpha(c_\#\delta_Y) + \lambda_N \sum_{j=1 }^D W_p(\rho_j,\pi^j_\#\delta_Y)^p,
\end{equation}
with $\lambda_N > 0$ a penalty parameter whose dependence on the number of particles $N$ we shall deal with in the present section.
Note that thanks to the $\rho_j$ having finite $p$-moment, the penalty terms are all finite.

In order to prove (some notion of) convergence of~\eqref{eq:discretized_risk_problem} to~\eqref{eq:risk_problem}, one needs some control on the spectral risk measures of the $c$-pushforwards in terms of the joint measures themselves. To this end, we make the following assumptions.

\begin{assumption}
\label{a:p_Lq_and_beta-Hölder}
We have $p \in [1,\infty)$, and there exists $q \in (1,\infty)$ and $\beta \in (0,1]$ such that
\begin{enumerate}
\item
$\alpha \in L^q(0,1)$,
\item
$c$ is $\beta$-Hölder, with Hölder constant $C_{\mathrm{H}}$,
\item
$\frac{\beta}{p} + \frac{1}{q} \leq 1$.
\end{enumerate}
We denote $\bar{q} \in (1,\infty)$ the conjugate Hölder exponent of $q$, i.e.
$1/q + 1/\bar{q} = 1$.
\end{assumption}

\begin{remark}
Under these assumptions, Proposition~\ref{prop:linear_risk_problem_equivalence} still holds without any boundedness assumption on $\alpha$.
\end{remark}

Unless stated otherwise, the symbol $\lesssim$ will hide positive constants that only depend on $D$, $p$, $q$, $\alpha$, $\beta$, and $c$.
The following lemma, valid under the above assumptions, is a key ingredient for the convergence results that will follow. It is merely an extension of the Lipschitz continuity results in~\cite[Lemma~33,~Proposition~34]{ennaji2024robust} to the more general case of Hölder cost functions and $L^p$ spectral functions.

\begin{lemma}[Hölder continuity of the objective function]
\label{lem:control_on_the_spectral_risk_measure}
Suppose that Assumption~\ref{a:p_Lq_and_beta-Hölder} holds.
Then, $\Rr_\alpha(c_\#\cdot)$ is finite on the convex set $\P_p(\boldsymbol{\X})$, and for any $\gamma,\tilde{\gamma} \in \P_p(\boldsymbol{\X})$ we have
\begin{equation}
\label{ineq:control_on_the_spectral_risk_measure}
|\Rr_\alpha(c_\#\gamma) - \Rr_\alpha(c_\#\tilde{\gamma})|
\leq
\|\alpha\|_{L^q} C_{\mathrm{H}}
W_p(\gamma,\tilde{\gamma})^\beta.
\end{equation}
In other words, the map $\gamma \mto \Rr_\alpha(c_\#\gamma)$ is both finite and $\beta$-Hölder on the $p$-Wasserstein space on $\boldsymbol{\X}$, with Hölder constant $\|\alpha\|_{L^q}C_{\mathrm{H}}$.
\end{lemma}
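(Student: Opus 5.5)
We will prove the Hölder estimate by coupling, and deduce finiteness from it. Fix $\gamma,\tilde\gamma\in\P_p(\boldsymbol{\X})$ and an optimal coupling $\pi\in\Gamma(\gamma,\tilde\gamma)$ for $W_p$. The pushforward $(c\times c)_\#\pi$ is then a coupling of $\mu:=c_\#\gamma$ and $\tilde\mu:=c_\#\tilde\gamma$. Its $\beta$-Hölder continuity gives
\begin{equation*}
\int |c(x)-c(y)|^{p/\beta}\,d\pi(x,y) \le C_{\mathrm{H}}^{p/\beta}\int |x-y|^p\,d\pi = C_{\mathrm{H}}^{p/\beta} W_p(\gamma,\tilde\gamma)^p ,
\end{equation*}
and therefore $W_{p/\beta}(\mu,\tilde\mu)\le C_{\mathrm{H}}W_p(\gamma,\tilde\gamma)^\beta$. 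Since $p/\beta\ge p\ge 1$, this is a genuine Wasserstein distance on $\R$.

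Next we bound differences of $\Rr_\alpha$ on the line. On $\R$ the quantile coupling $(F_\mu^{-1},F_{\tilde\mu}^{-1})_\#\L_{(0,1)}$ is optimal for every convex cost $|\cdot|^r$ with $r\ge1$. Hence
\begin{equation*}
|\Rr_\alpha(\mu)-\Rr_\alpha(\tilde\mu)| \le \int_0^1 |F_\mu^{-1}-F_{\tilde\mu}^{-1}|\,\alpha\,dt \le \|\alpha\|_{L^q}\,\|F_\mu^{-1}-F_{\tilde\mu}^{-1}\|_{L^{\bar q}(0,1)} = \|\alpha\|_{L^q}W_{\bar q}(\mu,\tilde\mu).
\end{equation*}
Assumption~\ref{a:p_Lq_and_beta-Hölder}(3) reads $\beta/p\le 1-1/q=1/\bar q$, i.e.\ $\bar q\le p/\beta$. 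By monotonicity of $W_r$ in $r$ (Jensen on $(0,1)$), $W_{\bar q}\le W_{p/\beta}$. Chaining this with the first step yields~\eqref{ineq:control_on_the_spectral_risk_measure}.

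For finiteness, take $\tilde\gamma=\delta_{x_0}$ for some $x_0\in\boldsymbol{\X}$. Then $c_\#\tilde\gamma=\delta_{c(x_0)}$ and $\Rr_\alpha(\delta_{c(x_0)})=c(x_0)$, since $\alpha$ integrates to one. Also $W_p(\gamma,\delta_{x_0})<\infty$ because $\gamma\in\P_p$. The first step shows $c_\#\gamma\in\P_{p/\beta}(\R)\subseteq\P_{\bar q}(\R)$, so $\Rr_\alpha(c_\#\gamma)$ is defined by Definition~\ref{def:spectral_risk_measure}. The estimate above then bounds it by $|c(x_0)|+\|\alpha\|_{L^q}C_{\mathrm{H}}W_p(\gamma,\delta_{x_0})^\beta<\infty$.

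Convexity of $\P_p(\boldsymbol{\X})$ is standard. The only delicate point is checking that all quantities are well defined before applying Hölder's inequality, namely $c_\#\gamma\in\P_{\bar q}$. This follows from the moment transfer in the first step combined with the exponent condition (3), so I expect no real obstacle.
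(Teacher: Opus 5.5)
Your proof is correct and follows essentially the same route as the paper. Both arguments use Hölder's inequality on $(0,1)$ to reduce to $W_{\bar q}(c_\#\gamma,c_\#\tilde\gamma)$, then transfer through the $\beta$-Hölder continuity of $c$ using $\beta\bar q\le p$. The only difference is cosmetic: you apply exponent monotonicity on $\R$ (via $W_{\bar q}\le W_{p/\beta}$), whereas the paper applies it on $\boldsymbol{\X}$ (via $W_{\beta\bar q}\le W_p$), and your version has the mild advantage of never using a Wasserstein-type quantity with exponent $\beta\bar q$, which may be below $1$.
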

\begin{proof}
We first show that $c$ pushes forwards any measure with finite $p$-moment to a measure with finite 
$\bar{q}$-moment. Indeed, for $\gamma \in \P_p(\boldsymbol{\X})$, the $\bar{q}$-moment of $c_\#\gamma$ with respect to any given $x_o \in \boldsymbol{\X}$ writes
\begin{align*}
\int_{\boldsymbol{\X}} |c(x)-c(x_o)|^{\bar{q}} d\gamma(x)
&\leq
C_{\mathrm{H}}\int_{\boldsymbol{\X}} |x-x_o|^{\beta \bar{q}} d\gamma(x).
\end{align*}
The last integral is equal to $W_{\beta\bar{q}}(\gamma,\delta_{x_o})$ to the power $\beta\bar{q}$, and since the third point of Assumption~\ref{a:p_Lq_and_beta-Hölder} reads $\beta\bar{q} \leq p$, this Wasserstein distance is bounded by the finite quantity $W_p(\gamma,\delta_{x_o})$. This shows that $c_\#\gamma \in \P_{\bar{q}}(\boldsymbol{\X})$, which implies $\Rr_\alpha(c_\#\gamma)$ is finite thanks to Hölder's inequality and to $\alpha$ being $L^q$.
If $\tilde{\gamma}$ is another measure in $\P_p(\boldsymbol{\X})$, Hölder's inequality also yields
\begin{align*}
|\Rr_\alpha(c_\#\gamma) - \Rr_\alpha(c_\#\tilde{\gamma})|
&\leq
\int_0^1 \alpha |F_{c_\#\gamma^\ast}^{-1}-F_{c_\#\gamma_N}^{-1}|
\\
&\leq
\|\alpha\|_{L^q}
\|F_{c_\#\gamma}^{-1} - F_{c_\#\tilde{\gamma}}^{-1}\|_{L^{\bar{q}}}.
\end{align*}
Since
$\|F_{c_\#\gamma}^{-1}-F_{c_\#\tilde{\gamma}}^{-1}\|_{L^{\bar{q}}} = W_{\bar{q}}(c_\#\gamma,c_\#\tilde{\gamma})$, it remains to control the distance between the two pushforwards. Precisely, $\beta$-Hölder regularity of $c$ implies that
$W_{\bar{q}}(c_\#\gamma,c_\#\tilde{\gamma})
\leq C_\mathrm{H}
W_{\beta\bar{q}}(\gamma,\tilde{\gamma})^\beta$,
and since by assumption $\beta\bar{q} \leq p$ the desired inequality follows.
\end{proof}

To study convergence of~\eqref{eq:discretized_risk_problem} to~\eqref{eq:risk_problem}, define the functionals
$\F, \F_N : \P(\boldsymbol{\X})
\rightarrow \R\cup\{+\infty\}$
by
\begin{align*}
\begin{cases}
\F(\gamma)
&= -\Rr_\alpha(c_\#\gamma)
+ \chi_{\Gamma(\rho_1,\dots,\rho_D)}(\gamma),
\\[4pt]
\F_N(\gamma)
&= -\Rr_\alpha(c_\#\gamma)
+ \lambda_N \sum_{j=1}^D W_p(\rho_j,\pi^j_\# \gamma)^p
+ \chi_{\D_N}(\gamma),
\end{cases}
\end{align*}
where
$\D_N = \{\delta_Y : Y \in \boldsymbol{\X}^N\}$
is the domain of the discretized functional, consisting of all uniform clouds of $N$ points in the product space. Of course, Problems~(\ref{eq:risk_problem}) and~(\ref{eq:discretized_risk_problem}) respectively correspond to the minimization of $\F$ and $\F_N$. Here, the notation $\chi_E$ corresponds to the characteristic function of the set $E$ in the sense of convex analysis: $\chi_E(x)=0$ if $x\in E$, $+\infty$ otherwise.

\begin{proposition}[Existence of solutions]
\label{prop:existence_of_solutions}
Under Assumption~\ref{a:p_and_beta-Hölder}, Problems~\eqref{eq:discretized_risk_problem} and~\eqref{eq:risk_problem} both have at least one solution.
\end{proposition}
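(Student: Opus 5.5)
For Problem~\eqref{eq:risk_problem}, I will use the direct method in the weak topology. The set $\Gamma(\rho_1,\dots,\rho_D)$ is nonempty because it contains the product measure $\rho_1\otimes\dots\otimes\rho_D$. It is tight, since each marginal is tight and a product of compact sets is compact. It is weakly closed, since marginal constraints pass to weak limits. Prokhorov's theorem therefore makes it weakly compact.

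\emph{Continuity of the objective.} I would rather use $W_p$-continuity, because Lemma~\ref{lem:control_on_the_spectral_risk_measure} gives $\beta$-Hölder continuity of $\gamma\mapsto\Rr_\alpha(c_\#\gamma)$ with respect to $W_p$. The key observation is that on $\Gamma(\rho_1,\dots,\rho_D)$ weak convergence upgrades to $W_p$ convergence. Indeed, the $p$-moments are uniformly integrable, because $|x|^p\lesssim\sum_j|x_j|^p$ and each $\int_{|x_j|>R}|x_j|^p\,d\rho_j\to0$ as $R\to\infty$ with a bound independent of $\gamma$. So $\Gamma(\rho_1,\dots,\rho_D)$ is compact in $(\P_p(\boldsymbol{\X}),W_p)$. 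The objective is continuous on it, and a maximizer exists. Equivalently, $\F$ attains its minimum.

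\emph{The discrete problem.} Write $G(Y)$ for the objective of Problem~\eqref{eq:discretized_risk_problem}, defined on $\boldsymbol{\X}^N$. I will show that $G$ is continuous and coercive.
\begin{itemize}
\item \emph{Continuity.} If $Y^k\to Y$, then $W_p(\delta_{Y^k},\delta_Y)\le\max_i|y_i^k-y_i|\to0$. Lemma~\ref{lem:control_on_the_spectral_risk_measure} then gives continuity of the risk term. The penalty terms are continuous by the triangle inequality for $W_p$ together with $W_p(\pi^j_\#\delta_{Y^k},\pi^j_\#\delta_Y)\le W_p(\delta_{Y^k},\delta_Y)$.
\item \emph{Coercivity.} I will show $G(Y)\to+\infty$ as $\max_i|y_i|\to\infty$.
\begin{itemize}
\item For the penalty, $W_p(\rho_j,\pi^j_\#\delta_Y)\ge W_p(\delta_0,\pi^j_\#\delta_Y)-W_p(\delta_0,\rho_j)$. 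Summing over $j$, the penalty is at least of order $\lambda_N\bigl(N^{-1/p}\max_i|y_i|-C\bigr)_+^p$, with $C$ depending only on the $p$-moments of the $\rho_j$.
\item For the risk, Lemma~\ref{lem:control_on_the_spectral_risk_measure} applied against a fixed point $x_o$ gives $|\Rr_\alpha(c_\#\delta_Y)|\le|c(x_o)|+\|\alpha\|_{L^q}C_{\mathrm{H}}W_p(\delta_Y,\delta_{x_o})^\beta$. Since $W_p(\delta_Y,\delta_{x_o})\lesssim\max_i|y_i|+|x_o|$, this grows at most like $(\max_i|y_i|)^\beta$.
\end{itemize}
Since $\beta\le1\le p$, the penalty grows at least like the $p$-th power of $\max_i|y_i|$ while the risk grows only like its $\beta$-th power, so $G$ is coercive.
\end{itemize}
Sublevel sets of $G$ are then closed and bounded in the finite-dimensional space, hence compact; here I use that $\boldsymbol{\X}$ is closed, or else I take the closure, since the objective extends continuously. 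A minimizer of $G$ therefore exists.

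\emph{Expected obstacle.} The main difficulty is the continuity of $\gamma\mapsto\Rr_\alpha(c_\#\gamma)$ for Problem~\eqref{eq:risk_problem}, because $c$ may be unbounded, so weak convergence alone would not suffice. The uniform integrability argument handles this. A minor technical point is closedness of the convex sets $\X_j$. If they are not closed, I would assume closedness as implicit in the standing assumptions, or note that minimizers remain in $\boldsymbol{\X}$ because of convexity. For the marginal constraint this holds automatically: the support of any $\gamma\in\Gamma(\rho_1,\dots,\rho_D)$ lies in the closure of $\boldsymbol{\X}$ and has full measure on $\boldsymbol{\X}$.
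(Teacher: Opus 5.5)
\textbf{Verdict.} Your route is the paper's: compactness of $\Gamma(\rho_1,\dots,\rho_D)$ in $W_p$ with the continuity of Lemma~\ref{lem:control_on_the_spectral_risk_measure} for \eqref{eq:risk_problem}, and coercivity plus continuity for \eqref{eq:discretized_risk_problem}. You are more careful than the paper, which invokes only coercivity of the penalty and never checks whether $-\Rr_\alpha(c_\#\delta_Y)$ can drive the objective to $-\infty$. However, the step where you check this is wrong as justified, and one of your fallbacks is false.

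\textbf{The coercivity step.} From $\beta\le 1\le p$ you only get $\beta\le p$. When $\beta=p=1$, the penalty and the risk both grow linearly in $\max_i|y_i|$, so coercivity depends on the size of $\lambda_N$ and can fail. Take $N=1$, so that $\Rr_\alpha(c_\#\delta_Y)=c(y_1)$ for every $\alpha$. Let $\X_j=\R$, $c(x)=\sum_j x_j$, $p=1$, and let the $\rho_j$ be compactly supported. Once every $y_j\ge\sup\spt\rho_j$, the objective equals $\sum_j\bigl[(\lambda_N-1)y_j-\lambda_N\int x_j\,d\rho_j\bigr]$. This tends to $-\infty$ whenever $\lambda_N<1$, so there is no minimizer. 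This example satisfies Assumption~2 as cited, so the proposition read literally fails there.

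\textbf{The fix.} You need $\beta<p$ strictly. It comes from Assumption~1, which you (like the paper) need anyway to apply Lemma~\ref{lem:control_on_the_spectral_risk_measure}. There $q<\infty$ and $\beta/p+1/q\le 1$ give $\beta/p\le 1-1/q<1$. With that, your comparison of $\lambda_N N^{-1}(\max_i|y_i|)^p$ against $(\max_i|y_i|)^\beta$ is valid.

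\textbf{Closedness of the $\X_j$.} Drop the fallback that minimizers stay in $\boldsymbol{\X}$ by convexity; it is false. Take $N=1$, $\X_j=(0,1)$, $\rho_j$ uniform, $c(x)=\sum_j x_j$ and $p=2$. The objective is $\sum_j\bigl[-y_j+\lambda_N(y_j^2-y_j+\tfrac13)\bigr]$. When $\lambda_N\le 1$ it is strictly decreasing in each $y_j\in(0,1)$, so it has no minimizer in $\boldsymbol{\X}$, even though Assumption~1 holds (with $\alpha\equiv 1$, $q=2$, $\beta=1$). So closedness of the $\X_j$ must be assumed, as the paper implicitly does when it extracts a convergent subsequence of clouds. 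Your treatment of \eqref{eq:risk_problem} is unaffected: weak limits of elements of $\Gamma(\rho_1,\dots,\rho_D)$ keep the marginals $\rho_j$, hence are concentrated on $\boldsymbol{\X}$.
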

\begin{proof}
Thanks to finite $p$-moment of the marginal distributions $\rho_j$ and to continuity of the objective function with respect to $W_p$, existence of a solution to~\eqref{eq:risk_problem} follows from standard arguments.
To see that~\eqref{eq:discretized_risk_problem} also has a solution, consider a minimizing sequence $\{\delta_{Y^\ell}\}_{\ell=0}^\infty \inc \D_N$. By coercivity of the penalty terms with respect to the positions of the particles, the latter remain in some fixed compact set throughout the sequence.
Up to extraction of a subsequence, we can therefore assume that $Y^\ell$ converges to some limit cloud $Y$ as $\ell \to \infty$, and uniform boundedness of the $Y^\ell$ ensures that the corresponding probability measures $\delta_{Y^\ell}$ converge to $\delta_Y$ for the $W_p$ distance.
Convergence of the values $\F_N(\delta_{Y^\ell})$ to $\F_N(\delta_Y)$ then directly follows from continuity of $\Rr_\alpha(c_\#\cdot)$ and of projections on subspaces, with respect to the $p$-Wasserstein distance.
\end{proof}

\subsection{Convergence of the discretized problems}
\label{susbec:convergence_standard}

We start by analyzing the two different parts for the study of the convergence.
In particular in order to get an estimation of $(\min\F_N - \min\F)$ we provide an upper and a lower bound and then optimize them in order to find the optimal $\lambda_N$ giving the matching bound. We describe briefly the strategy of the proof before detailing the rigorous step:

$\bullet$ {\bf Upper bound.} To estimate $(\min\F_N - \min\F)$, one must approach a fixed minimizer $\gamma^* \in \argmin \F$ by some $N$-point uniform cloud, and thanks to the control on the spectral risk measures given by Lemma~\ref{lem:control_on_the_spectral_risk_measure}, it suffices that the point cloud be close to $\gamma^*$ for the $p$-Wasserstein distance. The penalty terms will indeed also be controlled, since projecting two measures one some common set can only decrease their Wasserstein distance. In the end, we get a bound of the form
\begin{equation}
\label{eq:general_bound_on_FN-F}
\min\F_N - \min\F
\lesssim
u_N^\beta
+ \lambda_N u_N^p,
\end{equation}
where $u_N$ is the $p$-Wasserstein distance from the considered $N$-point uniform cloud to $\gamma^*$.
To minimize the right-hand side, one first naturally takes the point cloud which minimizes the latter distance, so that $u_N$ is the optimal $N$-point uniform quantization error of $\gamma^*$ with respect to $W_p$.

Note that since the minimizer $\gamma^*$ is arbitrary, properties of a \textit{single} minimizer can be harnessed to obtain asymptotic bounds.

$\bullet$ {\bf Lower bound.} On the other hand, lower bounding $(\min\F_N - \min\F)$ requires to approach a point cloud $\delta_{Y^*_N} \in \argmin\F_N$ minimizing the discretized functional by some joint measure $\gamma$ of the marginals $\rho_j$. 
We exploit the discrete nature of the point cloud to construct a particular joint measure $\gamma_N$ whose distance to the point cloud is controlled by that of the respective projections.
This constructed measure is a block approximation made of $N$ equal-mass blocks --- one for each of the $N$ points of the cloud --- induced by the $W_p$-optimal transports from the $\rho_j$ to the respective projections of the point cloud.
Plugging in $\gamma_N$ to upper bound the minimum of $F$, we thus have
\begin{equation}
\label{eq:general_bound_on_F-FN}
\min\F_N - \min\F
\gtrsim
\lambda_N v_N^p
- Cv_N^\beta,
\end{equation}
where $v_N$ denotes the $p$-Wasserstein distance from $\gamma_N$ to $\delta_{Y_N^*}$ and $C > 0$ is some positive constant depending on the cost function $c$, on the spectral function $\alpha$, and on $D$ the number of marginals.

$\bullet$ {\bf Matching bound.} Taking now $\lambda_N=\big(\frac{1}{u_N}\big)^{p-\beta}$ in the upper bound and using mainly Young's inequality in the lower bound, \eqref{eq:general_bound_on_FN-F} and \eqref{eq:general_bound_on_F-FN} rewrite respectively
\[
\min\F_N - \min\F \lesssim u_N^\beta,
\]
\[
\min\F_N - \min\F
\gtrsim
-\lambda_N^{-\frac{\beta}{p-\beta}},
\]
from which the matching bound easily follows
\[\boxed{|\min\F_N - \min\F|=\O(u_N^\beta).}\]

We now explicitly construct the joint measure of the $\rho_j$ used to approach a uniform cloud of $N$ points.

\begin{lemma}[Block approximation]
\label{lem:N-block_joint_measure}
Let $\delta_Y \in \D_N$ be a uniform cloud of $N$ points in $\boldsymbol{\X}$, and denote $\delta_{Y_j}$ its projection on $\X_j$. There exists a joint measure $\gamma \in \Gamma(\rho_1,\dots,\rho_D)$ such that
\begin{equation}
W_p(\gamma,\delta_Y)
\lesssim
\left(\sum_{j=1}^D W_p(\rho_j,\delta_{Y_j})^p\right)^{\frac{1}{p}}.
\end{equation}
\end{lemma}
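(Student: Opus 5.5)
We will build $\gamma$ as a block approximation: split each $\rho_j$ into $N$ pieces of mass $1/N$ according to an optimal plan, and glue the $j$-th pieces belonging to the same particle $y_i$ by a product coupling. Write $Y=(y_1,\dots,y_N)$ with $y_i=(y_i^1,\dots,y_i^D)$, so that $\delta_{Y_j}=\frac1N\sum_i\delta_{y_i^j}$.

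For each $j$, fix a $W_p$-optimal plan $\theta_j\in\Gamma(\rho_j,\delta_{Y_j})$. The points $y_i^j$ may coincide, so a plan between $\rho_j$ and $\delta_{Y_j}$ does not directly assign mass to the index $i$. We therefore lift it to the index set: let $I_j(z)=\{i: y_i^j=z\}$ and split the mass $\theta_j$ assigns to $\X_j\times\{z\}$ equally among the indices $i\in I_j(z)$. This gives sub-probability measures $\rho_j^i$ on $\X_j$ with $\rho_j^i(\X_j)=1/N$ and $\sum_i\rho_j^i=\rho_j$. Moreover,
\[
\sum_{i}\int|x_j-y_i^j|^p\,d\rho_j^i(x_j)=W_p(\rho_j,\delta_{Y_j})^p,
\]
since the cost is unchanged by the splitting. (Equivalently, one may disintegrate $\theta_j$ and use a uniform auxiliary variable to break ties.)

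Next, define
\[
\gamma=\sum_{i=1}^N N^{D-1}\,\rho_1^i\otimes\cdots\otimes\rho_D^i .
\]
Each block has mass $N^{D-1}N^{-D}=1/N$. Its $j$-th marginal is $N^{D-1}N^{-(D-1)}\rho_j^i=\rho_j^i$. Summing over $i$ shows that $\gamma\in\Gamma(\rho_1,\dots,\rho_D)$.

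To bound $W_p(\gamma,\delta_Y)$, use the coupling that transports block $i$ entirely onto the Dirac mass $y_i$. Its marginals are $\gamma$ and $\frac1N\sum_i\delta_{y_i}=\delta_Y$. Its cost is
\[
\sum_i N^{D-1}\int|x-y_i|^p\,d(\rho_1^i\otimes\cdots\otimes\rho_D^i)(x).
\]
Apply $|x-y_i|^p=\bigl(\sum_j|x_j-y_i^j|^2\bigr)^{p/2}\le C_{D,p}\sum_j|x_j-y_i^j|^p$, where $C_{D,p}=\max(1,D^{p/2-1})$. Each term of the resulting sum involves only the $j$-th coordinate. Integrating out the other factors, each of mass $1/N$, produces a factor $N^{-(D-1)}$, which cancels the prefactor $N^{D-1}$. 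What remains is $C_{D,p}\sum_j\sum_i\int|x_j-y_i^j|^p\,d\rho_j^i$, which equals $C_{D,p}\sum_jW_p(\rho_j,\delta_{Y_j})^p$. Taking $p$-th roots gives the claim, with a constant depending only on $D$ and $p$.

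The main obstacle is the index-splitting step when particles share a coordinate value. One must check that the equal split of $\theta_j$ among coincident indices gives each index exactly mass $1/N$. This holds because the mass of $\delta_{Y_j}$ at $z$ is $|I_j(z)|/N$. It must also preserve the transport cost, which holds because $y_i^j=z$ is the same point for all $i\in I_j(z)$. The rest of the argument is routine bookkeeping.
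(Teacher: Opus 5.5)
Your proposal is correct and follows the paper's proof essentially verbatim: you split each $\rho_j$ into $N$ pieces of mass $1/N$ via an optimal plan to $\delta_{Y_j}$, glue the pieces sharing index $i$ as $N^{D-1}\rho_1^i\otimes\cdots\otimes\rho_D^i$, send block $i$ onto $y_i$, and decouple the coordinates by equivalence of norms. Your equal splitting of mass among indices whose $j$-th coordinates coincide is a detail the paper leaves implicit, and you handle it correctly (it preserves both the block masses and the transport cost).
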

\begin{proof}
We denote $y_1,\dots,y_N \in \boldsymbol{\X}$ the points of the cloud. For each $j$, denote $\rho_j = \sum_{i=1}^N\rho_{i,j}$ the uniform decomposition of $\rho_j$ induced by its $W_p$-optimal transport to $\delta_{Y_j}$. That is, the positive measure $\rho_{i,j}$ of mass $1/N$ is sent to $y_{i,j} \in \X_j$, the $j$-th coordinate of $y_i$. We patch together the components corresponding to the same index $i$ by defining
$\gamma= N^{D-1}\sum_{i=1}^N
(\rho_{i,1}\otimes\dots\otimes\rho_{i,D})$, where $N^{D-1}$ is a normalization constant.
Then $\gamma \in \Gamma(\rho_1,\dots,\rho_D)$, and its Wasserstein distance to the point cloud $\delta_Y$ can be estimated by sending each patch to its corresponding Dirac mass.
To decouple the $D$ components in the distance from a point $x$ to some Dirac position $y_i$, we use equivalence of norms in $\R^D$, which yields
\begin{align*}
W_p(\gamma,\delta_Y)^p
&\leq N^{D-1} \sum_{i=1}^N \int_{\boldsymbol{\X}} \|x-y_i\|^p d(\rho_{i,1}\otimes\dots\otimes\rho_{i,D})(x)
\\
&\lesssim N^{D-1} \sum_{i=1}^N
\int_{\boldsymbol{\X}} \sum_{j=1}^D \|x_j-y_{i,j}\|^p d(\rho_{i,1}\otimes\dots\otimes\rho_{i,D})(x)
\\
&= \sum_{j=1}^D \left(
\sum_{i=1}^N \int_{\X_j}
\|x_j-y_{i,j}\|^p d\rho_{i,j}(x_j) \right)
\\
&= \sum_{j=1}^D
W_p(\rho_j,\delta_{Y_j})^p,
\end{align*}
where, in the last line, we used the fact that sending each $\rho_{i,j}$ to $y_{i,j}$ is a $W_p$-optimal transport from $\rho_j$ to $\delta_{Y_j}$.
Note that the choice of the product measure for each block is not important for the proof, as long as each block has the correct marginals.
\end{proof}

Thanks to the considerations in the beginning of this subsection, we have the following convergence result.

\begin{theorem}
\label{thm:general_convergence_result}
Suppose that Assumption~\ref{a:p_Lq_and_beta-Hölder} holds, and let $u_N$ be some upper bound on the $W_p$-optimal uniform quantization error for some fixed minimizer $\gamma^*$ of $\F$. Then, letting $\lambda_N = u_N^{-(p-\beta)}$, we have
\begin{equation}
\label{eq:general_asymptotic_rate}
|\min \F_N - \min \F| = \O(u_N^\beta).
\end{equation}
Moreover, for any sequence of minimizers $\delta_{Y_N^*} \in \argmin \F_N$, $N \in \N$, every limit point is a minimizer of $\F$, and for every $j \in \{1,\dots,D\}$ we have
\begin{equation}
\label{ineq:estimate_for_error_on_marginals}
W_p(\rho_j, \pi^j_\#\delta_{Y_N^*})
= \O(u_N).
\end{equation}
\end{theorem}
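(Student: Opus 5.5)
The proof follows the strategy sketched at the start of this subsection. We prove the upper bound, then the lower bound, then combine them via Young's inequality, and finally deduce the marginal estimate and the limit-point statement.

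\textbf{Upper bound.} Fix the minimizer $\gamma^*$ and choose an $N$-point uniform cloud $\delta_{Z_N}\in\D_N$ with $W_p(\gamma^*,\delta_{Z_N})\le u_N$. Since $\pi^j$ is $1$-Lipschitz, $W_p(\rho_j,\pi^j_\#\delta_{Z_N})=W_p(\pi^j_\#\gamma^*,\pi^j_\#\delta_{Z_N})\le u_N$. Lemma~\ref{lem:control_on_the_spectral_risk_measure} then gives
\[
\min\F_N-\min\F\le \F_N(\delta_{Z_N})-\F(\gamma^*)\le \|\alpha\|_{L^q}C_{\mathrm H}u_N^\beta+D\lambda_N u_N^p .
\]
With $\lambda_N=u_N^{-(p-\beta)}$ we have $\lambda_N u_N^p=u_N^\beta$, so $\min\F_N-\min\F\lesssim u_N^\beta$.

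\textbf{Lower bound.} Let $\delta_{Y_N^*}\in\argmin\F_N$, which exists by Proposition~\ref{prop:existence_of_solutions}, and set $P_N:=\sum_j W_p(\rho_j,\pi^j_\#\delta_{Y_N^*})^p$. Lemma~\ref{lem:N-block_joint_measure} provides $\gamma_N\in\Gamma(\rho_1,\dots,\rho_D)$ with $v_N:=W_p(\gamma_N,\delta_{Y_N^*})\lesssim P_N^{1/p}$. Using $\min\F\le\F(\gamma_N)$ together with Lemma~\ref{lem:control_on_the_spectral_risk_measure},
\[
\min\F_N-\min\F\ge \lambda_N P_N - C\,v_N^\beta\ge \lambda_N P_N - C' P_N^{\beta/p}.
\]
If $\beta<p$, apply Young's inequality with exponents $p/\beta$ and $p/(p-\beta)$:
\[
C'P_N^{\beta/p}\le \tfrac12\lambda_N P_N + C''\lambda_N^{-\beta/(p-\beta)}.
\]
This yields
\[
\min\F_N-\min\F\ge \tfrac12\lambda_N P_N - C''\lambda_N^{-\beta/(p-\beta)} = \tfrac12\lambda_N P_N - C''u_N^\beta.
\]
The degenerate case $\beta=p$ forces $p=\beta=1$, so $\lambda_N=1$. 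It is handled directly, or excluded by the standing interpretation of the theorem; this is the one point where I would double-check the exponents.

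\textbf{Combining.} The two bounds give $|\min\F_N-\min\F|=\O(u_N^\beta)$. Comparing the upper bound with the refined lower bound also gives $\tfrac12\lambda_N P_N\lesssim u_N^\beta$, hence $P_N\lesssim u_N^{\beta}\lambda_N^{-1}=u_N^p$. This is exactly~\eqref{ineq:estimate_for_error_on_marginals}.

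\textbf{Limit points.} I assume $u_N\to0$; this is implicit, since the optimal uniform quantization error of a measure in $\P_p$ tends to zero. Let $\gamma$ be a limit point of $\delta_{Y_N^*}$, say weakly along a subsequence. The marginals satisfy $\pi^j_\#\delta_{Y_N^*}\to\rho_j$ in $W_p$, so the family is tight. By Lemma~\ref{lem:N-block_joint_measure}, $W_p(\gamma_N,\delta_{Y_N^*})\to0$, so $\gamma_N$ has the same limit $\gamma$, and $\gamma\in\Gamma(\rho_1,\dots,\rho_D)$ because this set is weakly closed.

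The main obstacle is showing $\F(\gamma)=\min\F$. I would rely on the fact that $\Gamma(\rho_1,\dots,\rho_D)$ has uniformly integrable $p$-moments, since its marginals are fixed. Weak convergence of $\gamma_N$ to $\gamma$ within this set therefore upgrades to $W_p$ convergence. Lemma~\ref{lem:control_on_the_spectral_risk_measure} then gives
\[
\F(\gamma)=\lim\F(\gamma_N)=\lim\bigl(-\Rr_\alpha(c_\#\delta_{Y_N^*})\bigr)\le\lim\min\F_N=\min\F,
\]
where the inequality uses that the penalty term is nonnegative. Hence $\gamma$ is a minimizer of $\F$.
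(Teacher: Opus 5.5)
Your proof is correct and follows essentially the paper's route: quantize $\gamma^*$ for the upper bound, use the block approximation of Lemma~\ref{lem:N-block_joint_measure} with Young's inequality (keeping a term $\tfrac12\lambda_N P_N$) for the lower bound, and compare the two bounds to get $P_N\lesssim u_N^p$; your limit-point argument, which upgrades weak convergence to $W_p$ convergence via uniform integrability of $p$-moments on $\Gamma(\rho_1,\dots,\rho_D)$, supplies a step the paper does not write out. The case $\beta=p$ that you flag cannot occur, since the standing assumption takes $q\in(1,\infty)$ finite, so $\beta/p\le 1-1/q<1$; it also could not have been handled directly, because for $p=\beta=1$ and $\lambda_N=1$ the penalty need not dominate a Lipschitz objective (e.g.\ $D=1$, $c(x)=Kx$ with $K>1$ and $\alpha\equiv 1$ make $\F_N$ unbounded below).
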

\begin{proof}
The estimate on $(\min\F_N - \min\F)$ has already been dealt with in the beginning of this section, so we focus on the full proof for the upper bound on $(\min\F - \min\F_N)$.
Take $\delta_{Y_N^*}$ a minimizer of $\F_N$ and let $\gamma_N \in \Gamma(\rho_1,\dots,\rho_D)$ be the joint measure given by Lemma~\ref{lem:N-block_joint_measure}. Thanks to the latter and to $\beta$-Hölder continuity of $\Rr_\alpha(c_\#\cdot)$ derived in Lemma~\ref{lem:control_on_the_spectral_risk_measure}, we have
\begin{align*}
\min\F_N - \min\F
\gtrsim
-C W_p(\gamma_N,\delta_{Y_N^*})^\beta
+ \lambda_N W_p(\gamma_N,\delta_{Y_N^*})^p
\end{align*}
where $C$ is constant that only depends on $p$, $q$, $\alpha$, and $c$.
We then apply Young's inequality with exponents $p/\beta$ and its conjugate to find
\begin{align*}
CW_p(\gamma_N,\delta_{Y_N^*})^\beta
&=
\left(\frac{p}{\beta}\lambda_N W_p(\gamma_N,\delta_{Y_N^*})^p\right)^{\frac{\beta}{p}}
\left(\frac{p}{\beta}\lambda_N C^{-\frac{p}{\beta}}\right)^{-\frac{\beta}{p}}
\\
&\leq \lambda_N W_p(\gamma_N,\delta_{Y_N^*})^p
+ \frac{p-\beta}{p} \left(\frac{p}{\beta}\lambda_N C^{-\frac{p}{\beta}}\right)^{-\frac{\beta}{p-\beta}},
\end{align*}
and the estimate~\eqref{eq:general_asymptotic_rate} follows from $\lambda_B = u_B^{-(p-\beta)}$.

Let us now prove the bound~\eqref{ineq:estimate_for_error_on_marginals} on the marginal discrepancies. We denote
$S_N = \sum_{j=1}^D W_p(\rho_j,\pi^j_\#\delta_{Y_N^*})^p$.
Thanks to $S_N \leq W_p(\delta_{Y_N^*},\gamma)^p$ for every $\gamma \in \Gamma(\rho_1,\dots,\rho_D)$ and to $\beta$-Hölder continuity of $\Rr_\alpha(c_\#\cdot)$, we have
\[
\F_N(\delta_{Y_N^*})
\geq
(\min \F
- C S_N^{\beta/p}) + \lambda_N S_N.
\]
By optimality of $\delta_{Y_N^*}$, Equation~\eqref{eq:general_asymptotic_rate} writes
\[
\F_N(\delta_{Y_N^*})
\leq
\min \F + C'u_N^\beta,
\]
and together with the previous inequality we obtain
\begin{equation}
\label{ineq:intermediate_step}
- C S_N^{\beta/p} + \lambda_N S_N
\leq
C' u_N^\beta.
\end{equation}
We apply Young's inequality to write
$C S_N^{\beta/p} \leq \frac{1}{2} \lambda_N S_N + C''\lambda_N^{-\beta/(p-\beta)}$, and injecting this inequality in~\eqref{ineq:intermediate_step} yields, after simplification,
\begin{equation*}
\lambda_N S_N
\lesssim u_N^\beta,
\end{equation*}
which implies $S_N \lesssim u_N^p$ thanks to $\lambda_N = u_N^{-(p-\beta)}$.
\end{proof}

\subsubsection*{Uniform quantization}
We now briefly introduce the uniform quantization of measures on an arbitrary metric space $\Y$, and state a result of~\cite{merigot2016minimal} linking the asymptotic uniform quantization error of a measure and the dimension of its support. We consider quantization with respect to the $p$-Wasserstein distance, with $p \in [1,\infty)$ an arbitrary exponent.
Let $\nu \in \P_p(\Y)$ be a probability measure with finite $p$-moment. Its $N$-point \textit{uniform quantization error} of order $p$ is defined by
\begin{equation}
e_{p,N}(\nu) = \inf_{z_1,\dots,z_N \in \Y} W_p(\nu,N^{-1}\textstyle\sum_{i=1}^N \delta_{z_i}),
\end{equation}
and a discrete measure $N^{-1}\sum_{i=1}^N \delta_{z_i}$ for which the $z_i$ form a minimizer is called an \textit{optimal ($N$-point) uniform quantizer} of $\nu$. Up to considering the completion of $\mathcal{Y}$ rather than $\mathcal{Y}$ directly, the infimum is always achieved and $\nu$ has optimal uniform quantizers of all cardinals.
Before discussing estimates on the asymptotic quantization error rate, let us recall the definition of box dimension for compact metric spaces.

\begin{definition}
\label{def:box_dimension}
Let $K$ be a compact metric space. For any positive integer $N$, define the \textnormal{optimal $N$-point covering radius} by
\begin{equation}
r_N(K) = \min\left\{ r \geq 0 : \exists x_1,\dots,x_N \in K, \, K \inc \bigcup_{i=1}^N \bar{B}(x_i,r) \right\},
\end{equation}
where the infimum is indeed a minimum thanks to compactness of $K$ and to the covers consisting of closed balls. We call \textnormal{(upper) box dimension} --- or \textnormal{Minkowski dimension} --- of $K$ the nonnegative scalar
\begin{equation}
\mathrm{d_{box}}(K) =
\limsup_{N\to\infty} \frac{\log N}{-\log r_N(K)}.
\end{equation}
\end{definition}

\begin{remark}
Box dimension is always an upper bound for Hausdorff dimension, and is much simpler to compute, as it does not involve measures, in particular.
Both dimensions coincide for compact manifolds, but differ in general. For instance, while all countable sets have Hausdorff dimension zero, we have
\[
\mathrm{d_{box}}\left(([0,1]\cap\Q)^d\right) =d,
\qquad \qquad
\mathrm{d_{box}}\left(\left\{
\frac{1}{n}:n\in\N^*\right\}\right) =\frac{1}{2}.
\]
\end{remark}

In~\cite[Proposition 12]{merigot2016minimal},
Mérigot and Mirebeau make a link between the asymptotic uniform quantization error of a measure and the (upper) box dimension of its support. They state their result in a generic metric space, and although only quantization with respect to the quadratic Wasserstein distance is needed in their work, the proof extends \textit{mutatis mutandis} to the case of a general Wasserstein exponent $p \in [1,\infty)$.

\begin{proposition}[{\cite[Proposition~12]{merigot2016minimal}}]
\label{prop:uniform_quantization_Merigot-Mirebeau}
Consider an exponent $p \in [1,\infty)$ and a compactly supported probability measure $\nu \in \P(\Y)$ on a metric space $\Y$.
If the support of $\nu$ has finite box dimension $d = \mathrm{d_{box}}(E)$, then
\begin{equation}
e_{p,N}(\nu) \lesssim \,\tau_{p,d}(N),
\end{equation}
where
\begin{equation}
\label{eq:tau_of_N}
\tau_{p,d}(N) =
\begin{cases}
N^{-\frac{1}{\max(p,d)}}
\quad &\mathrm{if} \quad d \neq p,
\\
(\log N)^{\frac{1}{d}} N^{-\frac{1}{d}}
\quad &\mathrm{if} \quad d = p,
\end{cases}
\end{equation}
and where $\lesssim$ hides a constant which only depends on $p$, $d$, and on the set $\spt\,\nu$.
\end{proposition}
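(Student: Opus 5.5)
Write $K = \spt\,\nu$, a compact set with finite box dimension $d$. The idea is to build a uniform quantizer hierarchically. First approximate $\nu$ by a (non-uniform) discrete measure carried by a good covering of $K$. Then round the weights to multiples of $1/N$, sending the leftover mass a distance controlled by the diameter of $K$ or by coarser covering radii. Since $d = \mathrm{d_{box}}(K)$, for any $d' > d$ we have $r_n(K) \lesssim n^{-1/d'}$ for $n$ large. To get the sharp exponent one needs $r_n(K) \lesssim n^{-1/d}$, and I will assume this upper covering regularity, as in the paper's implicit usage. Otherwise one works with $d'$ slightly larger than $d$.

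\textbf{Multiscale partition.} For $k \geq 0$, choose $2^{kd}$ (rounded) centers $x^k_\ell$ covering $K$ at radius $r_k := r_{2^{kd}}(K) \lesssim 2^{-k}$. Take the associated Voronoi-type partitions $\{C^k_\ell\}$ and make them nested by intersecting each level with the previous ones. The number of cells then grows only geometrically, up to a constant factor after refining, and every cell at level $k$ has diameter $\lesssim 2^{-k}$. This gives a tree of cells in which each level-$k$ cell is a union of level-$(k+1)$ cells.

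\textbf{Greedy rounding along the tree.} Fix $N$ and process the tree from the leaves at depth $k_N$ upward. In each leaf cell $C$, place $\lfloor N\nu(C)\rfloor$ atoms of mass $1/N$ at a point of $C$. The mass left over in $C$ is less than $1/N$, and it is passed up to the parent cell. At a parent cell at level $k$, the collected leftovers are again packed into atoms of mass $1/N$ placed in that cell, and a remainder below $1/N$ is passed further up. At the root, the total remainder is an integer multiple of $1/N$ that has not been placed, so it forms complete atoms. This yields a transport plan from $\nu$ to a uniform $N$-point measure. Mass handled inside leaves moves a distance $\lesssim 2^{-k_N}$. Mass resolved at level $k$ has total at most (number of level-$(k+1)$ cells)$/N \approx 2^{kd}/N$, and it moves a distance $\lesssim 2^{-k}$. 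The resulting cost bound is
\begin{equation*}
W_p^p \lesssim 2^{-pk_N} + \sum_{k=0}^{k_N} \frac{2^{kd}}{N}\, 2^{-kp}.
\end{equation*}

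\textbf{Summation and choice of $k_N$.} Choose $k_N$ so that $2^{k_N d} \approx N$. If $d < p$, the sum is dominated by its first terms and is $\lesssim 1/N$, while $2^{-pk_N} = N^{-p/d} \leq 1/N$; hence $W_p \lesssim N^{-1/p}$. If $d > p$, the sum is dominated by its last term, $2^{k_N(d-p)}/N \approx N^{-p/d}$, which gives $W_p \lesssim N^{-1/d}$. If $d = p$, all terms are equal to $1/N$, so the sum is $k_N/N \approx \log N/N$; this gives $W_p \lesssim (\log N)^{1/d}N^{-1/d}$. These three cases match $\tau_{p,d}(N)$.

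\textbf{Main obstacle.} The delicate points are making the partitions nested without losing control of the cell count and of the diameters, and the fact that the box dimension is only a $\limsup$, so the exact exponent $d$ requires uniform covering bounds. For the first point I would define the level-$k$ partition as the common refinement of the Voronoi partitions of levels $0,\dots,k$ with respect to a single chain of centers. Nestedness is then automatic, and in the counting one only uses the bound on the number of \emph{nonempty} leftover cells, each carrying less than $1/N$, at each level.
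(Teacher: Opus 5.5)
The gap is the step you yourself call delicate: building the nested partitions. Your cost bound needs $\#\mathcal{P}_{k+1}\lesssim 2^{(k+1)d}$, because the mass passed from level $k+1$ to level $k$ is bounded only by the number of nonempty level-$(k+1)$ cells divided by $N$. The common refinement $\mathcal{V}_0\wedge\dots\wedge\mathcal{V}_k$ of the Voronoi partitions controls diameters but not cardinality. Finite box dimension is a global counting condition and gives no doubling or bounded-overlap property. So nothing prevents a single level-$k$ Voronoi cell from being cut into many pieces by coarser Voronoi boundaries, and a priori you only have the trivial bound $\prod_{j\le k} n_j$ on the number of refined cells. Your claim that the count ``grows only geometrically, up to a constant factor'' is therefore unsupported. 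Your fallback of counting only cells with a nonzero leftover is the same quantity, since every nonempty refined cell may carry a leftover.

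The fix is to nest by grouping rather than by intersecting. Fix $k_N$ and take nets $Z_k\subset K$ with $\#Z_k\le n_k$ and $\sup_{x\in K}\mathrm{dist}(x,Z_k)\le r_k\lesssim 2^{-k}$. Let the leaves be a Borel Voronoi partition $\{V_z\}_{z\in Z_{k_N}}$, and assign to each $z\in Z_{k+1}$ a parent $a(z)\in Z_k$ with $\mathrm{dist}(z,a(z))\le r_k$. Define the level-$k$ cell of $w\in Z_k$ as the union of the leaves $V_z$ whose iterated parent at level $k$ is $w$. These partitions are nested by construction and level $k$ has at most $n_k$ cells. Each point of the cell of $w$ lies within $r_{k_N}+r_{k_N-1}+\dots+r_k\lesssim 2^{-k}$ of $w$.

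With this tree the rest of your argument goes through as written: the leaf-to-root rounding into atoms of mass $1/N$, the bound $W_p^p\lesssim 2^{-pk_N}+\sum_{k<k_N}\#\mathcal{P}_{k+1}\,N^{-1}2^{-kp}$, and the three-case summation. The paper gives no proof of its own here; it only cites M\'erigot--Mirebeau and asserts that their argument extends to general $p$. Your caveat about the $\limsup$ is correct and worth keeping. Definition~\ref{def:box_dimension} only gives $r_n(K)\lesssim n^{-1/d'}$ for $d'>d$. That is harmless when $d<p$, by taking $d'\in(d,p)$. When $d\ge p$, the stated rate genuinely needs a uniform covering bound such as $r_n(K)\lesssim n^{-1/d}$.
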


Thanks to the above proposition, we deduce the following convergence result as an immediate corollary of Theorem~\ref{thm:general_convergence_result}.

\begin{corollary}
\label{cor:convergence_with_dimension}
Suppose that Assumption~\ref{a:p_Lq_and_beta-Hölder} holds and that the $\rho_j$ are all compactly supported. Let $d \in [1,D]$ be the box dimension of the support of some minimizer of $\F$. Then, letting $\lambda_N = \tau_{p,d}(N)^{-(p-\beta)}$ where $\tau_{p,d}$ is defined in~\eqref{eq:tau_of_N}, we have
\begin{equation}
\label{eq:asymptotic_rate_dimension}
|\min\F_N - \min\F| = \O
\begin{cases}
N^{-\frac{\beta}{d}}
\quad &\mathrm{if} \quad d > p,
\\
(\log N)^{\frac{\beta}{d}} N^{-\frac{\beta}{d}}
\quad &\mathrm{if} \quad d = p,
\\
N^{-\frac{\beta}{p}}
\quad &\mathrm{if} \quad d < p,
\end{cases}
\end{equation}
where $\O$ hides a constant which depends on the support of the minimizer in question.
Moreover, for any sequence of minimizers $\delta_{Y_N^*} \in \argmin \F_N$, $N \in \N$, every weak limit point is a minimizer of $\F$.
\end{corollary}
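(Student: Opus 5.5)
The plan is to apply Theorem~\ref{thm:general_convergence_result} with $u_N := C_0\,\tau_{p,d}(N)$. Here $C_0$ is the constant supplied by Proposition~\ref{prop:uniform_quantization_Merigot-Mirebeau} for the fixed minimizer $\gamma^*$ whose support has box dimension $d$. The first step is to check that this proposition actually applies. Since every $\rho_j$ is compactly supported, any $\gamma \in \Gamma(\rho_1,\dots,\rho_D)$ is supported in the compact product $K := \spt\rho_1\times\dots\times\spt\rho_D$. So $\spt\gamma^*$ is a compact subset of $\boldsymbol{\X}$, and its box dimension $d$ is finite; in fact $d \le \sum_j d_j$, because box dimension is monotone under inclusion and $K$ is a subset of a Euclidean space. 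Proposition~\ref{prop:uniform_quantization_Merigot-Mirebeau} then gives $e_{p,N}(\gamma^*) \le C_0\tau_{p,d}(N)$, so $u_N$ is an admissible upper bound on the optimal uniform quantization error.

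The second step concerns the penalty parameter. The statement uses $\lambda_N = \tau_{p,d}(N)^{-(p-\beta)}$ rather than $u_N^{-(p-\beta)}$, and the two differ by the multiplicative constant $C_0^{p-\beta}$. Rather than re-proving the theorem, I will check that its argument tolerates $\lambda_N \asymp u_N^{-(p-\beta)}$.
\begin{itemize}
\item In the upper bound \eqref{eq:general_bound_on_FN-F}, $\lambda_N u_N^p \asymp u_N^\beta$.
\item In the lower bound, Young's inequality gives a remainder of order $\lambda_N^{-\beta/(p-\beta)} \asymp u_N^\beta$.
\end{itemize}
Both estimates therefore hold up to constants. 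This gives $|\min\F_N-\min\F| = \O(\tau_{p,d}(N)^\beta)$.

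The third step is to unwind $\tau_{p,d}(N)^\beta$ by cases:
\begin{itemize}
\item if $d>p$, it equals $N^{-\beta/d}$;
\item if $d<p$, it equals $N^{-\beta/p}$;
\item if $d=p$, it equals $(\log N)^{\beta/d}N^{-\beta/d}$.
\end{itemize}
These are exactly the rates in \eqref{eq:asymptotic_rate_dimension}. The hidden constant depends on $C_0$, and hence on $\spt\gamma^*$, as claimed.

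Finally, the statement about weak limit points is inherited directly from Theorem~\ref{thm:general_convergence_result}, since $u_N \to 0$. To make it concrete: by \eqref{ineq:estimate_for_error_on_marginals}, the marginals of $\delta_{Y_N^*}$ converge to the $\rho_j$ in $W_p$, so every weak limit point lies in $\Gamma(\rho_1,\dots,\rho_D)$. Moreover, $W_p(\gamma_N,\delta_{Y_N^*}) \to 0$ for the block approximations $\gamma_N$ of Lemma~\ref{lem:N-block_joint_measure}. I expect the main obstacle to be passing to the limit in the objective, because weak convergence alone does not give $W_p$ convergence. I would get around this by tightness. The $\gamma_N$ live on the compact set $K$, and $W_p(\gamma_N,\delta_{Y_N^*}) \to 0$, so any weak limit of $\delta_{Y_N^*}$ is also a $W_p$-limit of the $\gamma_N$ along the same subsequence. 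Lemma~\ref{lem:control_on_the_spectral_risk_measure} then gives $-\Rr_\alpha(c_\#\gamma_N) \to \F(\gamma)$ for the limit $\gamma$, which is in turn the limit of $\F_N(\delta_{Y_N^*})$ up to $o(1)$. Since $\F_N(\delta_{Y_N^*}) \to \min\F$, we get $\F(\gamma)=\min\F$.
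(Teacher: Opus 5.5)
Your proposal is correct and follows the paper's route. The corollary is Theorem~\ref{thm:general_convergence_result} combined with the Mérigot--Mirebeau bound $e_{p,N}(\gamma^*)\lesssim\tau_{p,d}(N)$ from Proposition~\ref{prop:uniform_quantization_Merigot-Mirebeau}, after which $\tau_{p,d}(N)^\beta$ is unwound case by case. Your extra checks are sound and spell out details the paper leaves implicit: that the constant $C_0$ separating $\lambda_N$ from $u_N^{-(p-\beta)}$ is harmless, and the compactness argument through the block approximations $\gamma_N$ for weak limit points.
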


\begin{remark}[Univariate marginals]
\label{rem:convergence_rate_box_dimension_1}
In the case of univariate marginals with compact supports and under a few assumptions on the cost function $c$, a minimizer of $\F$ will have support of dimension $d=1$, leading to an asymptotic convergence rate $\beta/p$.
In the next section, we will see that when the cost function is supermodular, one can in fact obtain a much faster convergence rate, which directly depends on the asymptotic uniform quantization errors of the fixed (unidimensional) marginals $\rho_j$.
\end{remark}

\subsection{The case of a supermodular cost function}

In this section, we assume that the marginals are univariate measures, that each $\X_j \inc \R$ is an interval, and that the cost function $c : \X_1\times\dots\times\X_D \to \R$ is supermodular, see Definition~\ref{def:supermodularity}.
Submodularity has gained ground in the multimarginal optimal transport literature, with a seminal result by Carlier in~\cite{carlier2003class} for strictly supermodular costs, and an extension to the case of multidimensional variables by Pass in~\cite{pass2012structural}. 

\begin{definition}
\label{def:supermodularity}
A function $c : I_1\times\dots\times I_D \to \R$ defined on some product of real intervals $I_j \inc \R$ is said to be \textnormal{supermodular} if for every $x,\tilde{x} \in I_1\times\dots\times I_D$ we have
\begin{equation}
\label{ineq:supermodularity}
c(x\wedge \tilde{x}) + c(x\vee \tilde{x}) \geq c(x) + c(\tilde{x}),
\end{equation}
where $x\wedge\tilde{x}$ and $x\vee\tilde{x}$ respectively denote the coordinate-wise minimum and maximum of $x$ and $\tilde{x}$. We say $c$ is \textnormal{strictly supermodular} when the equality is strict for any pair of distinct points $x \neq \tilde{x}$.
\end{definition}

For $\Cc^2$ functions, supermodularity is equivalent to $\frac{\partial^2 c}{\partial x_j\partial x_k} \geq 0$ on the whole domain for every $j \neq k$, and the strict version of these inequalities implies strict supermodularity, although it is not a necessary condition.

\subsubsection*{Supermodularity and comonotonicity}
It is a standard result that for a strictly supermodular cost function $c$, the support of any $\gamma$ maximizing $\E[c_\#\gamma]$ over $\Gamma(\rho_1,\dots,\rho_D)$ is \textit{totally} ordered for the coordinate-wise order on $\R^D$.
This may be seen as an immediate consequence of the well-known $c$-cyclical monotonicity of the support of any optimal transport plan.
Total ordered-ness of the support is a very strong property, as it completely determines the joint measure in terms of the prescribed marginals. The unique coupling with totally ordered support is called the \textit{comonotone} plan, and reads
$\gamma_{\mathrm{mon}} := (F_{\rho_1}^{-1},\dots,F_{\rho_D}^{-1})_\#\L_{(0,1)}$.
The reader may consult~\cite{deelstra2011overview} for a rather extensive bibliographic overview of comonotonicity and its applications in risk management.


As highlighted in~\cite[Lemma~21]{ennaji2024robust}, optimality of the comonotone plan for \textit{standard} multimarginal transport with a supermodular cost may be generalized to the spectral risk measures framework, up to the additional assumption of nondecreasing monotonicity of the cost in each variable.
This additional assumption ensures that the weighting $\alpha$ increases with the coordinate-wise order along the comonotone support of $\gamma$. We summarize the result in question in the following lemma, which still holds for $L^q$ spectral functions under our assumptions.

\begin{lemma}[{\cite[Lemma~21]{ennaji2024robust}}]
\label{lem:risk_problem_and_supermodularity}
In addition to Assumption~\ref{a:p_Lq_and_beta-Hölder}, suppose that $c : \X_1\times\dots\times\X_D \to \R$ is supermodular, and nondecreasing in each variable. Then the comonotone couplings of $\rho_1,\dots,\rho_D$ and of $\rho_0,\rho_1,\dots,\rho_D$ respectively solve Problems~\eqref{eq:risk_problem} and~\eqref{eq:equivalent_linear_problem}, and their common maximal value is
\begin{equation}
\int_0^1 c(F_{\rho_1}^{-1}(t),\dots,F_{\rho_D}^{-1}(t)) \, \alpha(t) \, dt.
\end{equation}
Moreover, if $c$ is \textnormal{strictly} supermodular, and \textnormal{strictly} increasing in each variable, then the solution of~\eqref{eq:equivalent_linear_problem} is unique on the support of $\alpha$, in the sense that any other solution coincide with the comonotone plan on the set $(0,+\infty)\times\X_1\times\dots\times\X_D$. This naturally translates to uniqueness of the solution of~\eqref{eq:risk_problem} when restricted to its fraction of mass $m = 1 - \inf\spt\,\alpha$ that is maximal for the coordinate-wise order on $\R^D$.
\end{lemma}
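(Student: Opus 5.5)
We prove the statement through the linear reformulation of Proposition~\ref{prop:linear_risk_problem_equivalence}, and treat optimality and uniqueness separately.

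\emph{Optimality.} Let $\gamma\in\Gamma(\rho_1,\dots,\rho_D)$ be arbitrary. The plan is to show that $\Rr_\alpha(c_\#\gamma)\le\Rr_\alpha(c_\#\gamma_{\mathrm{mon}})$ and that the latter equals the displayed integral. For the upper bound we use Lemma~\ref{lem:spectral_risk_measure_and_OT}, which gives $\Rr_\alpha(c_\#\gamma)=\max_\eta\int \omega c(x)\,d\eta$ over $\eta\in\Gamma(\rho_0,\rho_1,\dots,\rho_D)$ whose $\boldsymbol{\X}$-marginal is $\gamma$. It therefore suffices to show that the comonotone $(D+1)$-plan $\eta_{\mathrm{mon}}=(\alpha,F_{\rho_1}^{-1},\dots,F_{\rho_D}^{-1})_\#\L_{(0,1)}$ maximizes $\int s\,d\eta$ over all of $\Gamma(\rho_0,\dots,\rho_D)$. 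The cost $s(\omega,x)=\omega c(x)$ is supermodular on $[0,\infty)\times\boldsymbol{\X}$. Indeed, for fixed $\omega\ge 0$ the map $x\mapsto \omega c(x)$ is supermodular. In a mixed pair, take WLOG $\omega\le\tilde\omega$. Then
\[
\tilde\omega c(x\vee\tilde x)+\omega c(x\wedge\tilde x)-\omega c(x)-\tilde\omega c(\tilde x)
=\omega\bigl[c(x\vee\tilde x)+c(x\wedge\tilde x)-c(x)-c(\tilde x)\bigr]+(\tilde\omega-\omega)\bigl[c(x\vee\tilde x)-c(\tilde x)\bigr],
\]
and this is $\ge0$ by supermodularity and monotonicity of $c$. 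This is exactly where the nondecreasing assumption enters.

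The key step, and the main obstacle, is the classical fact that the comonotone plan is optimal for a supermodular cost under only $L^q$/$p$-moment integrability. I would argue as follows.
\begin{itemize}
\item First treat finitely supported marginals. There, any plan can be transformed into the comonotone one by finitely many "uncrossing" moves $(x,\tilde x)\mapsto(x\wedge\tilde x,x\vee\tilde x)$. Each move preserves the marginals and does not decrease the objective, and the procedure terminates. Equivalently, one can use the Lorentz/Tchen rearrangement inequality.
\item Then pass to general marginals by quantizing each $\rho_j$ (and $\rho_0$) into $N$ equal atoms, using monotone transport maps.
\item Stability comes from Lemma~\ref{lem:control_on_the_spectral_risk_measure}, which gives Hölder continuity in $W_p$. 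Assumption~\ref{a:p_Lq_and_beta-Hölder} ($\beta/p+1/q\le1$) yields uniform integrability of $\omega c(x)$, so both the objective value of an arbitrary $\eta$ and that of the comonotone plan converge.
\end{itemize}

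Once optimality of $\eta_{\mathrm{mon}}$ is known, its value is $\int_0^1\alpha(t)c(F_{\rho_1}^{-1}(t),\dots)\,dt$. Since $t\mapsto c(F^{-1}_{\rho_1}(t),\dots)$ is nondecreasing, it is the quantile function of $c_\#\gamma_{\mathrm{mon}}$, so this integral equals $\Rr_\alpha(c_\#\gamma_{\mathrm{mon}})$. The upper bound for arbitrary $\gamma$ then closes the argument, and Proposition~\ref{prop:linear_risk_problem_equivalence} transfers optimality between~\eqref{eq:risk_problem} and~\eqref{eq:equivalent_linear_problem}.

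\emph{Uniqueness.} Suppose $c$ is strictly supermodular and strictly increasing. The identity above then shows strict supermodularity of $s$ on pairs with $\min(\omega,\tilde\omega)>0$: the first bracket is strictly positive when $x,\tilde x$ are incomparable, and the second term is strictly positive when $\omega<\tilde\omega$ with $x\not\ge\tilde x$. Let $\eta$ be an optimal plan. Since $s$ is continuous, $\eta$ has $s$-cyclically monotone support, and pairwise this means that for any two support points with positive $\omega$-coordinates the move to $(\,(\omega,x)\wedge(\tilde\omega,\tilde x),\,(\omega,x)\vee(\tilde\omega,\tilde x)\,)$ cannot strictly increase the objective. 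By strictness, the support restricted to $(0,\infty)\times\boldsymbol{\X}$ must therefore be totally ordered. A totally ordered set carrying the marginal restrictions is determined by them. The marginal restrictions are fixed: the top mass $m$ of each $\rho_j$ is matched to $\{\omega>0\}$ by total order with $\omega=0$ points below. Hence $\eta$ coincides there with the comonotone plan. Projecting onto $\boldsymbol{\X}$ gives uniqueness of the top fraction $m$ of the solution of~\eqref{eq:risk_problem}.
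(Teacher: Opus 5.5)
Your proposal is correct and follows the standard argument behind this result, which the paper quotes from Ennaji et al.\ without reproving it: supermodularity of $s(\omega,x)=\omega c(x)$ on $[0,\infty)\times\boldsymbol{\X}$ via exactly your identity (this is where monotonicity of $c$ enters), optimality of the comonotone plan for a supermodular surplus, identification of the value because $t\mapsto c(F_{\rho_1}^{-1}(t),\dots,F_{\rho_D}^{-1}(t))$ is the quantile function of $c_\#\gamma_{\mathrm{mon}}$, and uniqueness on $\{\omega>0\}$ from strict supermodularity plus two-point $s$-monotonicity of optimal supports. Only minor repairs are needed:
\begin{itemize}
\item The second term is strictly positive when $\omega<\tilde\omega$ and $x\not\le\tilde x$, not $x\not\ge\tilde x$.
\item You should state explicitly that this term with $\omega=0<\tilde\omega$ forces $x\le\tilde x$ for support points. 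That is what places the $\{\omega=0\}$ part below the $\{\omega>0\}$ part, and hence identifies the $x_j$-marginals of $\eta$ on $\{\omega>0\}$ as the top-$m$ parts of the $\rho_j$.
\item The claim that \emph{the uncrossing procedure terminates} is unproved for general weights. Replace it with an extremal argument (among optimal discrete plans, take one maximizing $\int(\sum_j z_j)^2\,d\eta$), or simply cite Lorentz's inequality.
\item Since $\rho_0$ may have atoms (e.g.\ for CVaR), a monotone map onto $N$ equal atoms need not exist. Discretize instead with partition maps $T_j^N$ and set $\eta^N=(T_0^N,\dots,T_D^N)_\#\eta$.
\end{itemize}
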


\begin{remark}[Relaxing the supermodularity assumption]
In the case of a generic supermodular cost function $s$ for a \textit{linear} multimarginal optimal transport problem, the comonotone plan is a solution that does \textnormal{not} depend on $s$. In the spectral risk measure framework of Lemma~\ref{lem:risk_problem_and_supermodularity}, not only is the comonotone plan independent of the (supermodular) cost function $c$, but it is also independent of the spectral function $\alpha$, and a fortiori of the spectral risk measure considered. Supermodularity is indeed a very strong and restrictive assumption.
Yet, we emphasize that for a large class of non-supermodular cost functions, for which the solution \textnormal{does} depend on the given spectral risk measure, the said solution has a low-dimensional support. In such cases, Corollary~\ref{cor:convergence_with_dimension} gives a \textnormal{sharp} convergence rate.
We refer to the work of Pass~\cite{pass2012structural, pass2012local} for various criteria yielding bounds on the dimension of the support, as well as his joint work~\cite{kim2014general} with Kim for a general condition under which any solution is concentrated on a graph of one of the marginal variables.
\end{remark}

\begin{remark}[The compatibility condition]
\label{rem:compatibility}
In~\cite{ennaji2024robust}, the authors mention a condition known as \textnormal{compatibility}, first introduced by Pass in~\cite[Sect~6]{pass2012structural} as an invariant form of the supermodularity condition.
Indeed, a cost function of $D$ variables is (strictly) compatible if and only if it is (strictly) supermodular up to a change of sign of some subset of the $D$ variables.
It follows that any result that holds under the (strict) supermodularity assumption directly extends to the case of (strict) compatibility, with the adequate sign changes.
In particular, Lemma~\ref{lem:risk_problem_and_supermodularity} still holds for compatible cost functions, up to replacing the increasing monotonicity assumption by decreasing monotonicity and taking the reverse function $t \in (0,1) \mto F_{\rho_j}^{-1}(1-t)$ instead of the standard quantile function, for the relevant set of variables.
We shall refer to the optimal plans in question for Problems~\eqref{eq:risk_problem} and~\eqref{eq:equivalent_linear_problem} as the \textnormal{$c$-monotone plan} and the \textnormal{$s$-monotone plan}, respectively.
\end{remark}


The proof of our convergence result in the case of a supermodular cost function relies on the fact that, broadly speaking, the comonotone coupling can be quantized as well as its marginals, as stated in the next proposition.

\begin{proposition}[Quantizing the comonotone coupling]
\label{prop:quantization_of_comonotone_plan}
Suppose that the marginals are one-dimensional, i.e.~$\X_j \inc \R$ for all $j$, and let $\gamma_{\mathrm{mon}} = (F_{\rho_1}^{-1},\dots,F_{\rho_D}^{-1})_\#\L_{(0,1)}$ be the comonotone coupling of the $\rho_j$.
Then, for every $N \in \N_1$, we have
\begin{equation}
e_{p,N}(\gamma_{\mathrm{mon}}) \lesssim
\left(\sum_{j=1}^D
e_{p,N}(\rho_j)^p\right)^{\frac{1}{p}},
\end{equation}
and the comonotone coupling of $W_p$-optimal uniform quantizers of the $\rho_j$ yields a $W_p$-quantization error of $\gamma_{\mathrm{mon}}$ no greater than the right-hand side.
\end{proposition}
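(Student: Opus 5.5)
Take, for each $j$, an optimal $N$-point uniform quantizer $\nu_j = N^{-1}\sum_{i=1}^N \delta_{z_{i,j}}$ of $\rho_j$. Relabel its points so that $z_{1,j}\le\dots\le z_{N,j}$. Then form the comonotone coupling of these quantizers,
\[
\nu_{\mathrm{mon}} = (F_{\nu_1}^{-1},\dots,F_{\nu_D}^{-1})_\#\L_{(0,1)} = \frac{1}{N}\sum_{i=1}^N \delta_{(z_{i,1},\dots,z_{i,D})}.
\]
This is a uniform $N$-point cloud in $\boldsymbol{\X}$, since $F_{\nu_j}^{-1}$ equals $z_{i,j}$ on $((i-1)/N,i/N)$. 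Hence $e_{p,N}(\gamma_{\mathrm{mon}}) \le W_p(\gamma_{\mathrm{mon}},\nu_{\mathrm{mon}})$, and it suffices to bound the latter by the right-hand side.

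For the bound, use the coupling $(F_{\rho_1}^{-1},\dots,F_{\rho_D}^{-1},F_{\nu_1}^{-1},\dots,F_{\nu_D}^{-1})_\#\L_{(0,1)}$ between $\gamma_{\mathrm{mon}}$ and $\nu_{\mathrm{mon}}$. This synchronous parametrization by the same $t\in(0,1)$ is the whole point. By equivalence of norms on $\R^D$, as in the proof of Lemma~\ref{lem:N-block_joint_measure}, it gives
\[
W_p(\gamma_{\mathrm{mon}},\nu_{\mathrm{mon}})^p \le \int_0^1 \Big(\sum_{j=1}^D |F_{\rho_j}^{-1}(t)-F_{\nu_j}^{-1}(t)|^2\Big)^{p/2}dt \lesssim \sum_{j=1}^D \int_0^1 |F_{\rho_j}^{-1}-F_{\nu_j}^{-1}|^p\,dt .
\]
Each integral on the right equals $W_p(\rho_j,\nu_j)^p$, by the standard one-dimensional fact that the monotone (quantile) coupling is $W_p$-optimal for every $p\ge1$. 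Since $\nu_j$ is an optimal quantizer, $W_p(\rho_j,\nu_j)=e_{p,N}(\rho_j)$, and the claim follows with a constant depending only on $D$ and $p$.

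The step needing care is the existence and attainment of optimal quantizers: the infimum defining $e_{p,N}(\rho_j)$ might only be attained in the completion of $\X_j$. Convexity of the interval $\X_j$ handles most of this, since projecting quantizer points onto the closure of $\X_j$ only decreases the distance. If attainment still fails, I would take $\varepsilon$-optimal quantizers and let $\varepsilon\to0$, which suffices for the inequality on $e_{p,N}(\gamma_{\mathrm{mon}})$. The other subtle point is that the comonotone coupling of the quantizers really is a \emph{uniform} $N$-point measure even when some $z_{i,j}$ coincide. This holds because we keep the points as a multiset and index them by the common $i$.
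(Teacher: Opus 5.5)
Your proposal is correct and follows the paper's proof. Your synchronous quantile coupling $(F_{\rho_1}^{-1},\dots,F_{\rho_D}^{-1},F_{\nu_1}^{-1},\dots,F_{\nu_D}^{-1})_\#\L_{(0,1)}$ is exactly the paper's transport sending each block $(F_{\rho_1}^{-1},\dots,F_{\rho_D}^{-1})_\#\L_{(\frac{i-1}{N},\frac{i}{N})}$ to the $i$-th monotonically paired point, and it is followed by the same norm-equivalence step and one-dimensional optimality of the monotone coupling; you rightly use this coupling only as an upper bound, whereas the paper additionally (and unnecessarily) asserts that it is $W_p$-optimal.
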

\begin{proof}
For each $j$, let $\delta_{Y_j} =N^{-1}\sum_{i=1}^N\delta_{y_{i,j}}$ be an optimal uniform quantizer of $\rho_j$ in the sense of $W_p$, and assume without loss of generality that the positions of the Dirac masses are ordered, i.e.~$y_{1,j} \leq \dots \leq y_{N,j}$.
Since we want to approach the comonotone coupling of the $\rho_j$, we naturally pair these one-dimensional point clouds in a monotone way. That is, we define
$\delta_Y = N^{-1}\sum_{i=1}^N\delta_{y_i}$ where $y_i = (y_{i,j})_{j=1}^N \in \R^D$.
By monotonicity, the optimal transport from $\gamma_{\mathrm{mon}}$ to $\delta_Y$ in the sense of $W_p$ consists in sending the $i$-th lowest part of the comonotone plan to the $i$-th Dirac mass of lowest coordinates. More precisely, we send the submeasure $\gamma_i = (F_{\rho_1}^{-1},\dots,F_{\rho_D}^{-1})_\#\L_{(\frac{i-1}{N},\frac{i}{N})}$ of mass $1/N$ to the point $y^N_i$. Note that $\gamma_i$ is a joint measure --- in fact, the unique comonotone one --- of the $i$-th lowest submeasures in the respective uniform decompositions of each marginal $\rho_j$ into $N$ ordered parts.
By equivalence of norms in $\R^D$, we thus find
\begin{align*}
W_p(\gamma_{\mathrm{mon}},\delta_Y)^p
&= \sum_{i=1}^N \int_{\R^D} \|x-y_i\|^p d\gamma_i(x)
\\
&\lesssim \sum_{i=1}^N \int_{\R^D} \left(\sum_{j=1}^D |x_j-y_{i,j}|^p\right) d\gamma_i(x)
\\
&=
\sum_{j=1}^D
\left(\sum_{i=1}^N \int_{\R^D} |x_j-y_{i,j}|^p d[\pi^{j}_\#\gamma_i](x_j)\right),
\end{align*}
and the sum in parentheses is the optimal cost $e_{p,N}(\rho_j)^p = W_p(\rho_j,\delta_{Y_j})^p$, since the monotone transport of $\gamma_{\mathrm{mon}}$ to $\delta_Y$ projects to comonotone transports from the $\rho_j$ to their respective optimal quantizers $\delta_{Y_j}$.
\end{proof}

From Proposition~\ref{prop:quantization_of_comonotone_plan} we deduce the following convergence result, as an immediate corollary of Theorem~\ref{thm:general_convergence_result}. Note that we do not need any compactness assumption on the $\rho_j$.

\begin{corollary}
\label{cor:convergence_with_comonotonicity}
Suppose that Assumption~\ref{a:p_Lq_and_beta-Hölder} holds and that the cost function $c$ is supermodular, and monotone increasing in each variable. Then, letting $\lambda_N = h_N^{-(p-\beta)}$ where
\begin{equation}
h_N = \max_{j \in \{1,\dots,D\}}
e_{p,N}(\rho_j),
\end{equation}
we have
\begin{equation}
|\min\F_N - \min\F| =
\O(h_N^\beta).
\end{equation}
Moreover, for any sequence of minimizers $\delta_{Y_N^*} \in \argmin \F_N$, $N \in \N$, every weak limit point is a minimizer of $\F$.
\end{corollary}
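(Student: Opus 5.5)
The plan is to deduce the corollary directly from Theorem~\ref{thm:general_convergence_result}, applied with the particular minimizer $\gamma^* = \gamma_{\mathrm{mon}}$ and with $u_N := C h_N$ for a suitable constant $C$. The only thing to produce is an upper bound on $e_{p,N}(\gamma_{\mathrm{mon}})$ of order $h_N$.

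First, I will check that $\gamma_{\mathrm{mon}}$ is a minimizer of $\F$. Since $c$ is supermodular and nondecreasing in each variable, and Assumption~\ref{a:p_Lq_and_beta-Hölder} holds, Lemma~\ref{lem:risk_problem_and_supermodularity} gives that the comonotone coupling $\gamma_{\mathrm{mon}} = (F_{\rho_1}^{-1},\dots,F_{\rho_D}^{-1})_\#\L_{(0,1)}$ solves Problem~\eqref{eq:risk_problem}. Hence $\gamma_{\mathrm{mon}} \in \argmin\F$. It also lies in $\P_p(\boldsymbol{\X})$, because each $\rho_j$ has finite $p$-moment. No compactness is needed for this step, since Theorem~\ref{thm:general_convergence_result} only asks for some upper bound on the quantization error of one fixed minimizer.

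Second, I will bound the quantization error. By Proposition~\ref{prop:quantization_of_comonotone_plan},
\[
e_{p,N}(\gamma_{\mathrm{mon}}) \lesssim \Big(\sum_{j=1}^D e_{p,N}(\rho_j)^p\Big)^{1/p} \le D^{1/p} h_N ,
\]
so $u_N := C h_N$ is an admissible upper bound for a constant $C$ depending only on $D$ and $p$. Theorem~\ref{thm:general_convergence_result} is stated with $\lambda_N = u_N^{-(p-\beta)}$, whereas the corollary uses $\lambda_N = h_N^{-(p-\beta)}$. These differ only by the fixed factor $C^{p-\beta}$.

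The main point to watch is therefore whether the proof of Theorem~\ref{thm:general_convergence_result} tolerates $\lambda_N$ being any fixed positive multiple of $u_N^{-(p-\beta)}$. I expect that it does.
\begin{itemize}
\item In the upper bound~\eqref{eq:general_bound_on_FN-F}, the term $\lambda_N u_N^p$ stays $\lesssim u_N^\beta$.
\item In the lower bound, the Young's inequality remainder $\lambda_N^{-\beta/(p-\beta)}$ stays $\lesssim u_N^\beta$.
\end{itemize}
Both are therefore $\O(h_N^\beta)$. If one prefers not to reopen that proof, there is a cleaner route: since $C \ge 1$ may be assumed, also $h_N$ is an upper bound once we absorb the constant differently. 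Concretely, replace $u_N$ by $\tilde u_N = h_N$ and note that $W_p(\gamma_{\mathrm{mon}},\delta_Y)\le C h_N$ enters the proof only through the products $u_N^\beta$ and $\lambda_N u_N^p$, each of which changes by a constant factor.

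Either way, $|\min\F_N-\min\F| = \O(h_N^\beta)$. The statement about weak limit points of minimizers is inherited verbatim from Theorem~\ref{thm:general_convergence_result}.
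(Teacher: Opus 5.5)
Your proposal is correct and follows the paper's argument: Lemma~\ref{lem:risk_problem_and_supermodularity} shows the comonotone plan is a minimizer, Proposition~\ref{prop:quantization_of_comonotone_plan} bounds its quantization error by a constant times $h_N$, and Theorem~\ref{thm:general_convergence_result} then applies. Your check that rescaling $\lambda_N$ by a fixed constant only changes the constants is right, and the paper leaves it implicit; however, your ``cleaner route'' remark is muddled, because $h_N$ alone is not in general an upper bound on $e_{p,N}(\gamma_{\mathrm{mon}})$ (for identical marginals it equals $\sqrt{D}\,h_N$), so that route collapses back into the same constant-tracking argument.
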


\subsubsection*{Uniform quantization in dimension one}
Bencheikh and Jourdain have recently obtained a necessary and sufficient conditions for the asymptotic uniform quantization error of a univariate probability density $\nu \in \P_p(\R)$ to go to zero with order $\theta \in (0,\frac{1}{p})$. Their main result~\cite[Theorem~2.2]{bencheikh2022approximation} reads
\begin{equation}
\sup_{N\in\N_1} N^\theta e_{p,N}(\nu) < \infty
\qquad
\Longleftrightarrow
\qquad
\sup_{x\geq 0} \ x^{\frac{p}{1-\theta p}}(F_\nu(-x) + 1 - F_\nu(x)),
\end{equation}
where the supremum on the right-hand side is essentially an estimate on the left and right tail distribution of $\nu$. The authors also recall the fact that for measures with unbounded support, the error cannot go to zero with order strictly greater than $\frac{1}{p}$.

On the other hand, when $\nu$ is compactly supported, the order of convergence for the uniform quantization error is at least $\frac{1}{p}$, and the best possible order is $\theta=1$.
A well-known sufficient condition for order $\theta = 1$ is that $\nu$ is absolutely continuous with support a compact interval, and density isolated from zero on this interval.
Broadly speaking, when $\nu$ has connected compact support, it is the local behavior of the density near its zeros that determines the convergence rate of the quantization error.
In a first version of~\cite{bencheikh2022approximation}, Bencheikh and Jourdain prove that for any $a > 1$, the probability density $\rho(x) = a x^{a-1}$ with support $[0,1]$ has a uniform quantization error going to zero at order exactly $\min\{\frac{1}{p} + \frac{1}{a},1\}$, with a logarithmic multiplicative correction factor $(\log N)^{\frac{1}{p}}$ for the limit case $\frac{1}{p}+\frac{1}{a} = 1$.

\subsubsection*{Comparison criterion}
As a final remark concerning uniform quantization in dimension one, we give a simple criterion for upper and lower bounds on the asymptotic error rate of a density with connected compact support and a \textit{finite} number of zeros in the support.
If the density is locally bounded below by some common power $|x-x_0|^{b-1}$ around each of the zeros $x_0$, its asymptotic error rate is at least that of the power-law in question.
Conversely, if the density is locally bounded above by some power-law $|x-x_0|^{a-1}$ in the neighborhood of \textit{some} arbitrary zero, the asymptotic order of convergence cannot exceed that of the corresponding power-law.
In fact, for the latter property, it suffices to consider \textit{lateral} neighborhoods, i.e. $(x_0-\eps,x_0]$ or $[x_0,x_0+\eps)$.

\subsection{The partial transport case}

In many cases, one is in fact interested in the \textit{riskiest part} of the riskiest dependence structure, rather than in the \textit{whole} dependence structure. Mathematically, this translates into a spectral function with non-full support $(1-m,1)$, so that only the most dangerous fraction of prescribed mass $m \in (0,1)$ is taken into account in the spectral risk measure. This section is devoted to the specific case where the spectral risk measure in question is the conditional value at risk $\mathrm{CVaR}_m$, corresponding to $\alpha \propto \mathbbm{1}_{(1-m,1)}$.

Since $\alpha$ vanishes on the interval $(0,1-m)$ and is constant on $(1-m,1)$, Problem~\eqref{eq:risk_problem} has the following \textit{partial} multimarginal optimal transport formulation
\begin{equation}
\label{eq:partial_risk_problem}
\tag{$\mathcal{P}^m$}
\max_{\gamma \in \Gamma_{m}(\rho_1,\dots,\rho_D)}
\frac{1}{m} \int_{\boldsymbol{\X}} c \, d\gamma,
\end{equation}
where $\Gamma_m(\rho_1,\dots,\rho_D)$ denotes the set of (positive) measures of mass $m$ on $\boldsymbol{\X}$ whose marginals are respectively dominated by the $\rho_j$. That is, a measure $\gamma \in \mathcal{M}^+(\boldsymbol{\X})$ is in $\Gamma_m(\rho_1,\dots,\rho_D)$ if and only if we have $\gamma(\boldsymbol{\X}) =m$ and for every $j\in\{1,\dots,D\}$,
\begin{equation}
\label{ineq:constraints_partial}
[\pi^j_\#\gamma](B) \leq \rho_j(B), \quad \forall \ \text{Borel set} \ B \inc \X_j.
\end{equation}
We refer to the work of Kitagawa and Pass in~\cite{kitagawa2015multi} for more details on partial multimarginal transport.

As a consequence, we look for a discrete measure of the form $\delta^m_Y := m\delta_Y$ with $Y \in \boldsymbol{\X}^N$, and the corresponding discretized problem reads
\begin{equation}
\label{eq:discretized_partial_risk_problem}
\tag{$\mathcal{P}^m_N$}
\min_{Y \in \boldsymbol{\X}^N}
-\frac{1}{N}\sum_{i=1}^N c(y_i)
+ \lambda_N \sum_{j=1 }^D W_{p,\max}(\rho_j,\pi^j_\#\delta_Y)^p,
\end{equation}
where $W_{p,\max}$ denotes the \textit{partial $p$-Wasserstein cost}. For two (positive) measures $\rho,\nu \in \mathcal{M}^+_p(\R^n)$ with finite $p$-moment, the latter is defined by
\begin{equation}
\label{eq:partial_Wasserstein_cost}
W_{p,\max}(\rho,\mu)^p :=
\min_{\zeta\in\Gamma_{\max}(\rho,\nu)}
\int_{\R^n\times\R^n} |x-y_j|^p d\zeta(x,y),
\end{equation}
where $\Gamma_{\max}(\rho,\nu)$ is the set of measures on $\R^n\times\R^n$ of mass $\min\{\rho(\R^n),\nu(\R^n)\}$ whose marginals are respectively dominated by $\rho$ and $\nu$. The subscript \textit{max} refers to the fact that the maximum of quantity is transported between the two measures. Unlike the usual Wasserstein distance, the partial cost $W_{p,\max}$ is not a distance in the mathematical sense, as it lacks the separation property. Indeed, it is straightforward to show that $W_{p,\max}(\rho,\nu)$ is zero if and only if one of $\rho$ or $\nu$ is dominated by the other one.

Since the discrete measure in~\eqref{eq:discretized_partial_risk_problem} is of mass $m < 1$, the penalty terms penalize how far its marginals are from being respectively dominated by the $\rho_j$, which is precisely what we desire to take into account the inequality constraints~\eqref{ineq:constraints_partial} via the penalty method.
For the convergence result of this section, we will use the following assumption.

\begin{assumption}
\label{a:p_and_beta-Hölder}
We have $p \in [1,\infty)$, and there exists $\beta \in (0,1]$ such that $c$ is $\beta$-Hölder, with Hölder constant $C_{\mathrm{H}}$.
\end{assumption}

The following lemma is a direct consequence of the above assumption. Of course, the estimate in question is deducible from Lemma~\ref{lem:control_on_the_spectral_risk_measure} by noticing that $\alpha_m \propto \mathbbm{1}_{(1-m,1)}$ satisfies Assumption~\ref{a:p_Lq_and_beta-Hölder} with $q = \infty$, but we prefer to provide a separate proof for the sake of completeness.

\begin{lemma}[Hölder continuity of the $\mathrm{CVaR}$ objective function]
\label{lem:control_on_the_CVaR}
Under Assumption~\ref{a:p_and_beta-Hölder}, for any pair of measures $\gamma,\tilde{\gamma}$ on $\boldsymbol{\X}$ of mass $m$, we have
\begin{equation}
\label{ineq:control_on_the_CVaR}
\left|
\frac{1}{m}\int_{\boldsymbol{\X}} c\,d\gamma
- \frac{1}{m}\int_{\boldsymbol{\X}} c\,d\tilde{\gamma}
\right|
\leq
C_{\mathrm{H}}
W_p(\gamma,\tilde{\gamma})^\beta.
\end{equation}
\end{lemma}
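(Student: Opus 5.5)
Let $\zeta$ be a $W_p$-optimal coupling between the mass-$m$ measures $\gamma$ and $\tilde{\gamma}$; such a coupling is a measure of mass $m$ on $\boldsymbol{\X}\times\boldsymbol{\X}$ with marginals $\gamma$ and $\tilde{\gamma}$. Here $W_p$ between measures of equal mass $m$ means
\[
W_p(\gamma,\tilde{\gamma})^p = \min_{\zeta} \int |x-\tilde{x}|^p\,d\zeta(x,\tilde{x}),
\]
with $\zeta$ ranging over couplings of total mass $m$. Before writing the difference as a single integral, I will check that $\int c\,d\gamma$ and $\int c\,d\tilde{\gamma}$ are finite, or at least that their difference is well defined. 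If the measures have finite $p$-moment this follows from the Hölder bound $|c(x)|\le |c(x_o)|+C_{\mathrm{H}}|x-x_o|^\beta$ together with $\beta\le p$. Otherwise $W_p(\gamma,\tilde{\gamma})$ may be infinite and the inequality is trivial. With this in place,
\[
\frac{1}{m}\int c\,d\gamma - \frac{1}{m}\int c\,d\tilde{\gamma}
= \frac{1}{m}\int_{\boldsymbol{\X}\times\boldsymbol{\X}} \bigl(c(x)-c(\tilde{x})\bigr)\,d\zeta(x,\tilde{x}).
\]

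Next I bound the integrand by the Hölder assumption, $|c(x)-c(\tilde{x})|\le C_{\mathrm{H}}|x-\tilde{x}|^\beta$. This gives
\[
\Bigl|\frac{1}{m}\int c\,d\gamma - \frac{1}{m}\int c\,d\tilde{\gamma}\Bigr| \le C_{\mathrm{H}}\int |x-\tilde{x}|^\beta \, d\bigl(\tfrac{1}{m}\zeta\bigr).
\]
The normalized measure $\zeta/m$ is a probability measure. Since $\beta\le 1\le p$, Jensen's inequality applied to the concave map $t\mapsto t^{\beta/p}$ yields
\[
\int |x-\tilde{x}|^\beta\,d\bigl(\tfrac{1}{m}\zeta\bigr) \le \Bigl(\frac{1}{m}\int |x-\tilde{x}|^p\,d\zeta\Bigr)^{\beta/p} = m^{-\beta/p}\,W_p(\gamma,\tilde{\gamma})^\beta.
\]

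The main point requiring care is the normalization factor $m^{-\beta/p}$, which is $\ge 1$ because $m<1$. There are two natural readings of the statement. If $W_p$ between measures of mass $m$ is understood as the distance between the normalized measures $\gamma/m$ and $\tilde{\gamma}/m$, the factor disappears and the stated constant $C_{\mathrm{H}}$ is exact. If instead $W_p$ is the unnormalized optimal cost between measures of mass $m$, as used for $W_{p,\max}$, the bound carries the extra factor $m^{-\beta/p}$. I would adopt the normalized convention, which is consistent with the remark that the estimate follows from Lemma~\ref{lem:control_on_the_spectral_risk_measure} applied to probability measures. Under that convention the claimed inequality~\eqref{ineq:control_on_the_CVaR} holds as stated.
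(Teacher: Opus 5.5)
Your argument is essentially the paper's: couple the two measures, bound the integrand with the Hölder estimate on $c$, and pass from the $\beta$-transport cost to $W_p$. Your Jensen step with $t\mapsto t^{\beta/p}$ is exactly the inequality $W_\beta\le W_p$ that the paper invokes, and your normalization caveat agrees with the paper's proof: it uses couplings $\Pi\in\Gamma(m^{-1}\gamma,m^{-1}\tilde{\gamma})$, so it proves the bound with $W_p(m^{-1}\gamma,m^{-1}\tilde{\gamma})$, and the unnormalized distance only adds the harmless constant factor $m^{-\beta/p}$.
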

\begin{proof}
For any coupling $\Pi \in \Gamma(m^{-1}\gamma,m^{-1}\tilde{\gamma}) \inc \P(\boldsymbol{\X}\times\boldsymbol{\X})$, the left-hand side of~\eqref{ineq:control_on_the_CVaR} is bounded above by
\begin{align*}
\left|
\int_{\boldsymbol{\X}} (c(x) -c(\tilde{x})) \,d\Pi(x,\tilde{x})
\right|
&\leq
\int_{\boldsymbol{\X}\times\boldsymbol{\X}} |c(x)-c(\tilde{x})|d\Pi(x,\tilde{x})
\\
&\leq
C_{\mathrm{H}}
\int_{\boldsymbol{\X}\times\boldsymbol{\X}} |x-\tilde{x}|^\beta d\Pi(x,\tilde{x}),
\end{align*}
where we used $\beta$-Hölder continuity of $c$. Taking the infimum over $\Pi$ and using the standard inequality $W_\beta \leq W_p$ derived from $\beta \leq p$ yields the desired estimate.
\end{proof}

\begin{lemma}
For any two measures $\gamma,\tilde{\gamma} \in \mathcal{M}^+_p(\boldsymbol{\X})$ with finite $p$-moment and of same finite mass, we have
\begin{equation}
W_p(\gamma,\tilde{\gamma})^p
\gtrsim
\sum_{j=1}^D W_{p,\max}(\pi^j_\#\gamma,\pi^j_\#\tilde{\gamma})^p.
\end{equation}
\end{lemma}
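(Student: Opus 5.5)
Take an optimal coupling $\Pi$ between $\gamma$ and $\tilde{\gamma}$ for the $p$-Wasserstein cost. Since both measures have the same finite mass $m$, $\Pi$ is a positive measure on $\boldsymbol{\X}\times\boldsymbol{\X}$ of mass $m$ with marginals exactly $\gamma$ and $\tilde{\gamma}$, and
\[
W_p(\gamma,\tilde{\gamma})^p = \int_{\boldsymbol{\X}\times\boldsymbol{\X}} \|x-\tilde{x}\|^p \, d\Pi(x,\tilde{x}).
\]
We argue coordinate by coordinate, as in the proof of Lemma~\ref{lem:N-block_joint_measure}. By equivalence of norms on $\R^D$ applied to the vector of block norms $(\|x_j-\tilde{x}_j\|)_{j}$, we have $\|x-\tilde{x}\|^p \gtrsim \sum_{j=1}^D \|x_j-\tilde{x}_j\|^p$, with a constant depending only on $D$ and $p$. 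Indeed, $\|x-\tilde{x}\|^2 = \sum_j \|x_j-\tilde{x}_j\|^2$, and all $\ell^r$ norms on $\R^D$ are equivalent. (In fact $\|x-\tilde x\|\ge \|x_j-\tilde x_j\|$ for each $j$, so the constant $1/D$ already works.) Integrating against $\Pi$ gives
\[
W_p(\gamma,\tilde{\gamma})^p \gtrsim \sum_{j=1}^D \int \|x_j-\tilde{x}_j\|^p \, d\Pi(x,\tilde{x}) = \sum_{j=1}^D \int_{\X_j\times\X_j} |u-v|^p \, d\zeta_j(u,v),
\]
where $\zeta_j := (\pi^j\times\pi^j)_\#\Pi$.

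It remains to show that each $\zeta_j$ is admissible in the definition~\eqref{eq:partial_Wasserstein_cost} of $W_{p,\max}(\pi^j_\#\gamma,\pi^j_\#\tilde{\gamma})$. The measure $\zeta_j$ has mass $m$, which equals $\min\{\pi^j_\#\gamma(\X_j),\pi^j_\#\tilde{\gamma}(\X_j)\}$ because both projections have mass $m$. Its marginals are exactly $\pi^j_\#\gamma$ and $\pi^j_\#\tilde{\gamma}$, so they are in particular dominated by them. Hence $\zeta_j\in\Gamma_{\max}(\pi^j_\#\gamma,\pi^j_\#\tilde{\gamma})$, and the $j$-th integral is bounded below by $W_{p,\max}(\pi^j_\#\gamma,\pi^j_\#\tilde{\gamma})^p$. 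Summing over $j$ concludes.

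There is no real obstacle. The only points to check are the following.
\begin{itemize}
\item The existence of an optimal $\Pi$ for measures of equal finite mass, which follows by rescaling to probability measures; alternatively, work with a near-optimal coupling and let the tolerance go to zero.
\item The fact that equality of masses makes the full coupling admissible for the partial problem.
\end{itemize}
In fact, with equal masses, $W_{p,\max}$ between the projections coincides with their usual (mass-$m$) Wasserstein distance, so the statement reduces to the fact that projections do not increase Wasserstein distances.
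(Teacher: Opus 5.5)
Your proof is correct and follows essentially the paper's argument: bound $\|x-\tilde x\|^p$ below by the sum of the coordinate-block terms, push the coupling forward by $\pi^j\times\pi^j$, and observe that the projected coupling is admissible for each marginal problem. Your explicit check that $\zeta_j\in\Gamma_{\max}(\pi^j_\#\gamma,\pi^j_\#\tilde\gamma)$ fills in a step the paper leaves implicit. Equal masses force $\Gamma_{\max}=\Gamma$, whereas the paper only states the lower bound by $W_p$ and never mentions $W_{p,\max}$ at that point.
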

\begin{proof}
Take an arbitrary joint measure $\Pi \in \Gamma(\gamma,\tilde{\gamma})$, of mass $m$ by definition. Thanks to equivalence of norms in $\R^D$, we find
\begin{align*}
\int_{\boldsymbol{\X}\times\boldsymbol{\X}} \|x-\tilde{x}\|^p \,d\Pi(x,\tilde{x})
&\gtrsim
\sum_{j=1}^D \int |x_j -\tilde{x}_j|^p \, d[\pi^{j,j+D}_\#\Pi](x_j,\tilde{x}_j).
\end{align*}
Since $\pi^{j,j+D}_\#\Pi$ is a joint measure of the respective $j$-th marginals of $\gamma$ and of $\tilde{\gamma}$, the $j$-th term in the sum is bounded below by $W_p(\pi^j_\#\gamma,\pi^j_\#\tilde{\gamma})^p$, and the desired inequality is obtained by taking the infimum in $\Pi$.
\end{proof}

As in the previous section, we introduce the functionals $\F^m,\F^m_N : \mathcal{M}^+_p(\boldsymbol{\X}) \to \R\cup\{+\infty\}$ that are implicitly minimized in Problems~\eqref{eq:partial_risk_problem} and~\eqref{eq:discretized_partial_risk_problem}, respectively. These are defined by
\begin{equation*}
\begin{cases}
\F^m(\gamma)
&= -\frac{1}{m}\int_{\boldsymbol{\X}}c\,d\gamma
+ \chi_{\Gamma_m(\rho_1,\dots,\rho_D)}(\gamma),
\\[4pt]
\F^m_N(\gamma)
&= -\frac{1}{m}\int_{\boldsymbol{\X}}c\,d\gamma
+ \lambda_N \sum_{j=1}^D W_{p,\max}(\rho_j,\pi^j_\# \gamma)^p
+ \chi_{\D^m_N}(\gamma),
\end{cases}
\end{equation*}
where $\D^m_N = \{m\delta_Y : Y \in \boldsymbol{\X}^N\}$ is the set of uniform clouds of $N$ points in $\boldsymbol{\X}$ that are of mass $m$.

By the same arguments as in the beginning of Section~\ref{susbec:convergence_standard}, with the additional inequality $W_{p,\max}(\rho_j,m\delta_{Y_j}) \leq W_p(\pi^j_\#\gamma^*,m\delta_{Y_j})$, we find
\[\boxed{|\min\F_N^m - \min\F^m|=\O(u_N^\beta),}\]
where $u_N$ is the optimal $N$-point uniform quantization error of some solution $\gamma^*$.
The construction of $\gamma_N$ is detailed in the next lemma, and is essentially the same as in Section~\ref{susbec:convergence_standard}, except that we consider the \textit{partial} optimal transports from the $\rho_j$ to the respective projections of the point cloud.

\begin{lemma}[Block approximation]
\label{lem:N-block_partial_joint_measure}
Let $m\delta_Y \in \D_N$ be a uniform cloud of mass $m$ consisting of $N$ points in $\boldsymbol{\X}$, and denote $m\delta_{Y_j}$ its projection on $\X_j$. There exists a measure $\gamma \in \Gamma_m(\rho_1,\dots,\rho_D)$ such that
\begin{equation}
W_p(\gamma,m\delta_Y)
\lesssim
\left( \sum_{j=1}^D W_{p,\max}(\rho_j,m\delta_{Y_j})^p \right)^{\frac{1}{p}}.
\end{equation}
\end{lemma}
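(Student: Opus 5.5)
We follow the proof of Lemma~\ref{lem:N-block_joint_measure}, using partial optimal transport plans in place of full ones. Let $y_1,\dots,y_N$ be the points of the cloud and $y_{i,j}$ their $j$-th coordinates. For each $j$, let $\zeta_j \in \Gamma_{\max}(\rho_j, m\delta_{Y_j})$ be an optimal plan for $W_{p,\max}(\rho_j,m\delta_{Y_j})$. Existence of $\zeta_j$ follows from compactness of $\Gamma_{\max}$ for the weak topology, using tightness from the finite $p$-moment of $\rho_j$, and lower semicontinuity of the cost. Since $m\delta_{Y_j}$ has mass $m \leq 1 = \rho_j(\X_j)$, the plan $\zeta_j$ has mass $m$. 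Its second marginal is dominated by $m\delta_{Y_j}$ and has the same mass, so it equals $m\delta_{Y_j}$. Consequently $\zeta_j$ transports a submeasure of $\rho_j$ of mass $m$ entirely onto $m\delta_{Y_j}$.

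Next we disintegrate $\zeta_j$ with respect to its second variable. This gives positive measures $\rho_{i,j}$ on $\X_j$, each of mass $m/N$, such that $\rho_{i,j}$ is sent to $y_{i,j}$. We also need $\sum_{i=1}^N \rho_{i,j} = \pi^1_\#\zeta_j \leq \rho_j$. One point requires care: if several indices $i$ share the same value of $y_{i,j}$, the mass arriving at that point must be split equally among them. This is always possible, since we can take the conditional measure at that point and divide it by the multiplicity. We then define
\[
\gamma = \Big(\frac{N}{m}\Big)^{D-1}\sum_{i=1}^N \rho_{i,1}\otimes\dots\otimes\rho_{i,D}.
\]
Each block has mass $(N/m)^{D-1}(m/N)^D = m/N$, so $\gamma$ has total mass $m$. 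The $j$-th marginal of block $i$ equals $\rho_{i,j}$, so $\pi^j_\#\gamma = \sum_i \rho_{i,j} \leq \rho_j$. Hence $\gamma \in \Gamma_m(\rho_1,\dots,\rho_D)$.

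For the distance estimate we send each block to $y_i$. This is an admissible coupling between $\gamma$ and $m\delta_Y$, since both block $i$ and the Dirac mass at $y_i$ carry mass $m/N$. By equivalence of norms in $\R^D$,
\[
W_p(\gamma,m\delta_Y)^p \lesssim \Big(\frac{N}{m}\Big)^{D-1}\sum_{i=1}^N\int \sum_{j=1}^D |x_j-y_{i,j}|^p \, d(\rho_{i,1}\otimes\dots\otimes\rho_{i,D}) = \sum_{j=1}^D \sum_{i=1}^N \int |x_j - y_{i,j}|^p \, d\rho_{i,j}.
\]
For each $j$, the inner sum is exactly the cost of $\zeta_j$, namely $W_{p,\max}(\rho_j,m\delta_{Y_j})^p$. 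Taking the $p$-th root gives the stated inequality.

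The only step that goes beyond Lemma~\ref{lem:N-block_joint_measure} is the first one. We must check that an optimal partial plan exists and saturates the whole cloud $m\delta_{Y_j}$ rather than only part of it. This holds because the mass of the cloud is at most that of $\rho_j$, so the set $\Gamma_{\max}$ forces the plan to carry all of the cloud's mass. Once this is established, the normalization of the blocks and the cost computation are routine adaptations.
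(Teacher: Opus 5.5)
Your proposal is correct and is essentially the paper's argument. The paper obtains the same measure as $\gamma = m\gamma^{\mathrm{prob}}$, where $\gamma^{\mathrm{prob}}$ comes from applying Lemma~\ref{lem:N-block_joint_measure} to $\delta_Y$ and the normalized active parts $m^{-1}\rho_j^{\mathrm{act}}$, followed by the scaling $W_p(m\mu,m\nu)=m^{1/p}W_p(\mu,\nu)$; this yields exactly your $(N/m)^{D-1}\sum_i \rho_{i,1}\otimes\dots\otimes\rho_{i,D}$. You instead unroll the block construction directly with partial plans, and you are slightly more explicit than the paper about why the optimal partial plan saturates $m\delta_{Y_j}$ and about how to split mass when coordinates $y_{i,j}$ coincide.
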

\begin{proof}
Consider the active parts $\rho_{j}^{\mathrm{act}}$ of the probability measures $\rho_j$ in their respective $W_p$-optimal transport to the projections $m\delta_{Y_j}$. That is, the partial optimal transport plan for $W_{p,\max}(\rho_j,m\delta_{Y_j})$ has first marginal $\rho_j^{\mathrm{act}}$.
Applying Lemma~\ref{lem:N-block_joint_measure} to $\delta_Y$ and the normalized measures $m^{-1}\rho_j^{\mathrm{act}}$, we obtain some probability measure $\gamma^{\mathrm{prob}} \in \Gamma(m^{-1}\rho_1^{\mathrm{act}},\dots,m^{-1}\rho_D^{\mathrm{act}})$
satisfying
\begin{equation*}
W_p(\gamma^{\mathrm{prob}},\delta_Y)
\lesssim
\left( \sum_{j=1}^D W_p(m^{-1}\rho_j^{\mathrm{act}},\delta_{Y_j})^p \right)^{\frac{1}{p}}.
\end{equation*}
Letting $\gamma := m\gamma^{\mathrm{prob}} \in \Gamma_m(\rho_1^{\mathrm{act}},\dots,\rho_D^{\mathrm{act}}) \inc \Gamma_m(\rho_1,\dots,\rho_D)$ then yields the desired estimate, thanks to the scaling
$W_p(m\mu,m\nu) = m^{\frac{1}{p}}W_p(\mu,\nu)$ and to the equality $W_p(\rho_j^{\mathrm{act}},m\delta_{Y_j}) = W_{p,\max}(\rho_j,m\delta_{Y_j})$, by optimality of the $\rho_j$.
\end{proof}

Lemma~\ref{lem:N-block_joint_measure} leads to the following global convergence results for the partial version of the problem.

\begin{theorem}
\label{thm:general_convergence_result_partial}
Suppose that Assumption~\ref{a:p_and_beta-Hölder} holds, and let $u_N$ be some upper bound on the $W_p$-optimal uniform quantization error for some fixed minimizer $\gamma^*$ of $\F$. Then, letting $\lambda_N = u_N^{-(p-\beta)}$, we have
\begin{equation}
\label{eq:general_asymptotic_rate_partial}
|\min \F_N^m - \min \F^m| = \O(u_N^\beta).
\end{equation}
Moreover, for any sequence of minimizers $\delta_{Y_N^*} \in \argmin \F_N^m$, $N \in \N$, every weak limit point is a minimizer of $\F^m$, and for every $j \in \{1,\dots,D\}$ we have
\begin{equation}
\label{ineq:estimate_for_error_on_marginals_partial}
W_{p,\max}(\rho_j, \pi^j_\#\delta_{Y_N^*})
= \O(u_N).
\end{equation}
\end{theorem}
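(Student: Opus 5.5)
We follow the proof of Theorem~\ref{thm:general_convergence_result} line by line, replacing each ingredient by its partial counterpart. These are Lemma~\ref{lem:control_on_the_CVaR} for Hölder continuity, the projection inequality for $W_{p,\max}$ proved above, and Lemma~\ref{lem:N-block_partial_joint_measure} for the block approximation. Throughout, $\gamma^*$ denotes the fixed minimizer of $\F^m$ (so $\gamma^*\in\Gamma_m(\rho_1,\dots,\rho_D)$), and $u_N$ bounds the $W_p$-error of an optimal $N$-point uniform quantizer $m\delta_{Y}$ of $\gamma^*$ of mass $m$.

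\textbf{Upper bound.} Plug $m\delta_Y$ into $\F^m_N$. By Lemma~\ref{lem:control_on_the_CVaR}, the objective term differs from $\F^m(\gamma^*)$ by at most $C_{\mathrm{H}}u_N^\beta$. For each $j$ we have $\pi^j_\#\gamma^*\leq\rho_j$, so any coupling between $\pi^j_\#\gamma^*$ and $m\delta_{Y_j}$ is admissible in $\Gamma_{\max}(\rho_j,m\delta_{Y_j})$. Hence
\[
W_{p,\max}(\rho_j,m\delta_{Y_j})\leq W_p(\pi^j_\#\gamma^*,m\delta_{Y_j})\leq W_p(\gamma^*,m\delta_Y)\leq u_N .
\]
This gives $\min\F^m_N-\min\F^m\lesssim u_N^\beta+\lambda_N u_N^p\lesssim u_N^\beta$, using $\lambda_N=u_N^{-(p-\beta)}$.

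\textbf{Lower bound.} Let $m\delta_{Y_N^*}$ minimize $\F^m_N$, and let $\gamma_N\in\Gamma_m(\rho_1,\dots,\rho_D)$ be given by Lemma~\ref{lem:N-block_partial_joint_measure}. Set $S_N=\sum_jW_{p,\max}(\rho_j,\pi^j_\#m\delta_{Y_N^*})^p$, so that $W_p(\gamma_N,m\delta_{Y_N^*})^p\lesssim S_N$. Then
\[
\min\F^m\leq\F^m(\gamma_N)\leq -\tfrac1m\int c\,d(m\delta_{Y_N^*})+C S_N^{\beta/p},
\]
which yields $\F^m_N(m\delta_{Y_N^*})\geq\min\F^m-CS_N^{\beta/p}+\lambda_NS_N$. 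We then apply Young's inequality exactly as before:
\[
CS_N^{\beta/p}\leq\tfrac12\lambda_NS_N+C''\lambda_N^{-\beta/(p-\beta)}=\tfrac12\lambda_NS_N+C''u_N^\beta .
\]
This gives $\min\F^m_N-\min\F^m\gtrsim -u_N^\beta$, and hence \eqref{eq:general_asymptotic_rate_partial}. Combining the same inequality with the upper bound $\F^m_N(m\delta_{Y_N^*})\leq\min\F^m+C'u_N^\beta$ gives $\lambda_NS_N\lesssim u_N^\beta$, i.e.\ $S_N\lesssim u_N^p$, which is \eqref{ineq:estimate_for_error_on_marginals_partial}.

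\textbf{Limit points.} Take a subsequence with $m\delta_{Y_N^*}\rightharpoonup\gamma$. Since $W_p(\gamma_N,m\delta_{Y_N^*})\lesssim S_N^{1/p}\to0$, the measures $\gamma_N$ have the same weak limit. The set $\Gamma_m(\rho_1,\dots,\rho_D)$ is weakly closed: mass is preserved because it is tight, and the domination $\pi^j_\#\gamma\leq\rho_j$ passes to the limit by testing against nonnegative continuous functions. Hence $\gamma\in\Gamma_m$.

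\textbf{Main obstacle.} The hardest step is passing to the limit in the objective $\int c\,d\gamma_N$, because weak convergence alone does not give convergence of $\int c$ when $c$ is only Hölder and may be unbounded. We plan to handle this with uniform integrability. Since $|c(x)|\lesssim1+|x|^\beta$ and the marginals of every $\gamma_N$ are dominated by $\rho_j\in\P_p$, the family $\{|x|^\beta\}$ is uniformly integrable along $(\gamma_N)$, and therefore $\int c\,d\gamma_N\to\int c\,d\gamma$. Combined with $\int c\,d\gamma_N-\int c\,d(m\delta_{Y_N^*})\to0$ and the value convergence, this gives $\F^m(\gamma)=\min\F^m$.
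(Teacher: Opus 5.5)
Your proposal is correct and takes the paper's route. The paper only sketches this theorem by pointing back to the proof of Theorem~\ref{thm:general_convergence_result}, using the inequality $W_{p,\max}(\rho_j,m\delta_{Y_j})\le W_p(\pi^j_\#\gamma^*,m\delta_{Y_j})$ and the partial block approximation of Lemma~\ref{lem:N-block_partial_joint_measure}. Your limit-point argument correctly fills in a claim the paper asserts without proof, in this theorem and in the balanced one: you establish weak closedness of $\Gamma_m(\rho_1,\dots,\rho_D)$, then get uniform integrability of $c$ by passing through $\gamma_N$, whose marginals are dominated by the $\rho_j$.

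There is one caveat, which the paper shares. The Young step $CS_N^{\beta/p}\le\frac12\lambda_NS_N+C''\lambda_N^{-\beta/(p-\beta)}$ requires $\beta<p$. In the balanced case this is automatic, since $q<\infty$ forces $\beta/p<1$. The hypotheses here ($p\in[1,\infty)$, $\beta\in(0,1]$) allow $p=\beta=1$, in which case $\lambda_N\equiv1$ and the statement genuinely fails. Take $\X_j=\R$, $\rho_j=\L_{(0,1)}$ and $c(x)=K\sum_j x_j$ with $K>m$, and place all particles at $(t,\dots,t)$. Then
\[
\F^m_N(m\delta_Y)=D(m-K)\,t+O(1)\to-\infty \qquad \text{as } t\to\infty,
\]
while $\min\F^m$ is finite. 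So you should state $\beta<p$ explicitly. Your limit-point argument also needs $u_N\to0$, which is implicit in the paper as well.
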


Once again thanks to the direct generalization of Mérigot and Mirebeau's result stated in Proposition~\ref{prop:uniform_quantization_Merigot-Mirebeau}, we deduce the following immediate corollary.

\begin{corollary}
\label{cor:convergence_with_dimension_partial}
Suppose that Assumption~\ref{a:p_and_beta-Hölder} holds and that the $\rho_j$ are all compactly supported. Let $d \in [1,D]$ be the box dimension of the support of some minimizer of $\F$. Then, letting $\lambda_N = \tau_{p,d}(N)^{-(p-\beta)}$ where $\tau_{p,d}$ is defined in~\eqref{eq:tau_of_N}, we have
\begin{equation}
\label{eq:asymptotic_rate_dimension_partial}
|\min\F_N^m - \min\F^m| = \O
\begin{cases}
N^{-\frac{\beta}{d}}
\quad &\mathrm{if} \quad d > p,
\\
(\log N)^{\frac{\beta}{p}} N^{-\frac{\beta}{p}}
\quad &\mathrm{if} \quad d = p,
\\
N^{-\frac{\beta}{p}}
\quad &\mathrm{if} \quad d < p,
\end{cases}
\end{equation}
where $\O$ hides a constant which depends on the support of the minimizer in question.
Moreover, for any sequence of minimizers $m\delta_{Y_N^*} \in \argmin \F_N^m$, $N \in \N$, every weak limit point is a minimizer of $\F^m$.
\end{corollary}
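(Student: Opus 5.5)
The corollary follows by plugging the quantization bound of Proposition~\ref{prop:uniform_quantization_Merigot-Mirebeau} into Theorem~\ref{thm:general_convergence_result_partial}. That theorem only needs \emph{some} upper bound $u_N$ on the $W_p$-optimal uniform quantization error of a fixed minimizer $\gamma^*$ of $\F^m$, used with $\lambda_N = u_N^{-(p-\beta)}$.

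\textbf{Step 1: normalize the minimizer.} The minimizer $\gamma^*$ has mass $m$ rather than $1$, so we apply the quantization result to the probability measure $m^{-1}\gamma^*$. Its support is that of $\gamma^*$. Since the $\rho_j$ are compactly supported and $\pi^j_\#\gamma^*\le\rho_j$, this support lies in $\prod_j \spt\rho_j$ and is therefore compact. By hypothesis it has box dimension $d$.

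\textbf{Step 2: quantize.} Proposition~\ref{prop:uniform_quantization_Merigot-Mirebeau}, applied in the Euclidean metric space $\boldsymbol{\X}$, gives points $z_1,\dots,z_N$ with
\[
W_p\Bigl(m^{-1}\gamma^*, N^{-1}\textstyle\sum_i\delta_{z_i}\Bigr)\lesssim \tau_{p,d}(N).
\]
By the scaling $W_p(m\mu,m\nu)=m^{1/p}W_p(\mu,\nu)$, the cloud $m\delta_Z\in\D^m_N$ satisfies
\[
W_p(\gamma^*,m\delta_Z)\lesssim m^{1/p}\tau_{p,d}(N).
\]
We therefore take $u_N:=C\,\tau_{p,d}(N)$, where $C$ depends only on $p$, $d$, $m$ and the support. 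Replacing $u_N$ by $\tau_{p,d}(N)$ in $\lambda_N$ changes the penalty only by a bounded multiplicative constant. Both the upper bound $u_N^\beta+\lambda_N u_N^p$ and the Young inequality step survive a constant factor in $\lambda_N$, so $\lambda_N=\tau_{p,d}(N)^{-(p-\beta)}$ is admissible.

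\textbf{Step 3: read off the rate.} Theorem~\ref{thm:general_convergence_result_partial} now gives
\[
|\min\F_N^m-\min\F^m|=\O\bigl(\tau_{p,d}(N)^\beta\bigr).
\]
Using $\max(p,d)=d$ when $d>p$ and $\max(p,d)=p$ when $d<p$, we get $N^{-\beta/d}$ and $N^{-\beta/p}$ in those two cases. When $d=p$ we get $(\log N)^{\beta/d}N^{-\beta/d}=(\log N)^{\beta/p}N^{-\beta/p}$, which matches the displayed formula. The statement about weak limit points of $m\delta_{Y_N^*}$ is inherited directly from the theorem.

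\textbf{Main obstacle.} The only real care needed is Step 2. One must check that the minimizer of the partial problem, a sub-probability measure, can be quantized by clouds of the prescribed form $m\delta_Y$ using the normalized result. One must also check that the constants hidden in $\lambda_N$ are harmless. If the theorem is read as requiring exactly $\lambda_N=u_N^{-(p-\beta)}$, the alternative is to absorb $C$ into $u_N$ directly. This only rescales the $\O$-constant.
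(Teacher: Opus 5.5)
Your proposal is correct and follows the paper's route exactly: the paper presents this as an immediate corollary of Theorem~\ref{thm:general_convergence_result_partial} combined with Proposition~\ref{prop:uniform_quantization_Merigot-Mirebeau}. Your normalization of the mass-$m$ minimizer via $W_p(m\mu,m\nu)=m^{1/p}W_p(\mu,\nu)$, and your check that a constant factor in $\lambda_N$ is harmless, make explicit details the paper leaves implicit. Keep that latter justification (re-running the upper bound and the Young-inequality step) rather than the fallback of ``absorbing $C$ into $u_N$''; when $C>1$ the fallback would move $\lambda_N$ away from the prescribed $\tau_{p,d}(N)^{-(p-\beta)}$.
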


For a result analogous to Theorem~\ref{cor:convergence_with_comonotonicity}, one needs to control the uniform quantization error of submeasures of the form $(F_{\rho}^{-1})_\#\L_{(1-m,1)}$ in terms of that of $\rho$, for sufficiently well-behaved univariate probability measures $\rho$.

\begin{lemma}[Quantization of the restriction to an interval]
\label{lem:submeasure_quantization}
Let $\rho\in\P_p(\R)$ be absolute continuous with connected support. Suppose that, as $N \to \infty$,
\begin{equation*}
e_{p,N}(\rho)
\lesssim
\frac{(1 + \log N)^\zeta}{N^{\eta}}
\end{equation*}
for some exponents $\eta > 0$ and $\zeta \in \R$, where $\lesssim$ hides a constant which may depend on $\rho$ and on the exponents.
Then, for any $m\in(0,1)$, the measure $\rho_m := (F_\rho^{-1})_{\#}\L_{(0,m)}$ of mass $m$ satisfies
\begin{equation}
\label{ineq:quantization_of_restriction}
e_{p,N}(\rho_m)
\lesssim
\frac{(1 + \log N)^\zeta}{N^{\eta}}
\end{equation}
as $N \to \infty$, and the constant behind $\lesssim$ is independent of $m$. The same estimate holds for the measure $\tilde{\rho}_m := (F_\rho^{-1})_\#\L_{(1-m,1)}$.
\end{lemma}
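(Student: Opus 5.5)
The approach is to work entirely at the level of quantile functions, using the one-dimensional identity $W_p(\mu,\nu)^p=\int|F_\mu^{-1}-F_\nu^{-1}|^p$. This identity extends to measures of equal mass $m$, with the integral taken over $(0,m)$. An $N$-point uniform quantizer of $\rho_m$ is a nondecreasing step function $Q$ on $(0,m)$ that is constant on each interval $J_i=(\frac{(i-1)m}{N},\frac{im}{N})$, and
\begin{equation*}
e_{p,N}(\rho_m)^p=\min_{Q}\int_0^m|F_\rho^{-1}(t)-Q(t)|^p\,dt .
\end{equation*}
Similarly, $e_{p,M}(\rho)^p$ is the same minimum over $(0,1)$ with cells $C_k=(\frac{k-1}{M},\frac kM)$. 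For an interval $I\subseteq(0,1)$ I set $E(I)=\min_z\int_I|F_\rho^{-1}-z|^p$. This is monotone under inclusion, and in particular $\sum_{k\le K}E(C_k)\le e_{p,M}(\rho)^p$ for every $K\le M$.

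The first step is to fix $M=\lceil N/m\rceil\ge N$, so that every $J_i$ has length $m/N\ge 1/M$ only up to a rounding factor. More precisely, $|J_i|\ge |C_k|$ up to rounding, hence each $C_k$ meets at most two $J_i$'s, and conversely each $J_i$ meets at most $\lceil M m/N\rceil+1\le 3$ consecutive cells. Next I would let $z_1\le\dots\le z_M$ be an optimal $M$-point quantizer of $\rho$, with $z_k$ the optimal constant on $C_k$. On each $J_i$, I define $Q$ to be the value $z_k$ of the cell that contains the largest portion of $J_i$ (ties broken leftward), which keeps $Q$ nondecreasing. The cost then splits into a \emph{main part}, namely the contribution of the pieces $J_i\cap C_k$ where the chosen $z_k$ is the cell's own value, which is bounded by $\sum_{k\le mM+1}E(C_k)\le e_{p,M}(\rho)^p$, and a \emph{boundary part}, namely the pieces of $J_i$ lying in neighbouring cells.

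The boundary part is where I expect the real obstacle. For $t$ in a neighbour cell $C_{k\pm1}$, I would write
\begin{equation*}
|F_\rho^{-1}(t)-z_k|\le |F_\rho^{-1}(t)-z_{k\pm1}|+|z_{k\pm1}-z_k| .
\end{equation*}
The first term is again absorbed in $e_{p,M}(\rho)^p$. The second term requires controlling $\frac1M\sum_k|z_{k+1}-z_k|^p$ by the quantization error. The plan for this is to use that $\rho$ has connected support, so $F_\rho^{-1}$ is continuous, and that the optimal $z_k$ lies in $[F_\rho^{-1}(\frac{k-1}{M}),F_\rho^{-1}(\frac kM)]$. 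Then $z_{k+1}-z_k\le F_\rho^{-1}(\frac{k+1}M)-F_\rho^{-1}(\frac{k-1}M)$, and a discrete-derivative comparison should bound the resulting sum by $E(C_{k-1}\cup C_k\cup C_{k+1})$-type quantities. Those quantities are in turn dominated by a constant times $e_{p,\lfloor M/3\rfloor}(\rho)^p$, by merging triples of cells.

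If this comparison fails, the fallback is to avoid misalignment altogether. I would quantize $\rho$ with $M'=kN$ points for a large integer $k$, group the atoms so that the first $\lfloor mM'\rfloor$ cells are partitioned into $N$ blocks of consecutive cells, and use the convexity of $z\mapsto\int|F^{-1}-z|^p$ to replace each block by a single atom. The leftover sliver of mass $<1/M'$ is then handled by letting $k\to\infty$ with $N$ fixed.

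Finally, since $M\ge N$, the standing hypothesis gives $e_{p,M}(\rho)\lesssim(1+\log M)^\zeta M^{-\eta}$. Because $M\le 2N/m$ and $M\ge N$, this is $\lesssim(1+\log N)^\zeta N^{-\eta}$ up to a factor depending only on $\eta,\zeta$. Here I use that $x\mapsto(1+\log x)^\zeta x^{-\eta}$ is eventually decreasing, together with $M\ge N$, so that no power of $1/m$ enters. This gives a constant independent of $m$. The statement for $\tilde\rho_m$ follows by applying the result to the reflected measure $(-\mathrm{id})_\#\rho$, whose quantile function is $t\mapsto -F_\rho^{-1}(1-t)$ and which has the same quantization errors.
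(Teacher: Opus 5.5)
\textbf{There is a genuine gap: the boundary term is never controlled.} This is exactly where you expected trouble, and neither of your two resolutions works.

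The inequality you aim for, $\frac1M\sum_k|z_{k+1}-z_k|^p\lesssim e_{p,\lfloor M/3\rfloor}(\rho)^p$, is false. The interval functional $E$ is superadditive: merging cells can only increase the error. So the windows $C_{k-1}\cup C_k\cup C_{k+1}$ that straddle a boundary of the coarse grid are not controlled by that grid. A concrete counterexample: take $3\mid M$ and let $F_\rho^{-1}$ be a smooth, strictly increasing approximation of a staircase with unit jumps at the points $3l/M$.
\begin{itemize}
\item Then $e_{p,M}(\rho)$ and $e_{p,M/3}(\rho)$ are as small as you like.
\item Yet $|z_{3l+1}-z_{3l}|\approx1$ for every $l$, so $\frac1M\sum_k|z_{k+1}-z_k|^p\approx\frac13$.
\item Worse, for this $\rho$ your intervals $J_i$ drift relative to the $C_k$ and straddle the jumps. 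So $e_{p,N}(\rho_m)$ itself stays bounded below whenever $m\lceil N/m\rceil-N$ is not small.
\end{itemize}
Hence no estimate of $e_{p,N}(\rho_m)$ in terms of $e_{p,M}(\rho)$ and $e_{p,\lfloor M/3\rfloor}(\rho)$ alone can hold.

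Two further steps also fail.
\begin{itemize}
\item Passing through the endpoint values $F_\rho^{-1}(\frac{k\pm1}M)$ loses too much. A monotone function can rise sharply on a tiny subinterval next to an endpoint, making the increment large while $E(C_{k-1}\cup C_k\cup C_{k+1})$ stays small. The correct local estimate is the $L^p$ triangle inequality $M^{-1}|z_{k+1}-z_k|^p\lesssim E(C_k\cup C_{k+1})$, but this only moves the difficulty to the shifted pairs.
\item Your fallback is circular. Replacing a block of consecutive cells by one atom costs $E(\text{block})\ge\sum E(\text{cells})$, so convexity gives no bound in terms of $e_{p,kN}(\rho)$. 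As $k\to\infty$ the blocks simply become the $J_i$, and you recover $e_{p,N}(\rho_m)$ itself. Also, the blocks carry unequal masses unless $mk\in\N$.
\end{itemize}

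\textbf{How the paper proceeds, and what is missing.} The paper does not estimate misalignment; it removes it by assuming $m=a/b$ rational (see the remark following the lemma). With $bK$ atoms for $\rho$, the first $aK$ cells are exactly the cells of an $aK$-point quantizer of $\rho_m$. This gives $e_{p,aK}(\rho_m)\le e_{p,bK}(\rho)$ with no boundary term at all. It then reaches $N\notin a\N$ through a claimed monotonicity of $N\mapsto e_{p,N}$. That claim fails when the grids are not nested: a smoothed $\frac12\delta_0+\frac12\delta_1$ has $e_{p,3}>e_{p,2}$. So your difficulty is genuine, and it is not really settled there either.

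The missing ingredient is to use the hypothesis at many scales at once. One way:
\begin{itemize}
\item Since $E(J_i)\le\frac Nm\iint_{J_i^2}|F_\rho^{-1}(t)-F_\rho^{-1}(s)|^p\,ds\,dt$, the quantity $e_{p,N}(\rho_m)^p$ is at most $\frac Nm$ times the near-diagonal energy $\iint_{|t-s|<m/N}|F_\rho^{-1}(t)-F_\rho^{-1}(s)|^p\,ds\,dt$.
\item Two points $s<t$ with $t-s\le\frac1{8n}$ lie in a common cell $(\frac{j-1}{n'},\frac j{n'})$ for a fixed positive fraction of the integers $n'\in[n,2n]$.
\item Hence this energy at scale $\frac1{8n}$ is $\lesssim n^{-2}\sum_{n'=n}^{2n}e_{p,n'}(\rho)^p\lesssim n^{-1-p\eta}(1+\log n)^{p\zeta}$.
\end{itemize}
Choosing $n\approx N/(8m)\ge N/8$ then yields the estimate for every $m\in(0,1)$, with a constant independent of $m$.
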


\begin{remark}
The assumption that $m$ is rational makes the proof rather straightforward, but is likely unnecessary, as suggested by the multiplicative constant behind $\lesssim$ in~\eqref{ineq:quantization_of_restriction} being independent of $m$. However, one cannot directly extend our result to any arbitrary $m$. Indeed, and as highlighted in the proof, for a rational $m = \frac{a}{b}$ with $a,b$ coprime positive integers, the estimate~\eqref{ineq:quantization_of_restriction} only holds for $N \geq a$.
\end{remark}

\begin{proof}
Denote $u(N) = \frac{(1+\log N)^\zeta}{N^\eta}$, and write $m=\frac{a}{b}$ with $a < b$ coprime positive integers.
Let $(bK)^{-1}\sum_{i=1}^{bK} \delta_{x_i}$ be an optimal uniform quantizer of $\rho$ for $W_p$, where we suppose without loss of generality that $x_1 < x_2 < \dots < x_{bK}$. Since $m(bK) = aK$ is an integer, the $W_p$-optimal transport from $\rho$ to its uniform quantizer exactly send the measure $\rho_m$ to the leftmost $aK$ Dirac masses. As a result,
\begin{align*}
e_{p,aK}(\rho_m)
&\leq
\textstyle
W_p(\rho_m, (bK)^{-1}\sum_{i=1}^{aK}\delta_{x_i}) \\
&\leq
\textstyle
W_p(\rho, (bK)^{-1}\sum_{i=1}^{bK}\delta_{x_i}) \\
&= e_{p,bK}(\rho) \\
&\leq e_{p,aK}(\rho),
\end{align*}
where the last inequality comes from the straightforward observation that for an absolutely continuous measure on $\R$ with \textit{connected} support, the uniform quantization error is decreasing in the number of Dirac masses.
We thus have $e_{p,N}(\rho_m) \leq e_{p,N}(\rho)$ for any $N$ that is a multiple of $a$.
Now consider an arbitrary $N$, and denote $N = aK + r$ its Euclidean division by $a$.
Thanks to $\rho_m$ having connected support as well,
\begin{align*}
e_{p,N}(\rho_m)
&\leq e_{p,aK}(\rho) = u(aK) \\
&= \frac{u(aK)}{u(aK+r)} u(\underbrace{aK+r}_{N}).
\end{align*}
Since $u(n) = \frac{(1+\log n)^\zeta}{n^\eta}$ and $r \in \{0,1,\dots,a-1\}$, elementary considerations show that the ratio $\frac{u(aK)}{u(aK+r)}$ is bounded by $2^\eta(1+\log 2)^{\max\{-\zeta,0\}}$ as soon as $K \geq 1$, so for any $N \geq a$, which concludes the proof.
\end{proof}

\begin{theorem}
\label{cor:convergence_with_comonotonicity_partial}
Suppose that Assumption~\ref{a:p_and_beta-Hölder} holds, that $m \in (0,1)$ is a rational, and that the cost function $c$ is supermodular, and monotone increasing in each variable. Suppose moreover that the marginal probability measures $\rho_j$ are absolutely continuous with connected supports, and that there exists exponents $\eta > 0$, $\zeta \in \R$ such that for every $j \in \{1,\dots,D\}$,
\begin{equation*}
e_{p,N}(\rho_j) \lesssim h_N := \frac{(1 + \log N)^\zeta}{N^\eta},
\end{equation*}
where $\lesssim$ hides a constant which may depend on $\rho_j$ and on the exponents $\eta,\zeta$.
Then, letting $\lambda_N = h_N^{-(p-\beta)}$, we have
\begin{equation}
|\min\F_N^m - \min\F^m| =
\O(h_N^\beta).
\end{equation}
Moreover, for any sequence of minimizers $\delta_{Y_N^*} \in \argmin \F_N^m$, $N \in \N$, every weak limit point is a minimizer of $\F^m$.
\end{theorem}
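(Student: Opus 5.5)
The proof is a direct application of Theorem~\ref{thm:general_convergence_result_partial}. That theorem gives the rate $\O(u_N^\beta)$ with $\lambda_N = u_N^{-(p-\beta)}$ whenever $u_N$ bounds the $N$-point uniform quantization error of some fixed minimizer of $\F^m$. So it suffices to exhibit a minimizer $\gamma^*$ whose quantization error is $\lesssim h_N$; rescaling $\lambda_N$ by a constant does not affect the rate.

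\textbf{Step 1: identify the minimizer.} By Lemma~\ref{lem:risk_problem_and_supermodularity} applied with $\alpha_m = m^{-1}\mathbbm{1}_{(1-m,1)}$ (here $q=\infty$ is allowed), the comonotone plan solves \eqref{eq:risk_problem}. In the partial formulation \eqref{eq:partial_risk_problem}, the corresponding optimal measure is the top-mass-$m$ part of the comonotone plan,
\[
\gamma^* = (F_{\rho_1}^{-1},\dots,F_{\rho_D}^{-1})_\#\L_{(1-m,1)},
\]
of mass $m$, with $j$-th marginal $\tilde\rho_{j,m} = (F_{\rho_j}^{-1})_\#\L_{(1-m,1)} \le \rho_j$. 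To justify optimality, take any $\gamma \in \Gamma_m(\rho_1,\dots,\rho_D)$, complete it to a probability coupling, and use that $\frac1m\int c\,d\gamma$ is bounded by the $\mathrm{CVaR}_m$ of the completed plan's $c$-pushforward. The latter is at most the value of $\mathrm{CVaR}_m$ at the comonotone plan, and by monotonicity of $c$ along the comonotone support this value equals $\frac1m\int c\,d\gamma^*$.

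\textbf{Step 2: quantize $\gamma^*$.} Apply the argument of Proposition~\ref{prop:quantization_of_comonotone_plan} to the finite measure $\gamma^*$, rescaled by $m^{-1}$ to a probability. Pairing monotonically the optimal quantizers of the marginals $m^{-1}\tilde\rho_{j,m}$ gives
\[
e_{p,N}(\gamma^*)^p \lesssim \sum_j e_{p,N}(\tilde\rho_{j,m})^p,
\]
where the normalization by $m$ only contributes constants. Lemma~\ref{lem:submeasure_quantization} then bounds each $e_{p,N}(\tilde\rho_{j,m}) \lesssim h_N$ for $N \ge a$; this uses that $\rho_j$ is absolutely continuous with connected support and that $m = a/b$ is rational. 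Finitely many small $N$ are absorbed into the constant. Hence $u_N := C h_N$ is admissible, and Theorem~\ref{thm:general_convergence_result_partial} gives both the rate and the statement about limit points.

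The main obstacle is Step 1: transferring Lemma~\ref{lem:risk_problem_and_supermodularity} to the partial formulation and identifying the minimizer of $\F^m$ as exactly the upper-tail restriction. Ties in $c$ along the support need care, and the restriction must be taken in the quantile parametrization rather than in terms of values of $c$; parametrizing by $t\in(1-m,1)$ handles this. A secondary check is that the remark on rescaling $\lambda_N$ by constants is compatible with the proof of Theorem~\ref{thm:general_convergence_result_partial}. It is, since the Young-inequality step only changes constants.
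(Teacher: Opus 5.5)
Your proposal is correct and follows the route the paper implicitly takes. You identify the upper-tail piece $(F_{\rho_1}^{-1},\dots,F_{\rho_D}^{-1})_\#\L_{(1-m,1)}$ of the comonotone plan as a minimizer of $\F^m$ via Lemma~\ref{lem:risk_problem_and_supermodularity}, quantize it through Proposition~\ref{prop:quantization_of_comonotone_plan} (after normalizing by $m$) and Lemma~\ref{lem:submeasure_quantization}, and conclude with Theorem~\ref{thm:general_convergence_result_partial}; your completion argument for the optimality of that upper-tail piece and your remark that rescaling $\lambda_N$ by a constant is harmless fill in details the paper leaves unstated.
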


\section{Numerics}

We now illustrate our discretization method with various numerical simulations using 
corresponding to the different cases dealt with in the previous section.

Each simulation is made with some fixed number of points $N$, and we compute a numerical solution of the corresponding Problem~\eqref{eq:discretized_risk_problem} or~\eqref{eq:discretized_partial_risk_problem} using the Limited-memory BFGS algorithm provided by \texttt{SciPy}. The latter, a quasi-Newton method, only requires the value of the optimized functional and that of its gradient at the current point. We have already seen that computing the value and gradient of the spectral risk measure $\Rr_\alpha(c_\#\delta_Y)$ is immediate, since this quantity is a linear combination of the $c$-values at the points of the cloud, and since the cost function $c$ is explicit. The computation of the penalty terms and of their gradients is a bit more involved, as it involves solving a semi-discrete optimal transport problem, partial or not depending on the chosen framework, for each marginal. In this paper, we fix $p=2$ and restrict ourselves to unidimensional marginals for the simulations, which considerably eases the computation related to the penalty terms.
The initial point cloud $Y_{\mathrm{init}}$ is constructed by drawing $N$ independent points from the uniform measure on the product of the supports $\spt\,\mu_j$.

\begin{remark}[General rule of thumb for the penalty coefficient]
We observe that when the penalty coefficient is qualitatively large, the projections of the numerical solution on each of the axes match well with the respective prescribed marginals $\rho_j$, but the point cloud in question lacks structure, in the sense that the input variables are poorly correlated.
On the other hand, a penalty coefficient that is qualitatively too small leads to a much clearer structure, but at the cost of a poor approximation of the marginals.
To address this issue, we start by solving the discretized problem with a reasonably small penalty coefficient, in order to get the relevant structure for the solution, and gradually increase the coefficient until the projections on the different axes are satisfyingly close to the prescribed marginals.
Although deriving a universal rule for this sequence of penalty coefficients would likely be quite delicate, this broad strategy can serve as a general rule of thumb to compute satisfying numerical solutions.
\end{remark}

\subsection{Computing the penalty terms}
Let us first give the expression of the respective gradients of the unidimensional semi-discrete optimal transport functional $\mathcal{G} : Z \in \R^N \mapsto W_2(\rho,\delta_Z)^2$ and of its partial version $\mathcal{G}^m : Z \in \R^N \mapsto W_{2,\max}(\rho,m\delta_Z)^2$, where $\rho \in \P_{2,\mathrm{ac}}(\R)$ is some fixed univariate probability density with finite variance and where $m \in (0,1)$. Once again, we write $Z = (z_1,\dots,z_N)$. Since $\mathcal{G}$ and $\mathcal{G}^m$ are only differentiable outside the generalized diagonal, we assume that the $z_i$ are pairwise distinct, so that without loss of generality $z_1 < \dots < z_N$. For the sake of conciseness, we only derive the gradient of $\mathcal{G}^m$, as the gradient of $\mathcal{G}$ can be derived similarly.
Let us consider the dual formulation
\begin{equation}
\label{eq:unidimensional_W2_dual_formulation}
\mathcal{G}^m(Z) =
\max_{\psi \in (0,+\infty)^N}
\left\{
\int_{\R}
\min\{0,\min_i [|x-z_i|^2 - \psi_i]\}
d\rho(x)
+ \frac{m}{N} \sum_{i=1}^N \psi_i
\right\},
\end{equation}
which stems from the Kantorovich duality for partial optimal transport. We refer to~\cite[Theorem~1.42]{santambrogio2015optimal} for the duality theory of standard optimal transport, and to~\cite{cances2025unidimensional} for the case of partial transport.
Note that by dominated convergence, the optimality condition on $\psi$ is that $\rho(\RLag_i(Z;\psi)) = \frac{m}{N}$ for all $i$, where
$\RLag_i(Z;\psi) = \Lag_i(Z;\psi)\cap B(x_i,\sqrt{\psi_i^*})$ is the \textit{restricted Laguerre cell} corresponding to $\psi$. Here, $\Lag_i(Z;\psi), \ i\in\{1,\dots,N\}$ denote the cells of the (standard) \textit{Laguerre tessellation} induced by $\psi$:
\begin{equation}
\label{eq:Laguerre_cell}
\Lag_i(Z;\psi) =\{x \in \R : |x-y_i|^2 -\psi_i < |x-y_k|^2 - \psi_k, \ \forall k \neq i \}.
\end{equation}
Let us denote $\RLag_i^m(\rho,Z)$ the restricted Laguerre cells induced by the (unique) optimal $\psi$ for the partial optimal transport from $\rho$ to $m\delta_Z$.
The envelope theorem then yields
\begin{align}
\label{eq:gradient_of_partial_1D_OT_functional}
\frac{\partial\mathcal{G}^m}{\partial z_i}(Z)
&=
2\int_{R\Lag_i^m(\rho,Z)}(z_i-x)d\rho(x)
\nonumber
\\
&=
\frac{2m}{N}[z_i-
b_i^m(\rho,Z)],
\end{align}
where
$b_i^m(\rho,Z)
=
\fint_{\RLag_i^m(\rho,Z)} x \, d\rho(x)
$
is the $\rho$-barycenter of the optimal restricted cell corresponding to $z_i$.
The gradient of $\mathcal{G}$ is derived in the exact same way but via the standard Kantorovich duality, and we have
\begin{align}
\label{eq:gradient_of_1D_OT_functional}
\frac{\partial\mathcal{G}}{\partial z_i}(Z)
&=
\frac{2}{N}[z_i-
b_i(\rho,Z)],
\end{align}
with $b_i(\rho,Z) = \fint_{\Lag_i(\rho,Z)}x\,d\rho(x)$ the $\rho$-barycenter of the optimal (unrestricted) Laguerre cell corresponding to $z_i$.

In the balanced case, the optimal Laguerre tessellation is simply given by the uniform decomposition of $\rho$ into $N$ equal-mass parts with respective supports mutually ordered, namely $\Lag_i(\rho,Z) = (F_\mu^{-1}(\frac{i-1}{N}),F_\mu^{-1}(\frac{i}{N}))$. Note that the latter only depends on the \textit{order} of the Dirac positions.
In the partial transport case, this is usually not the case anymore, since, broadly speaking, the ``gaps'' corresponding to the mass that is not transported will roughly be located between the pairs of consecutive cells whose associated Dirac masses are far enough from each other, see Figure~\ref{fig:1D_OT_balanced_vs_partial}.

To solve~\eqref{eq:unidimensional_W2_dual_formulation}, we use a code developed by Hugo Leclerc, in which the restricted Laguerre cells (which are intervals) are directly parameterizing by their respective endpoints. The code relies on a rather efficient heuristic that successively merges well-chosen adjacent cells in order to reduce the problem's complexity and number of variables.

\begin{figure}[!ht]
\centering
\begin{subfigure}{0.45\textwidth}
\centering
\includegraphics[width=0.75\textwidth]{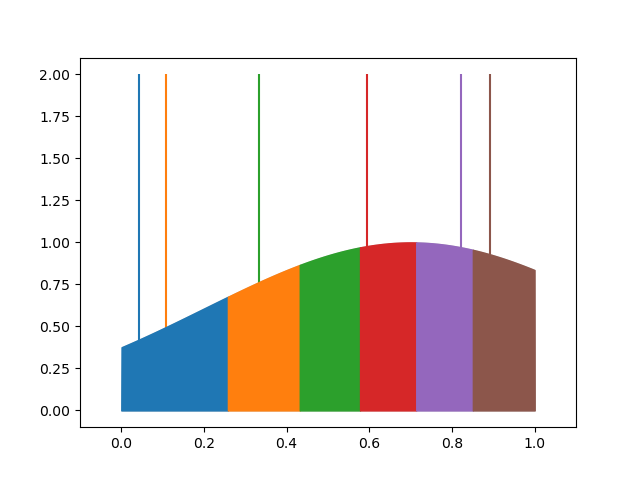}
\caption{Balanced transport}
\label{fig:1D_OT}
\end{subfigure}
\begin{subfigure}{0.45\textwidth}
\centering
\includegraphics[width=0.75\textwidth]{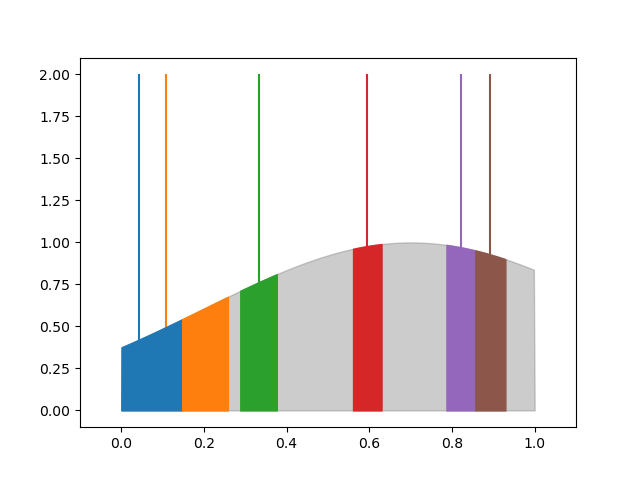}
\caption{Partial transport, with $m=\frac{1}{2}$}
\label{fig:1D_partial_OT}
\end{subfigure}
\caption{We represent the decomposition of the unidimensional probability density induced by its optimal transport to the sum of Dirac masses, whose respective positions are indicated by the vertical lines. The left-hand side corresponds to standard optimal transport, with the Dirac weights summing to one, while the right-hand corresponds to partial optimal transport, with the Dirac weights summing to $m=\frac{1}{2}$.}
\label{fig:1D_OT_balanced_vs_partial}
\end{figure}

\subsection{Squared sum of coordinates}

In~\cite[Remark~2.12]{pass2015multi}, Pass mentions a now well-known example of a cost function for which solutions of the standard multimarginal optimal transport problem with $D$ marginals may have support of dimension $D-1$. The surplus cost function in question is $c(x_1,\dots,x_D) = -|x_1+\dots+x_D|^2$, and indeed it is immediate to check that any probability measure concentrated on the plane $x_1+\dots+x_D=0$ is optimal for its marginals. In fact, fixing univariate probability measures $\mu_1,\dots,\mu_D \in \P(\R)$, Jensen's inequality yields that for any $\gamma \in \Gamma(\mu_1,\dots,\mu_D)$ we have
\[
\int_{\R^D} |x_1+\dots+x_D|^2 d\gamma
\geq
\left(\int_{\R^D} (x_1+\dots+x_D) d\gamma\right)^2
= \left(\sum_{j=1}^D \mathbb{E}\mu_j \right)^2.
\]
Hence any coupling of the $\mu_j$ that is concentrated on the hyperplane $x_1+\dots+x_D = \sum_{j} \mathbb{E}\mu_j$ must be optimal.
Our numerical simulations in Figure~\ref{fig:squared_sum} show such an optimal coupling with two-dimensional support for $\mu_1$ the uniform measure on $[0,2]$, $\mu_2$ the triangle density $\mathrm{Tri(0,1,2)}$ and $\mu_3$ the Wigner semicircle density with support $[-1,1]$. The black vector vector in the 3D graphs has base-point $(\mathbb{E}\mu_1,\mathbb{E}\mu_2,\mathbb{E}\mu_3) = (1,1,0)$ and is collinear with $(1,1,1)$.

\begin{figure}[!ht]
\centering
\begin{subfigure}{0.9\textwidth}
\centering
\includegraphics[width=0.6\textwidth]{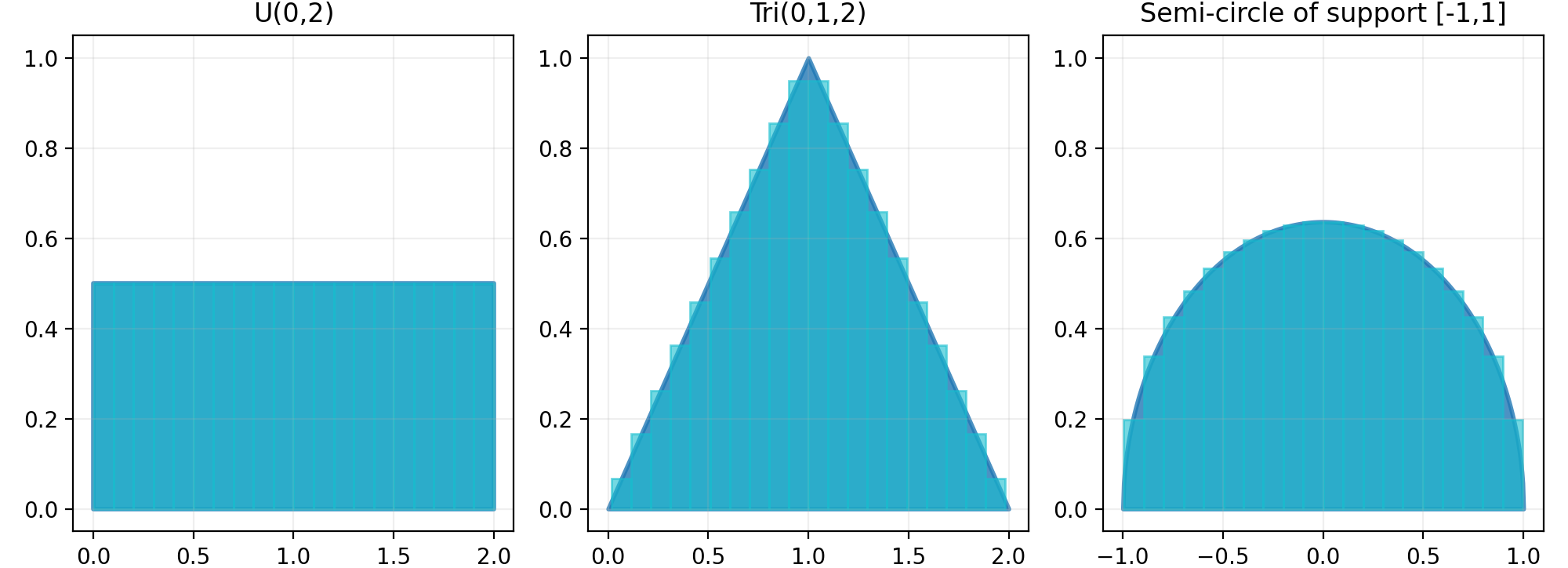}
\end{subfigure}

\bigskip
\begin{subfigure}{0.3\textwidth}
\centering
\includegraphics[width=0.7\textwidth]{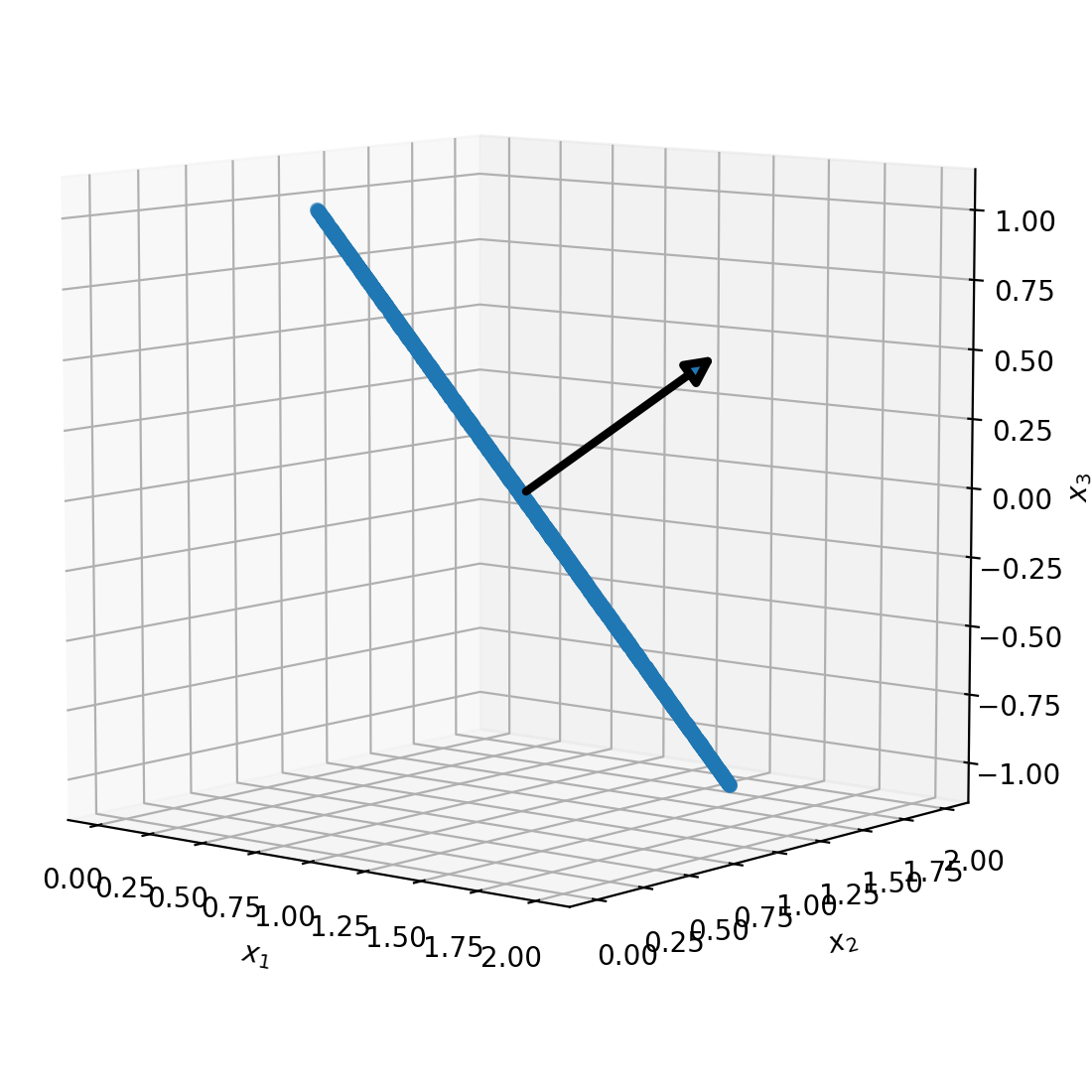}
\end{subfigure}
\begin{subfigure}{0.3\textwidth}
\centering
\includegraphics[width=0.7\textwidth]{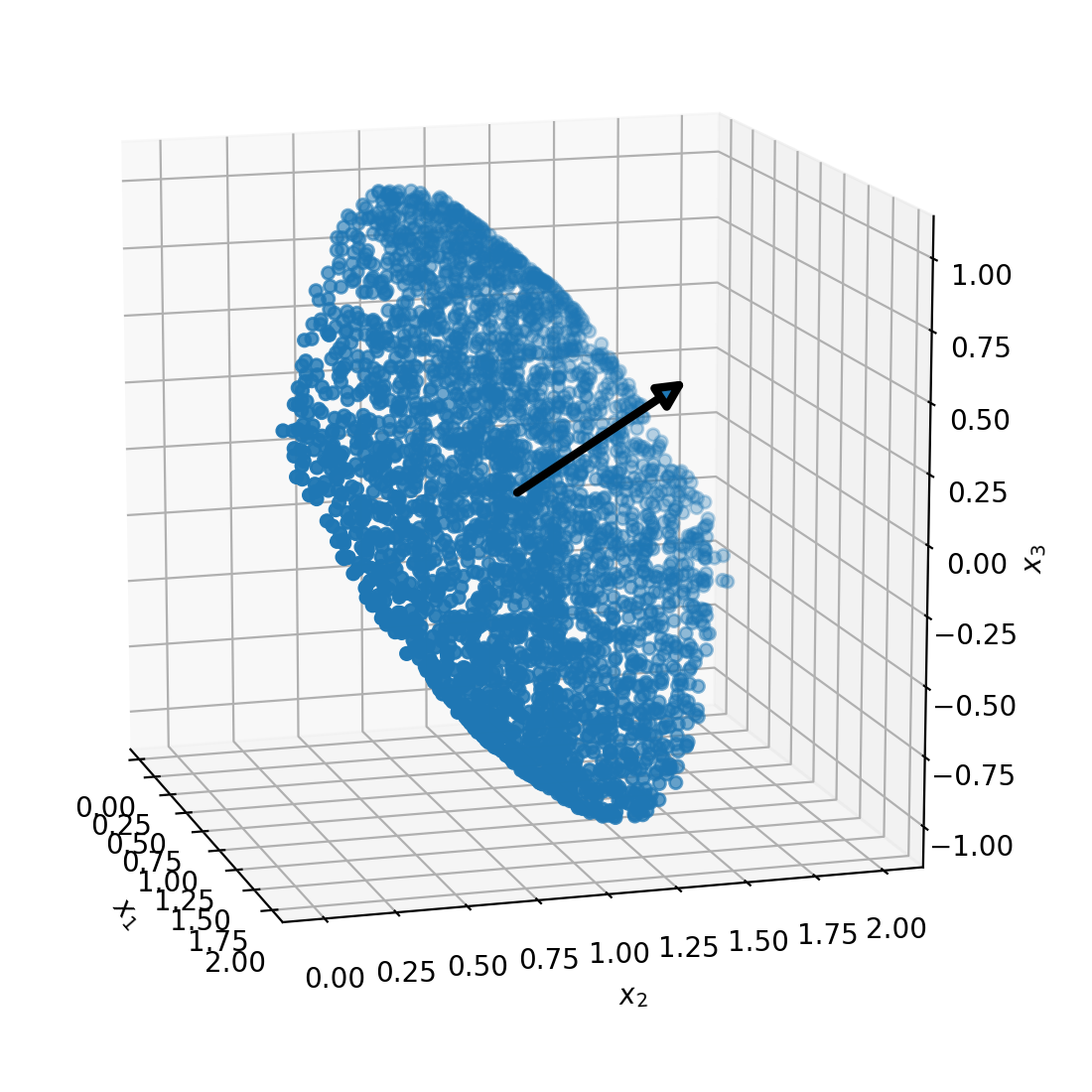}
\end{subfigure}
\begin{subfigure}{0.3\textwidth}
\centering
\includegraphics[width=0.7\textwidth]{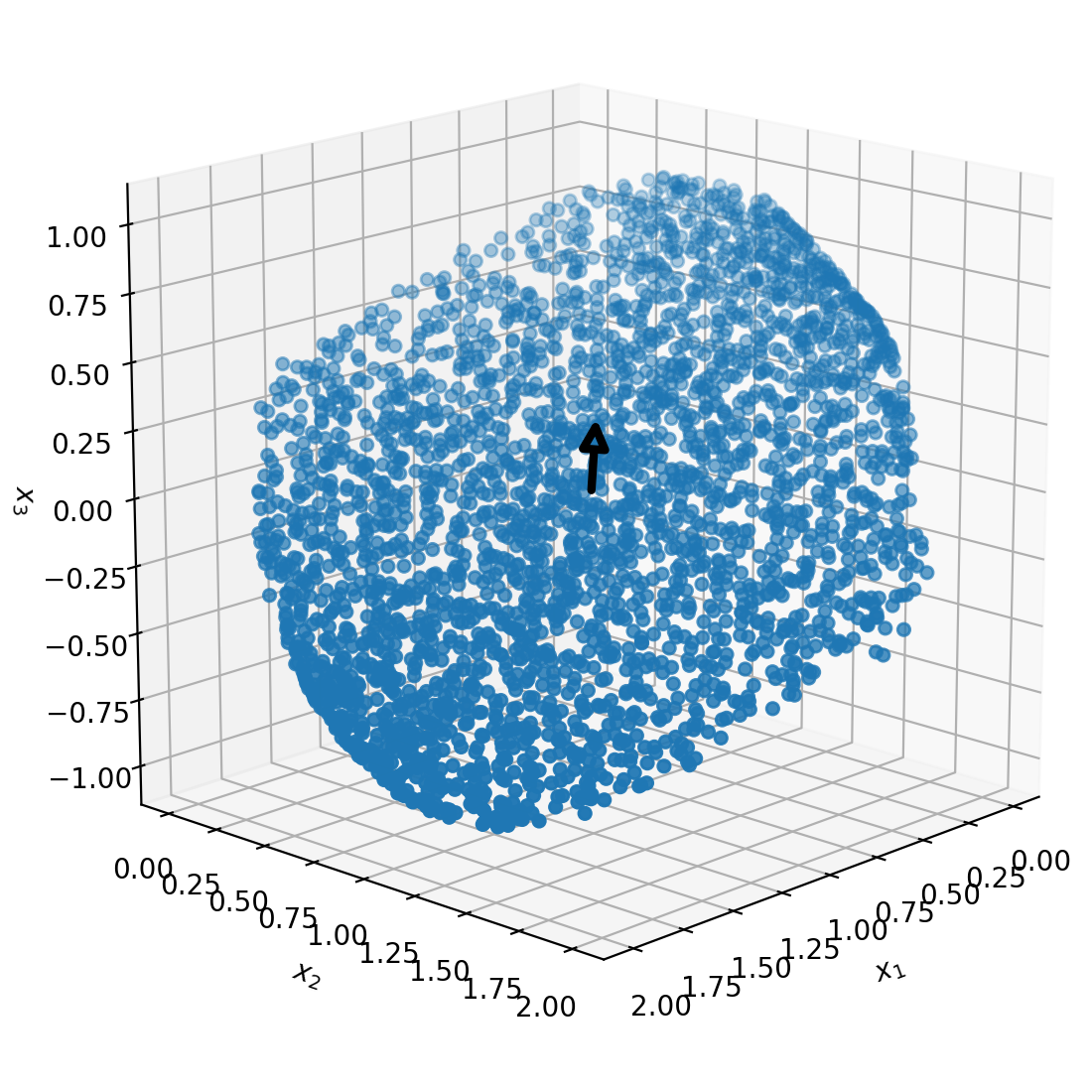}
\end{subfigure}
\caption{
Numerical solution for the standard multimarginal problem with three marginals and surplus cost $c(x_1,x_2,x_3) = -|x_1+x_2+x_3|^2$.
The three marginal densities are shown in the first row, overlapped with the respective histograms of the corresponding projections of our numerical solution. The different 3D views in the second row show that, as expected, the solution concentrates on the plane of equation $x_1+x_2+x_3 = 0$.
We used $N = 3\ 000$ points and the sequence of penalty coefficients
$\lambda \in 10^k : k = -2-,-2,\dots,4\}$.}
\label{fig:squared_sum}
\end{figure}

\subsection{Wasserstein barycenter}
The multimarginal formulation of the partial Wasserstein barycenter problem reads
\begin{equation}
\min_{\gamma \in \Gamma_m(\rho_1,\dots\rho_D)}
\int_{\R^D} \sum_{j,k=1}^D \lambda_j \lambda_j |x_j - x_k|^2 d\gamma(x).
\end{equation}

In Figure~\ref{fig:example_Brendan}, we reproduce numerically the example given by Kitagawa and Pass to prove ~\cite[Proposition~4.1]{kitagawa2015multi}. Their example illustrates non-monotonicity of the partial barycenter in its mass $m$, even for $D=2$ measures.

It turns out that our discretization does not perform well on the partial barycenter problem, as it tends to fall into nonlocal minima.
This issue is illustrated in Figures~\ref{fig:translated_pyramid} and~\ref{fig:one_to_two_pyramids}, which both correspond to the partial barycenter computation of $D=2$ measures, for the sake of simplicity.
The numerical solution in Figure~\ref{fig:translated_pyramid} is fine, since it selects the common part of both marginals, which has exactly the prescribed mass. Also, note that the underlying partial transport plan is indeed concentrated on the line $x_1=x_2$.
Figure~\ref{fig:one_to_two_pyramids} shows another example, in which we once again prescribe the mass of the barycenter to be that of the common part of the two densities. Even though the numerical solution still has has one-dimensional support, it misses some of the common mass and selects it elsewhere, yielding a strictly positive total cost.
This raises the question of the existence of a critical point as $N\to\infty$.
As mentioned above, we initialize the Limited-memory BFGS algorithm by taking a cloud of points independently drawn from the uniform measure on the product of the supports of the marginals.
Perhaps a more astute initialization would allow for better numerical solutions of our discretized partial barycenter problem, but it remains to be found.

\begin{figure}
\begin{subfigure}{0.5\textwidth}
\centering
\includegraphics[width=0.4\textwidth]{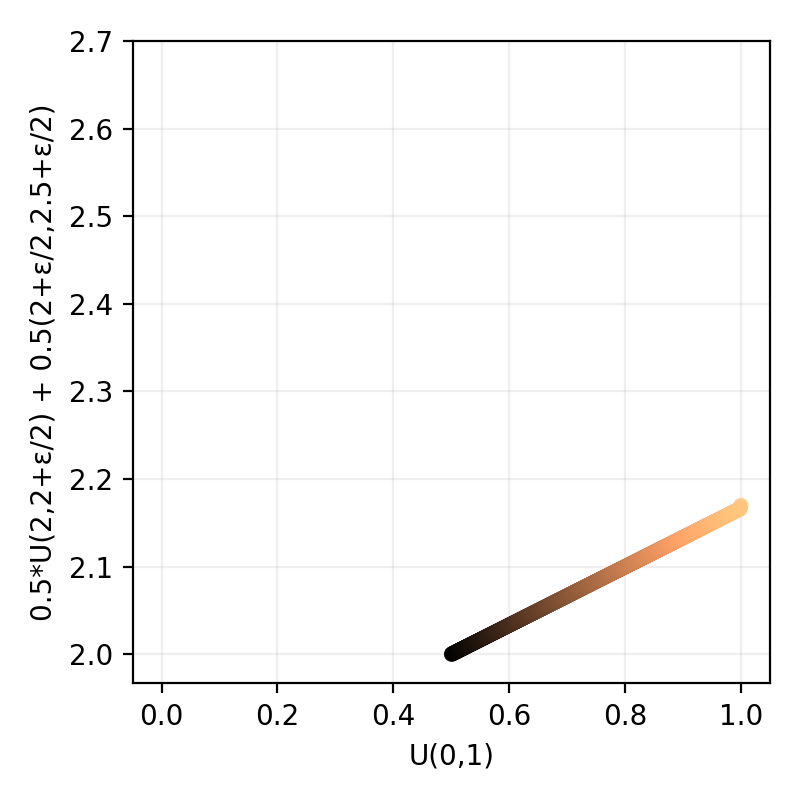}
\caption{Partial transport plan, $m=0.5$}
\end{subfigure}\hfill
\begin{subfigure}{0.5\textwidth}
\centering
\includegraphics[width=0.7\textwidth]{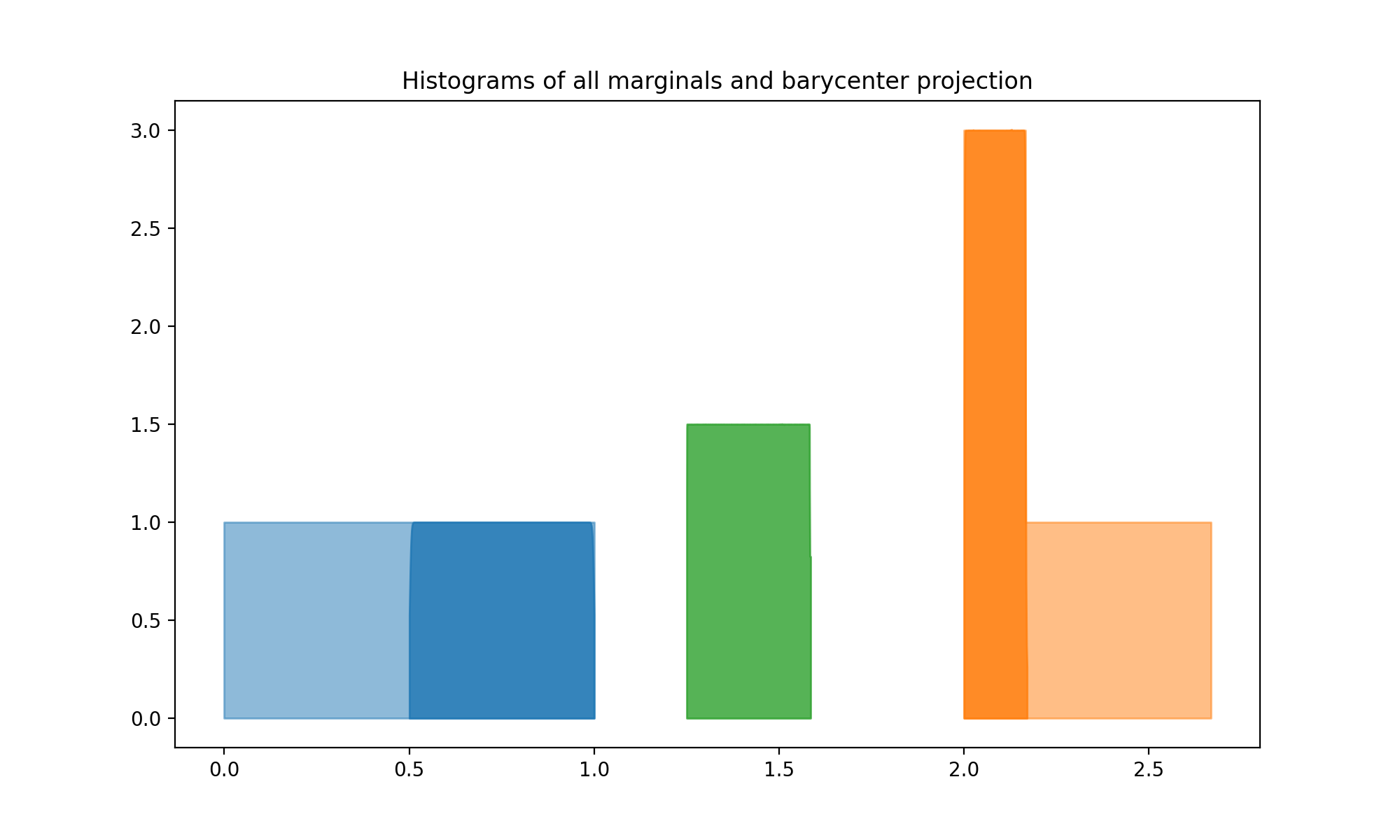}
\caption{Partial barycenter, $m=0.5$}
\end{subfigure}

\medskip
\begin{subfigure}{0.5\textwidth}
\centering
\includegraphics[width=0.4\textwidth]{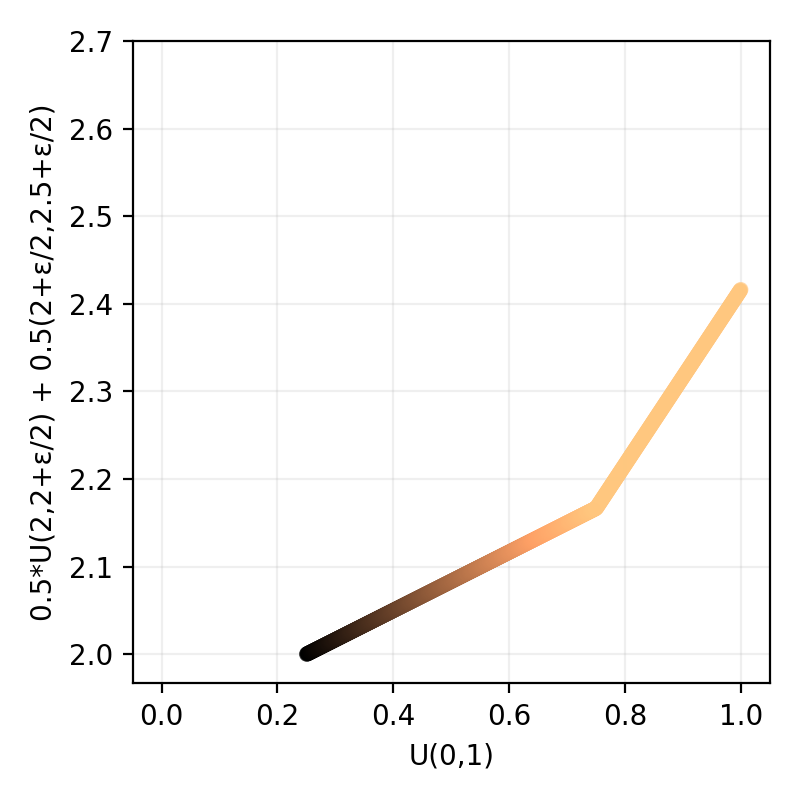}
\caption{Partial transport plan, $m=0.75$}
\end{subfigure}\hfill
\begin{subfigure}{0.5\textwidth}
\centering
\includegraphics[width=0.7\textwidth]{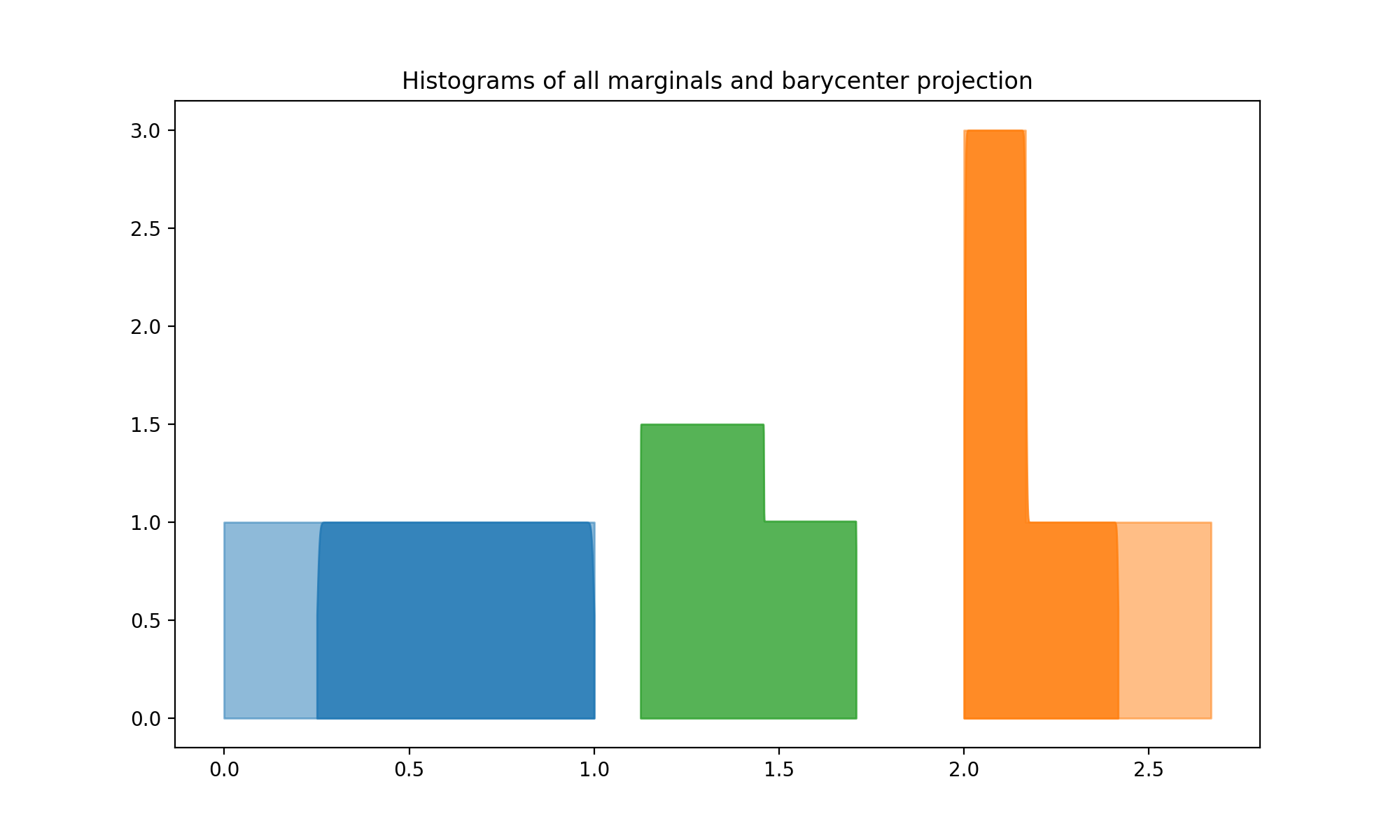}
\caption{Partial barycenter, $m=0.75$}
\end{subfigure}
\caption{
Numerical simulation for Kitagawa and Pass' example in the proof of~\cite[Proposition~4.1]{kitagawa2015multi}, which shows non-monotonicity of the partial barycenter (in green) in its mass $m$.
The blue and orange densities correspond to the two probability marginals, with their active parts darkened.
Both for these active parts and for the partial barycenter, we use a simple kernel density estimation to convert the corresponding point clouds obtained numerically into densities.
The value $\eps$ involved in the marginal densities defined by the aforementioned authors was set to $\frac{1}{3}$.
We used $N = 1\ 000$ points, equal weights $\lambda_1 = \lambda_2 = \frac{1}{2}$, and the sequence of penalty coefficients
$\lambda \in \{10^k : k = -1-,0,\dots,4\}$.
}
\label{fig:example_Brendan}
\end{figure}

\begin{figure}[!ht]
\centering
\begin{subfigure}{0.3\textwidth}
\centering
\includegraphics[width=0.9\textwidth]{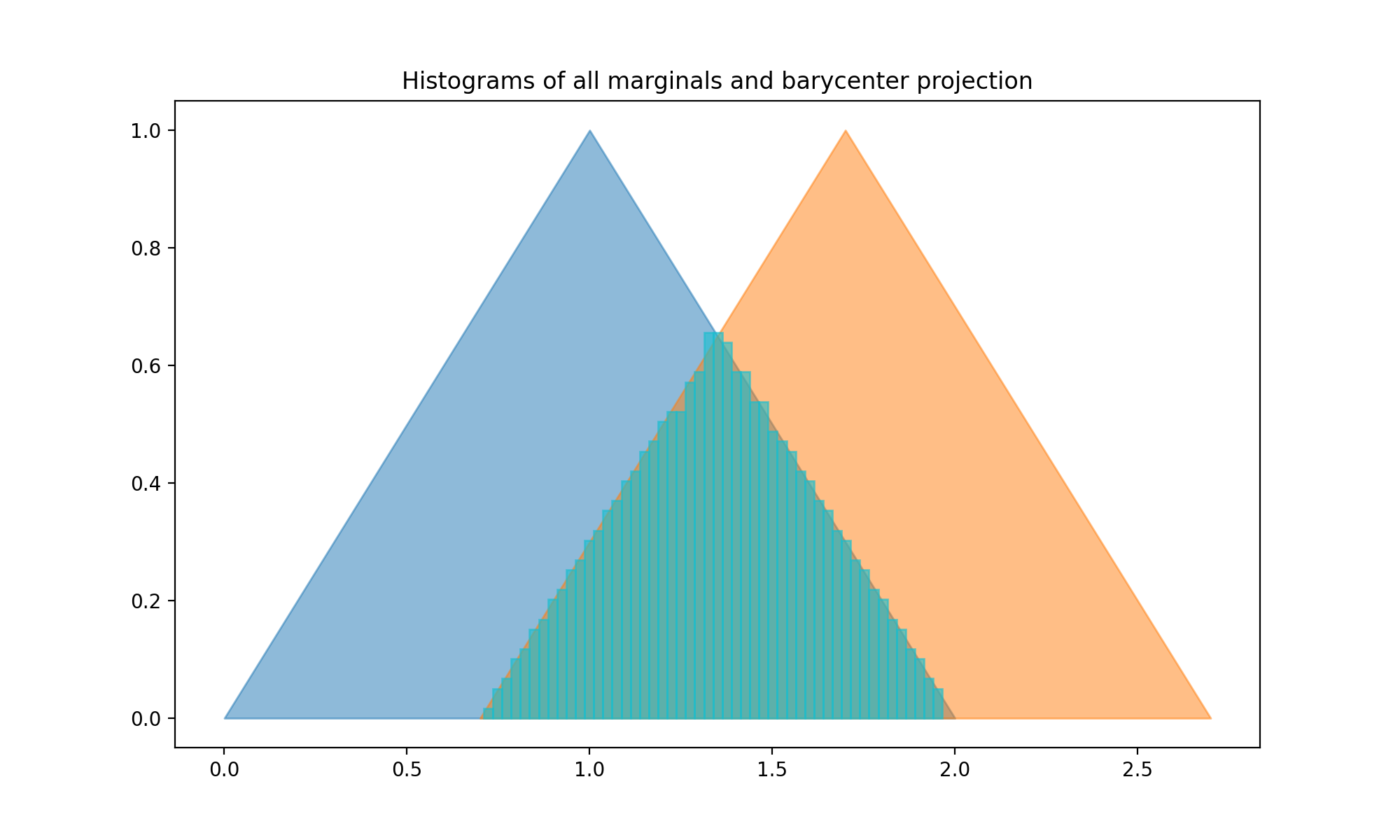}
\caption{Partial barycenter}
\label{fig:translated_pyramid-bary}
\end{subfigure}
\begin{subfigure}{0.3\textwidth}
\centering
\includegraphics[width=0.8\textwidth]{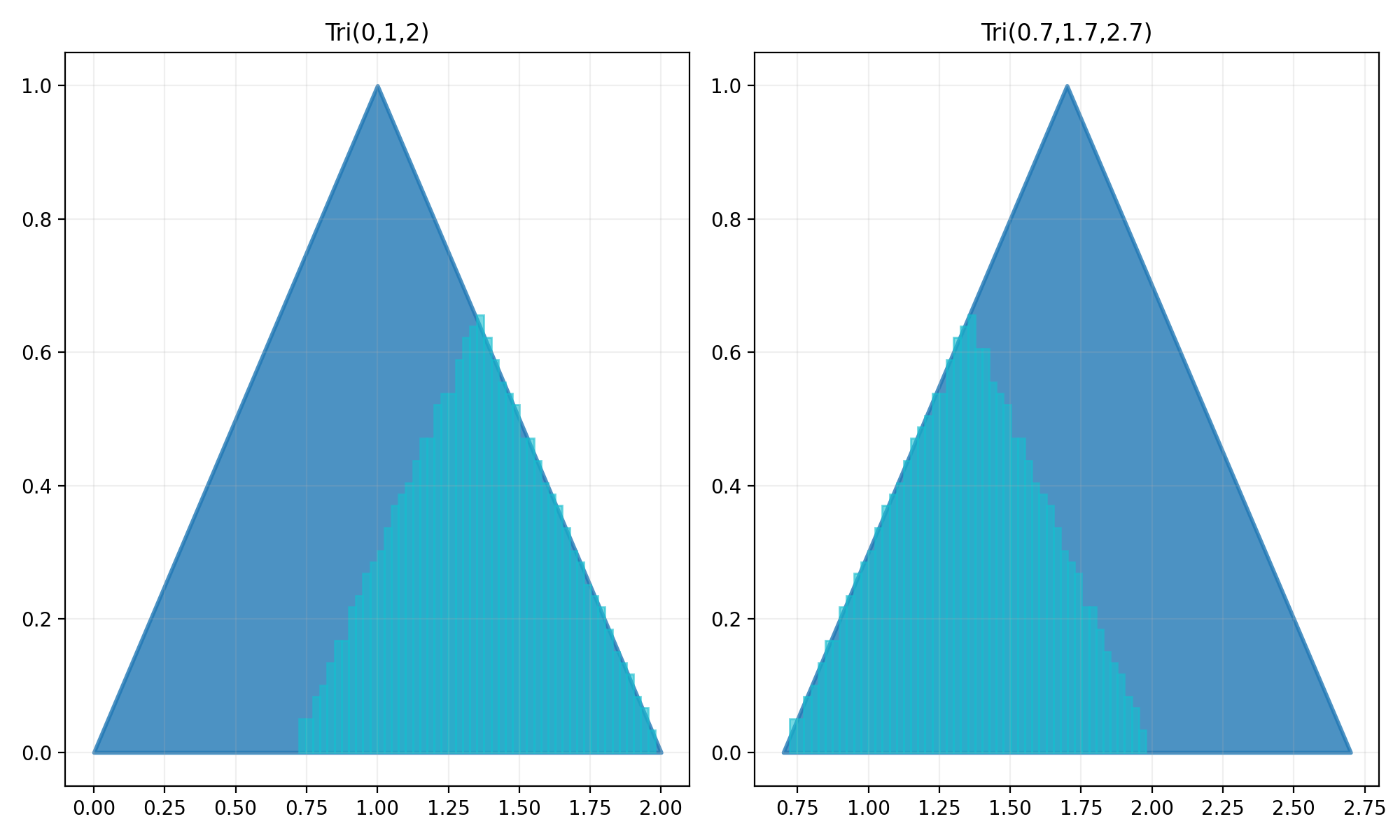}
\caption{Active submeasures}
\label{fig:translated_pyramid-histograms}
\end{subfigure}
\begin{subfigure}{0.3\textwidth}
\centering
\includegraphics[width=0.5\textwidth]{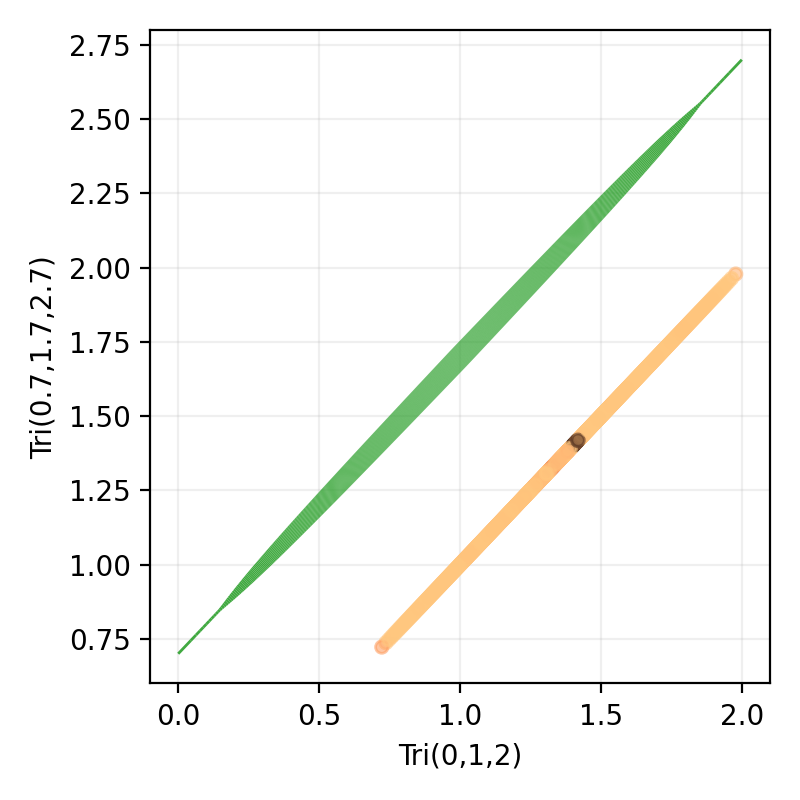}
\caption{Partial transport plan}
\label{fig:translated_pyramid-plan}
\end{subfigure}
\caption{
Numerical solution for the multimarginal formulation of the partial barycenter problem between the triangle density $\mathrm{Tri}(0,1,2)$ and its translation by $0.7$. We used $N = 1\ 000$ points, mass $m = 0.4225$, equal weights $\lambda_1 = \lambda_2 = \frac{1}{2}$, and the sequence of penalty coefficients
$\lambda \in \{ 10^k : k = -3-,-2,\dots,2\}$. The chosen mass is exactly equal to the amount of common mass of the two marginal probability measures. The orange point cloud in~\ref{fig:translated_pyramid-plan} represents the partial transport plan we find whereas the green line is the standard optimal transport plan between the two marginals.}
\label{fig:translated_pyramid}
\end{figure}

\begin{figure}[!ht]
\centering
\begin{subfigure}{0.3\textwidth}
\centering
\includegraphics[width=0.9\textwidth]{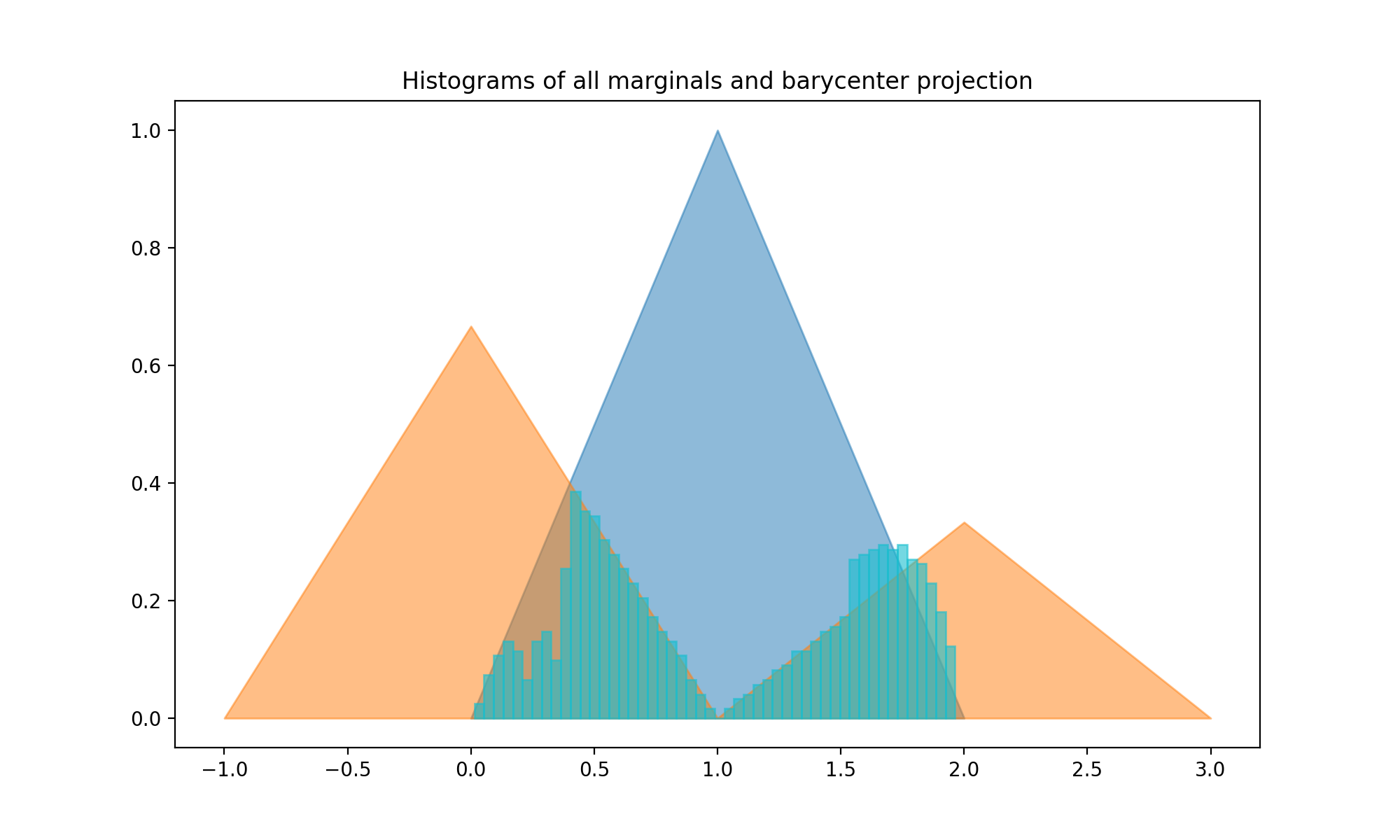}
\caption{Partial barycenter}
\label{fig:one_to_two_pyramids-bary}
\end{subfigure}
\begin{subfigure}{0.3\textwidth}
\centering
\includegraphics[width=0.9\textwidth]{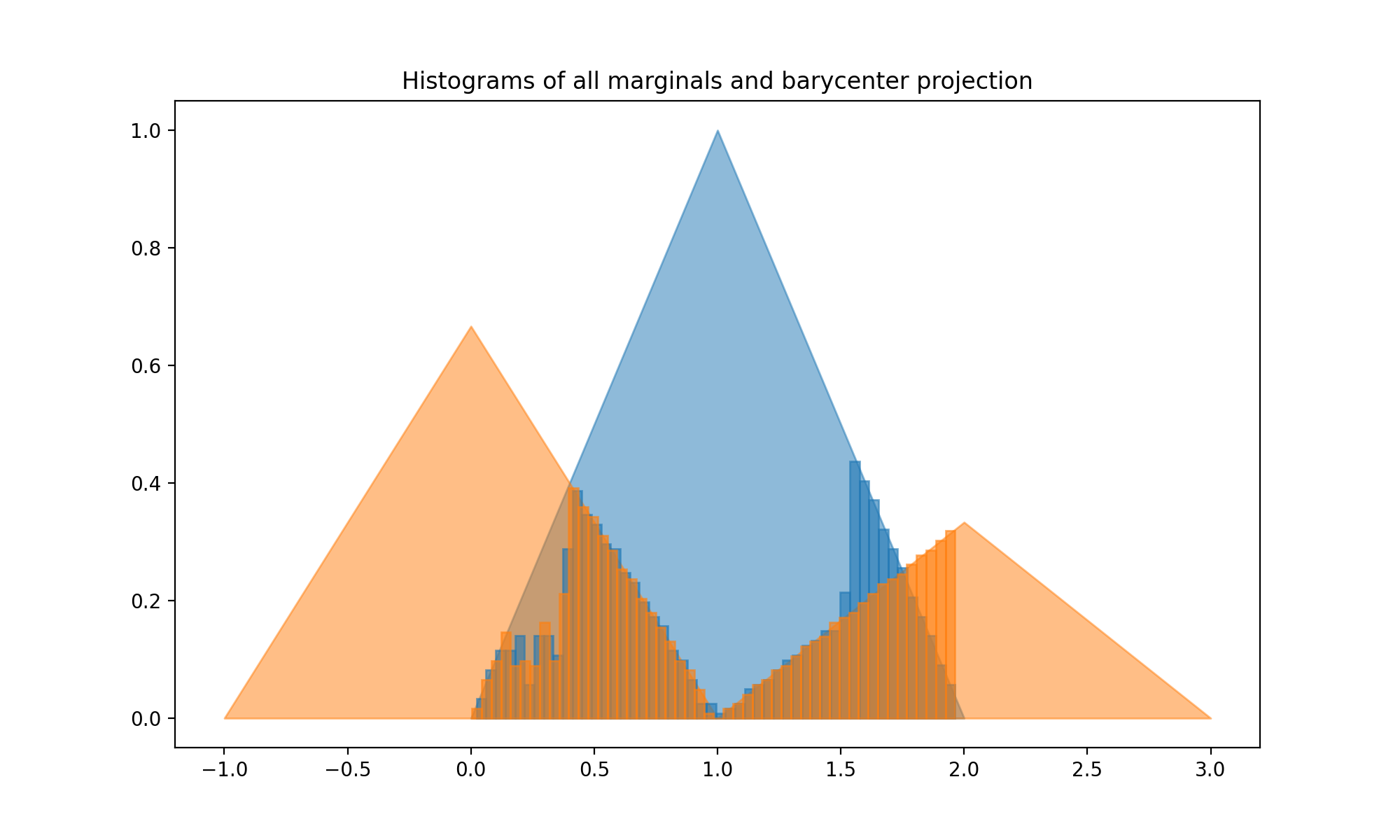}
\caption{Active submeasures}
\label{fig:one_to_two_pyramids-histograms}
\end{subfigure}
\begin{subfigure}{0.3\textwidth}
\centering
\includegraphics[width=0.5\textwidth]{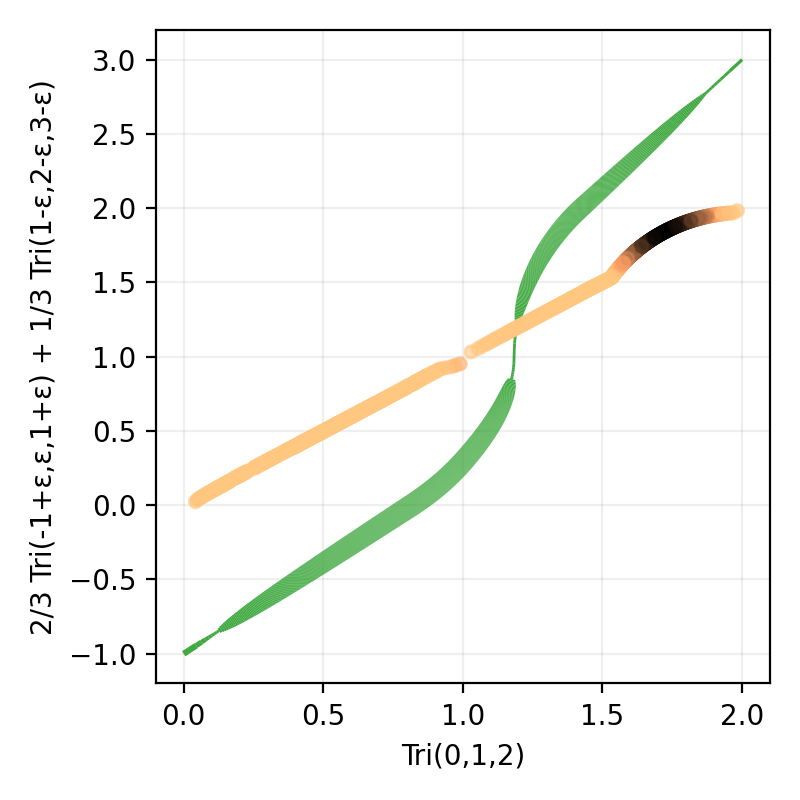}
\caption{Partial transport plan}
\label{fig:one_to_two_pyramids-plan}
\end{subfigure}
\caption{Numerical solution for the multimarginal formulation of the partial barycenter problem between the triangle density $\mathrm{Tri}(0,1,2)$ and the density $\frac{2}{3}\mathrm{Tri}(-1,0,1) + \frac{1}{3}\mathrm{Tri}(1,2,3)$. We used $N = 1\ 000$ points, mass $m = 0.4225$, equal weights $\lambda_1 = \lambda_2 = \frac{1}{2}$, and the sequence of penalty coefficients
$\lambda \in 10^k : k = -3-,-2,\dots,6\}$. The chosen mass is exactly equal to the amount of common mass of the two marginal probability measures, and in particular, we see that the numerical solution is far from optimal.
}
\label{fig:one_to_two_pyramids}
\end{figure}

\subsection{Repulsive cost}

The standard multimarginal optimal transport problem with Coulomb cost $c$ and univariate marginal $\rho \in \mathcal{P}(\R)$ reads
\begin{equation}
\label{eq:DFT_problem}
\min_{\gamma\in\Gamma^D(\rho)}
\int_{\R^D} c(x_1,\dots,x_D) d\gamma(x_1,\dots,x_D),
\end{equation}
where
\begin{equation}
c(x_1,\dots,x_D) =
\sum_{1\leq j < k \leq D}
\frac{1}{|x_j-x_k|}
\end{equation}
defines the Coulomb cost, and $\Gamma^D(\rho)$ is the set of probability measures on $\R^D$ whose respective projections on each of the $D$ axes are all equal to $\rho$.
This problem arises naturally in density functional theory. It is well known that thanks to the equal marginals, to linearity of the objective, and to symmetry of the cost with respect to permutation of the arguments, at least one solution $\gamma \in \Gamma^D(\rho)$ of Problem~\eqref{eq:DFT_problem} is \textit{symmetric}, in the sense that it is invariant with respect to permutation of the axes.

The spectral risk measure version of Problem~\eqref{eq:DFT_problem} reads
\begin{equation}
\label{eq:risk_DFT_problem}
\max_{\gamma\in\Gamma^D(\rho)}
\Rr_\alpha((-c)_\#\gamma),
\end{equation}
whereas the partial transport version reads
\begin{equation}
\label{eq:partial_DFT_problem}
\min_{\gamma\in\Gamma^D_m(\rho)}
\int c \, d\gamma,
\end{equation}
where $\Gamma^D_m(\rho)$ is the set of positive measures of mass $m$ on $\R^D$ whose respective projections on each of the $D$ axes are all dominated by $\rho$.
We emphasize that these two problems are purely artificial, as they have no physical motivation.
Nonetheless, they make for a good illustration of our discretization method in the case of a repulsive cost function.

\begin{remark}
For numerics, we replaced the pairwise interactions $|x_j-x_k|^{-1}$ in the Coulomb cost $c$ by $(1+|x_j-x_k|)^{-1}$, in order to keep the cost function bounded.
\end{remark}

Figure~\ref{fig:DFT-CVaR_simulations} shows the numerical solutions obtained for the Lagrangian discretization of Problem~\eqref{eq:partial_DFT_problem} with mass $m = 0.5$ and uniform marginal $\rho = \L_{(0,1)}$, for various numbers $D$ of marginals.
One can see that, up to restricting to the active submeasures, the solution has exactly the Monge type cyclical structure described in~\cite[Theorem~1.1]{colombo2015multimarginal} by Colombo et al. This is not surprising, as any symmetric solution $\gamma$ of Problem~\eqref{eq:partial_DFT_problem} has equal active measures $\pi^j_\#\gamma$ and solves the standard Coulomb cost problem with marginal $\tilde{\rho} = \pi^1_\#\gamma$, for which the result in question holds.
If the dependence structure of the $D$ identical active submeasures is well understood in light of~\cite{colombo2015multimarginal}, we are not aware of any characterization of this common active submeasure.

In Figure~\ref{fig:DFT-quadratic_simulations}, we present the numerical solutions for Problem~\eqref{eq:risk_DFT_problem}, once again with marginal $\rho = \L_{(0,1)}$, but this time for the spectral value at risk induced by the quadratic spectral function $\alpha(t) = (t+\eta)^2-\eta^2$ (in particular, no mass is left out). The offset $\eta=0.1$ ensures that that nonzero derivative at $t=0$. Of course, since the objective function is concave, the same argument as for the standard (linear) problem implies that there is always a solution that is symmetric. In that respect, the symmetries of each of the nine point clouds is of little surprise.
But interestingly, the numerical simulations strongly suggests that solutions have one-dimensional supports.

\begin{figure}
\begin{subfigure}{0.33\textwidth}
\includegraphics[width=\textwidth]{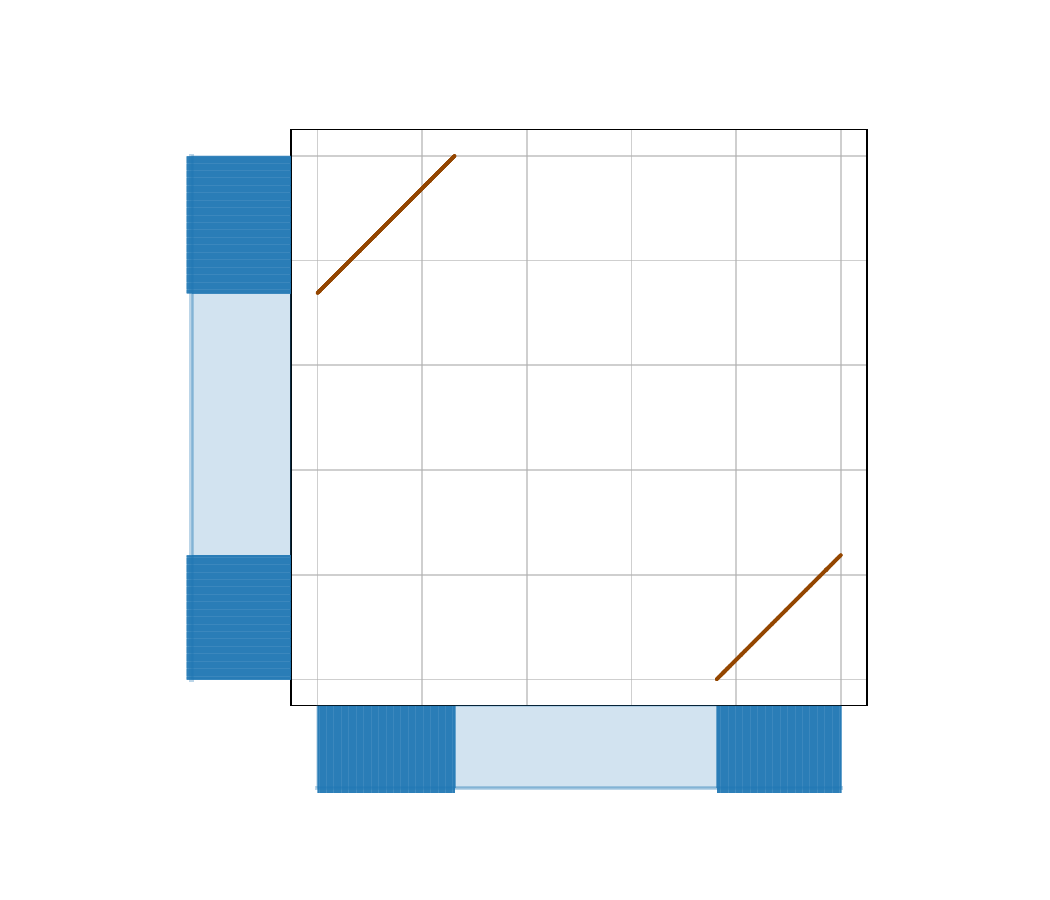}
\caption{$D = 2$}
\label{subfig:DFT-CVaR-D2}
\end{subfigure}\hfill
\begin{subfigure}{0.33\textwidth}
\includegraphics[width=\textwidth]{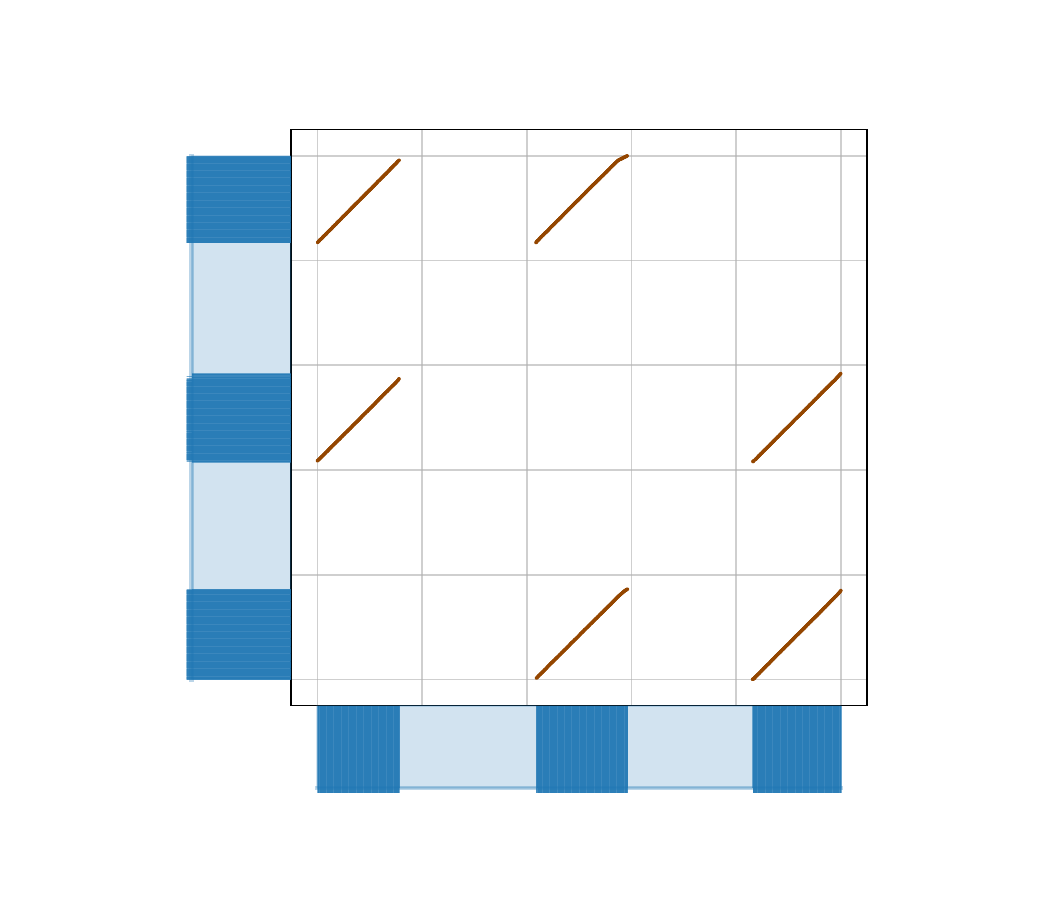}
\caption{$D = 3$}
\label{subfig:DFT-CVaR-D3}
\end{subfigure}\hfill
\begin{subfigure}{0.33\textwidth}
\includegraphics[width=\textwidth]{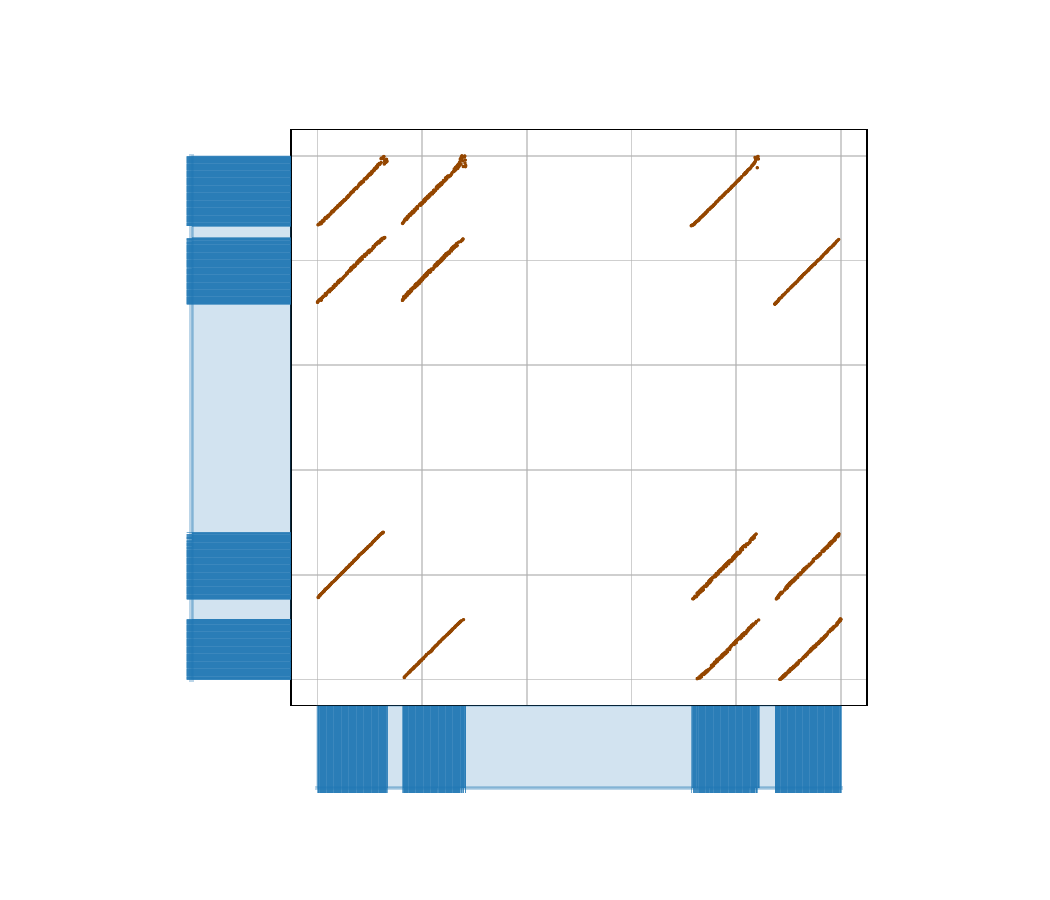}
\caption{$D = 4$}
\label{subfig:DFT-CVaR-D4}
\end{subfigure}

\bigskip
\begin{subfigure}{0.33\textwidth}
\includegraphics[width=\textwidth]{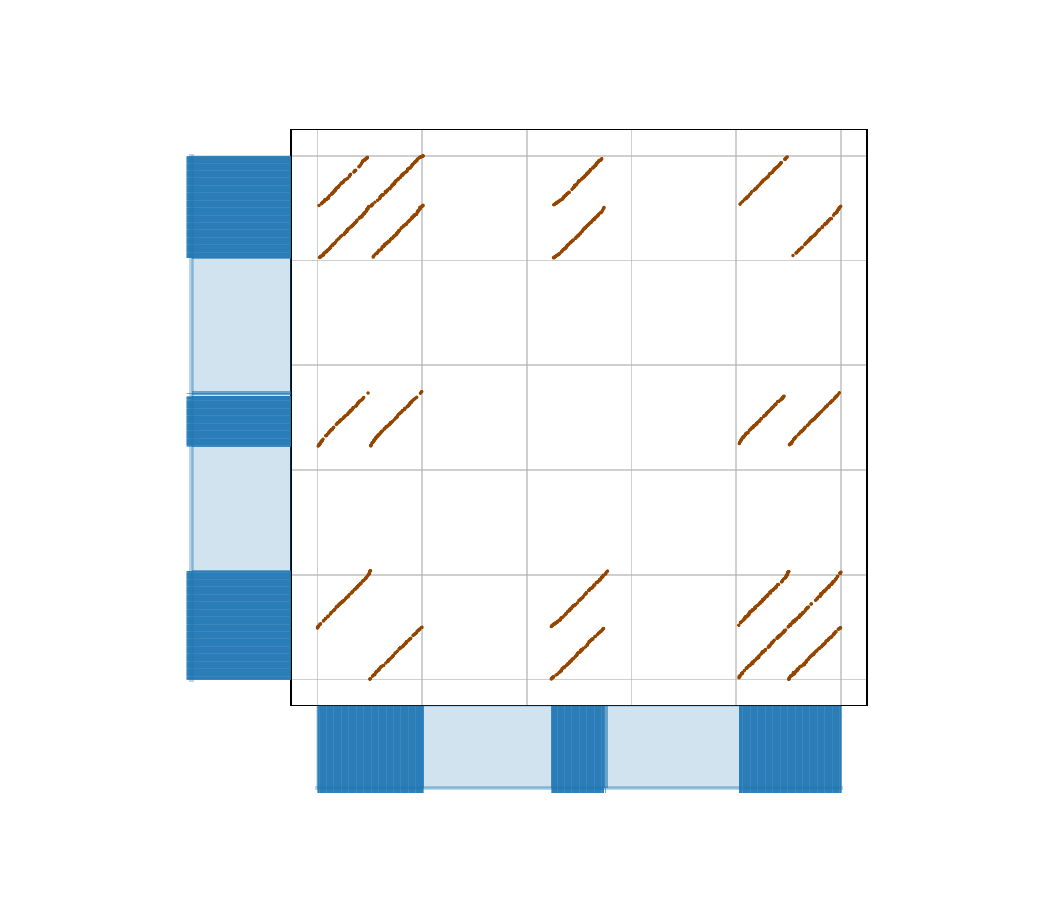}
\caption{$D = 5$}
\label{subfig:DFT-CVaR-D5}
\end{subfigure}\hfill
\begin{subfigure}{0.33\textwidth}
\includegraphics[width=\textwidth]{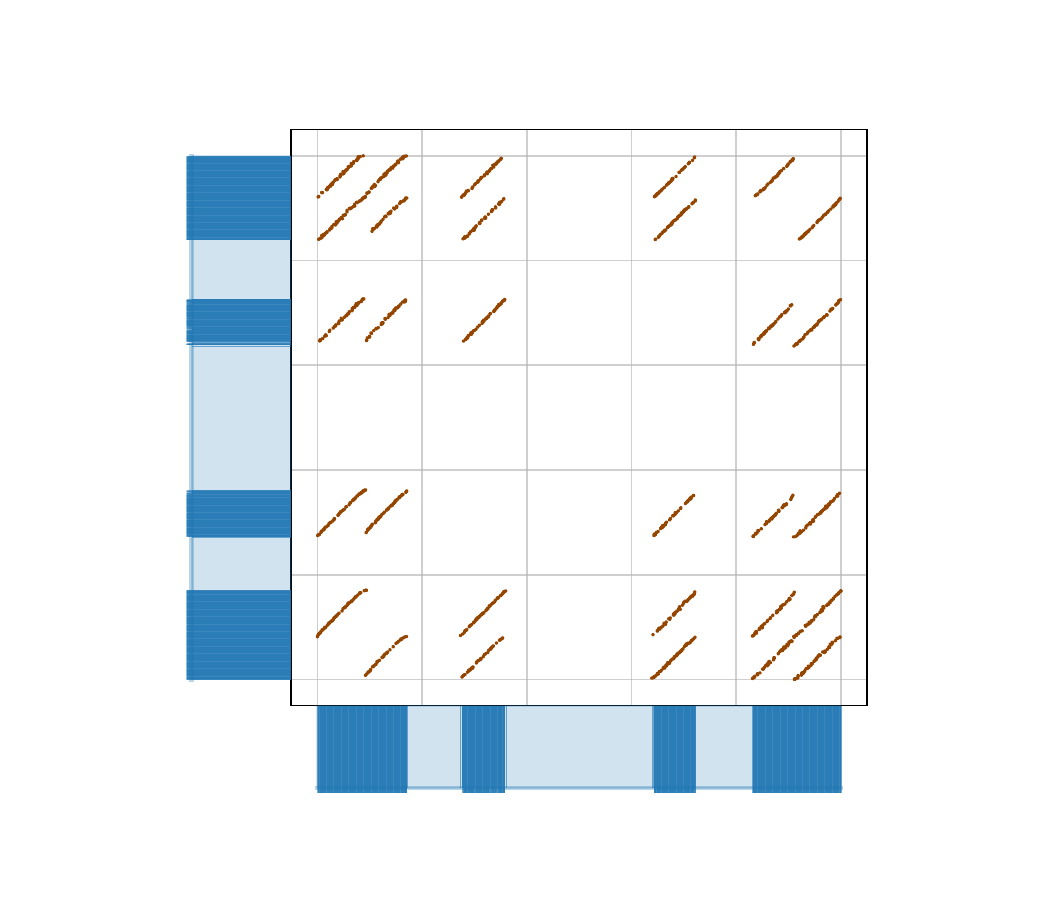}
\caption{$D = 6$}
\label{subfig:DFT-CVaR-D6}
\end{subfigure}\hfill
\begin{subfigure}{0.33\textwidth}
\includegraphics[width=\textwidth]{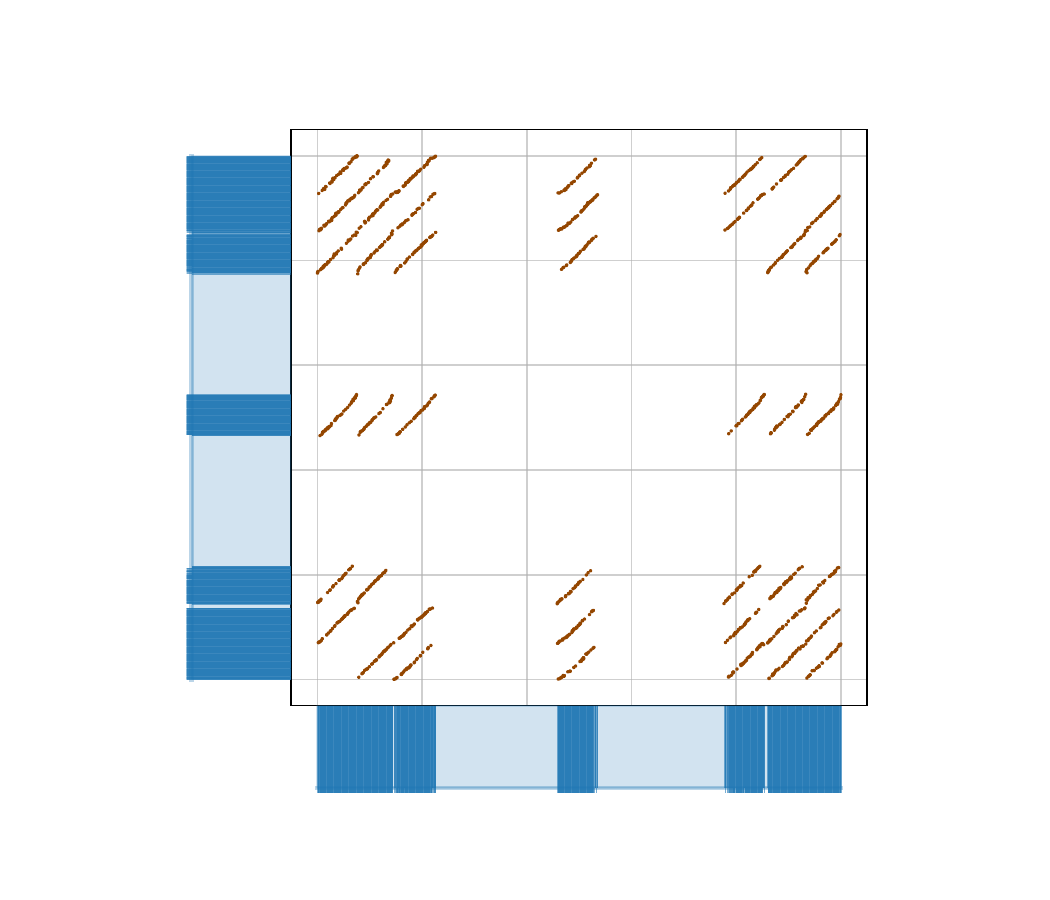}
\caption{$D = 7$}
\label{subfig:DFT-CVaR-D7}
\end{subfigure}
\caption{Projections on the first two axes of the numerical solutions found for Problem~\eqref{eq:risk_DFT_problem}, with $\rho = \L_{(0,1)}$ the uniform measure on $[0,1]$ and $\alpha \propto \mathbbm{1}_{(1-m,1)}$ the spectral function corresponding to $\mathrm{CVaR}_m$. Each subfigure corresponds to a different number $D$ of marginals.
We used $N = 1\ 500$ points, mass $m =0.5$, and the sequence of penalty coefficients $\lambda \in \{10^k : k=-3,-2,\dots,6 \}$.
The blue area at the bottom and left sides of each figure represent the active submeasures.
}
\label{fig:DFT-CVaR_simulations}
\end{figure}

\begin{figure}
\begin{subfigure}{0.29\textwidth}
\includegraphics[width=\textwidth]{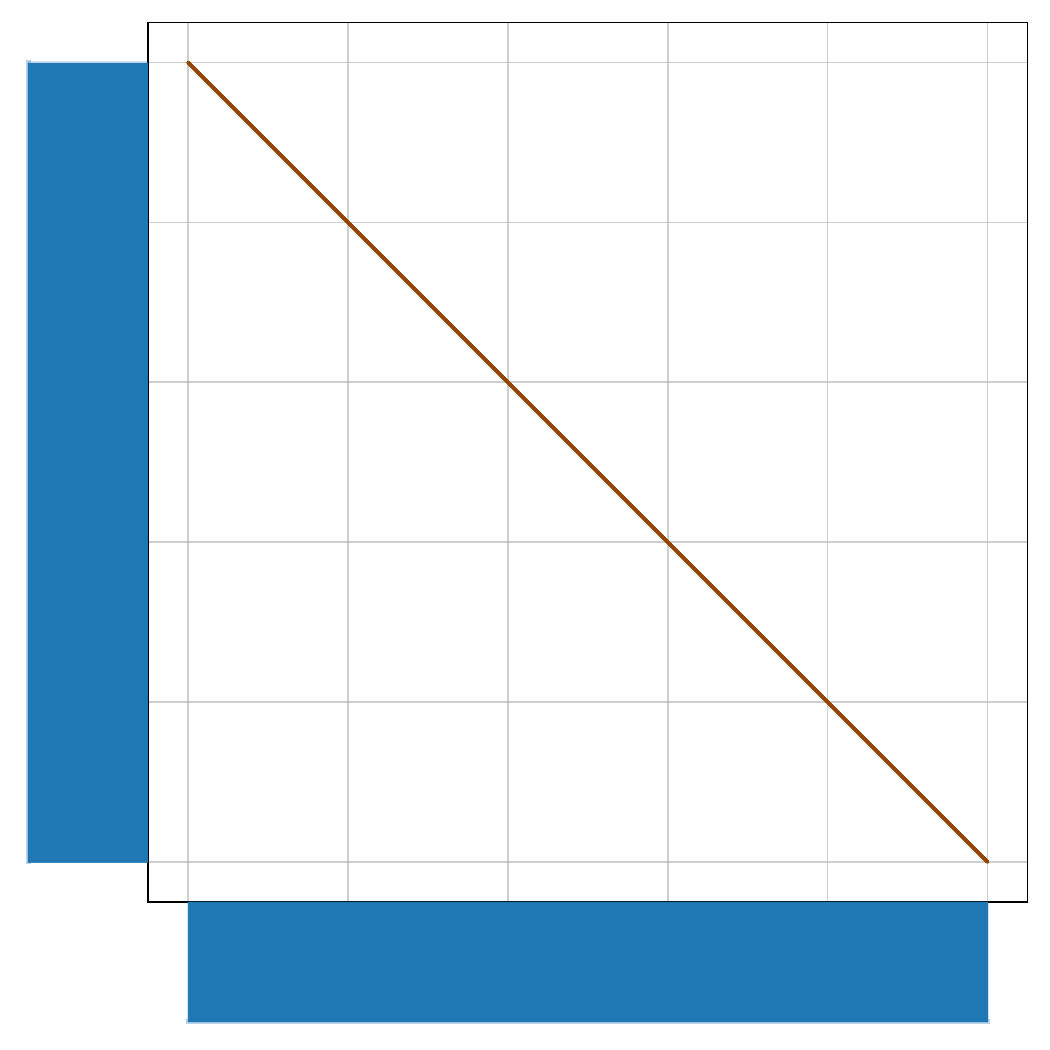}
\caption{$D = 2$}
\end{subfigure}
\hfill
\begin{subfigure}{0.29\textwidth}
\includegraphics[width=\textwidth]{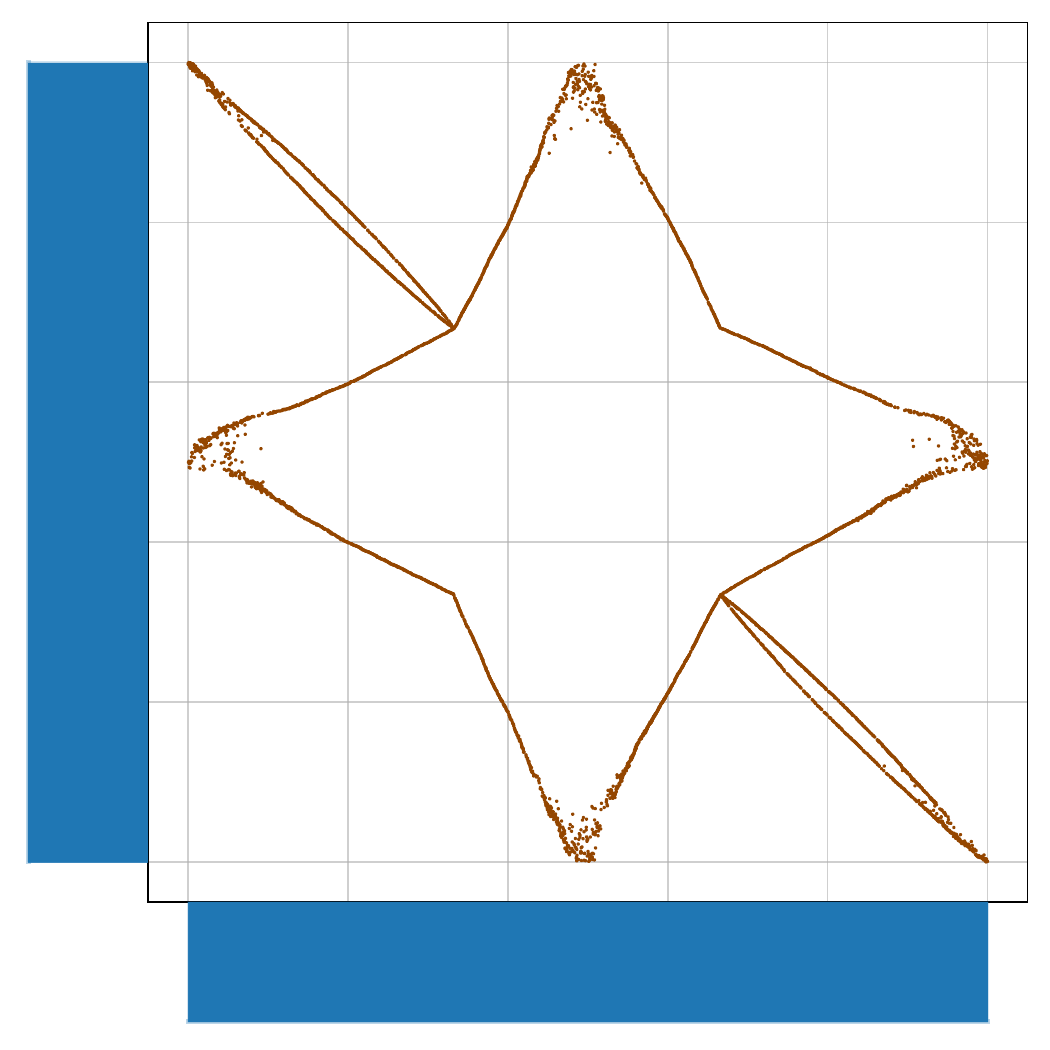}
\caption{$D = 3$}
\end{subfigure}
\hfill
\begin{subfigure}{0.29\textwidth}
\includegraphics[width=\textwidth]{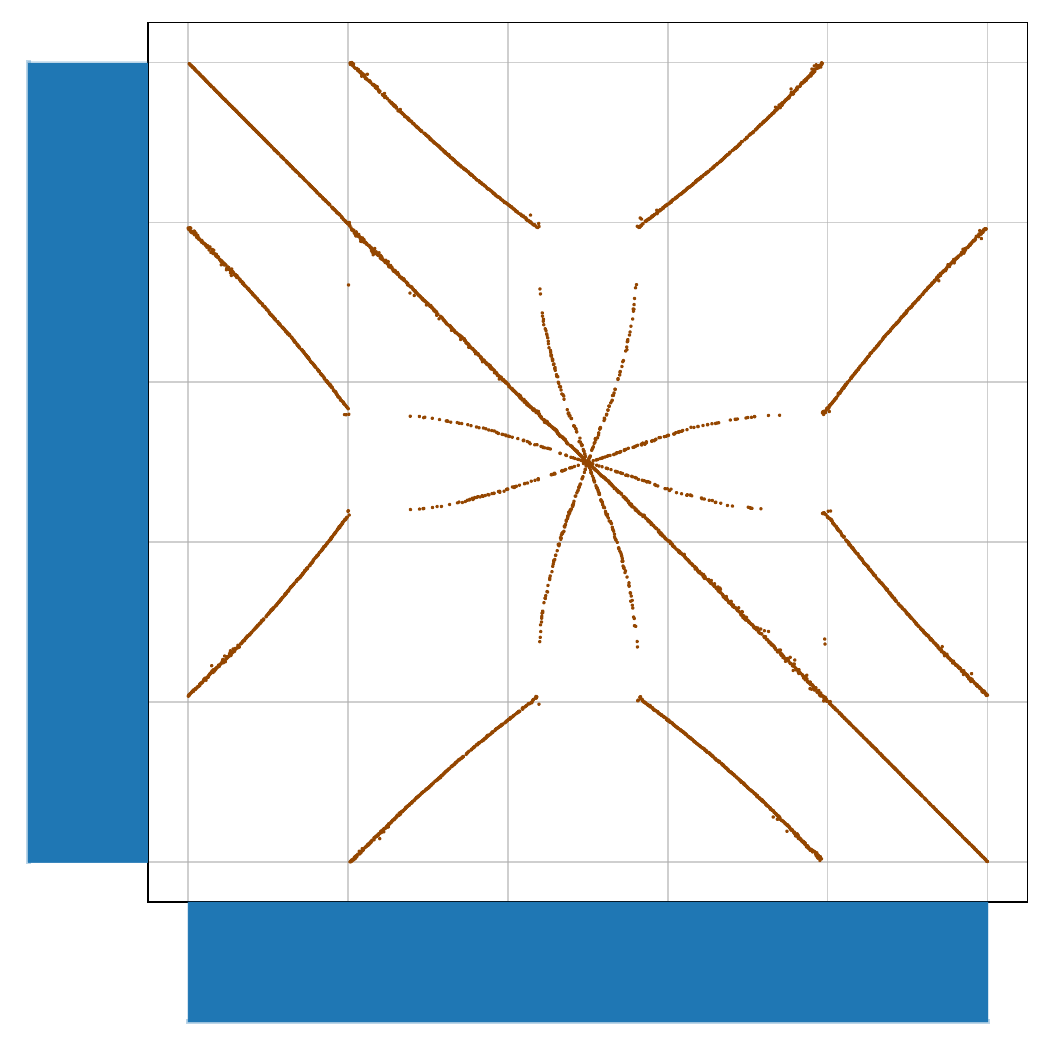}
\caption{$D = 4$}
\end{subfigure}

\medskip
\begin{subfigure}{0.29\textwidth}
\includegraphics[width=\textwidth]{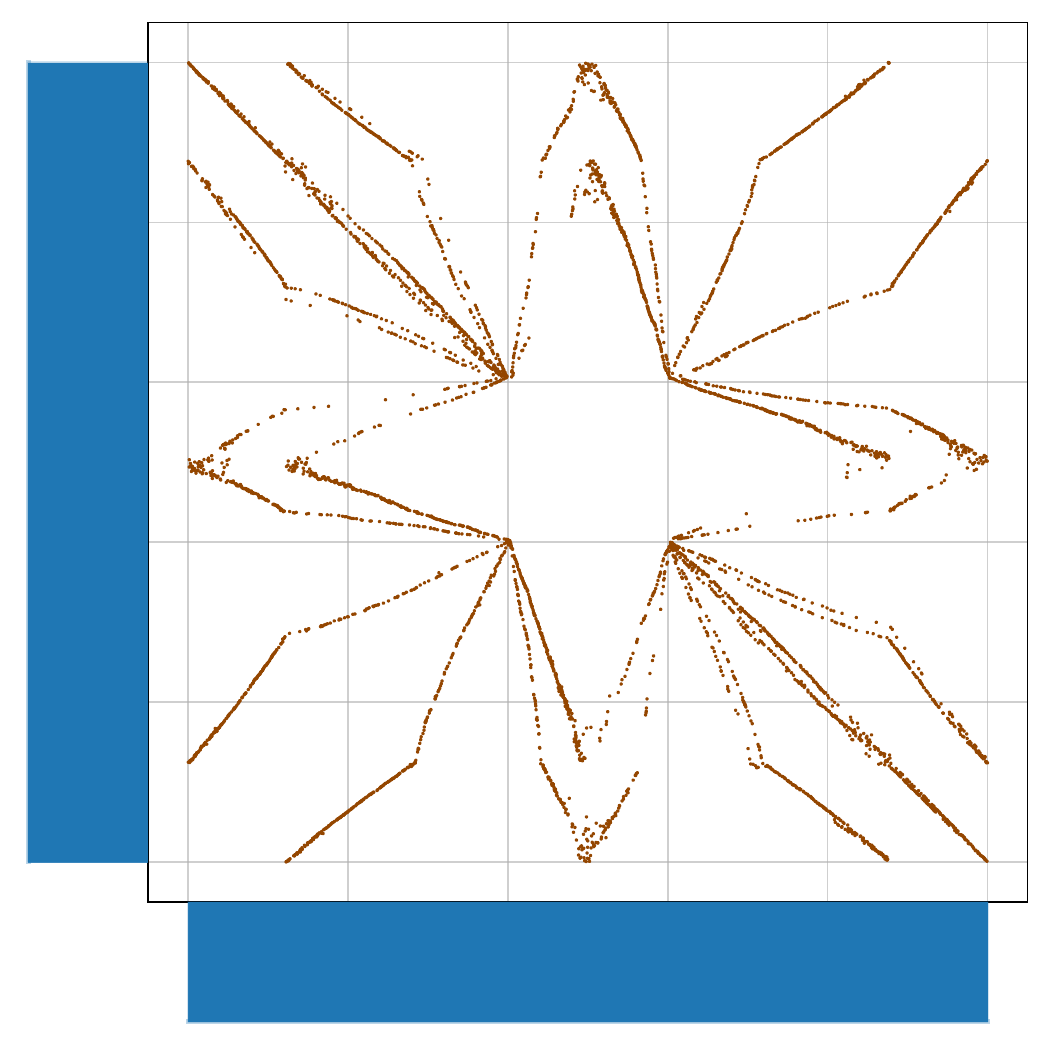}
\caption{$D = 5$}
\end{subfigure}
\hfill
\begin{subfigure}{0.29\textwidth}
\includegraphics[width=\textwidth]{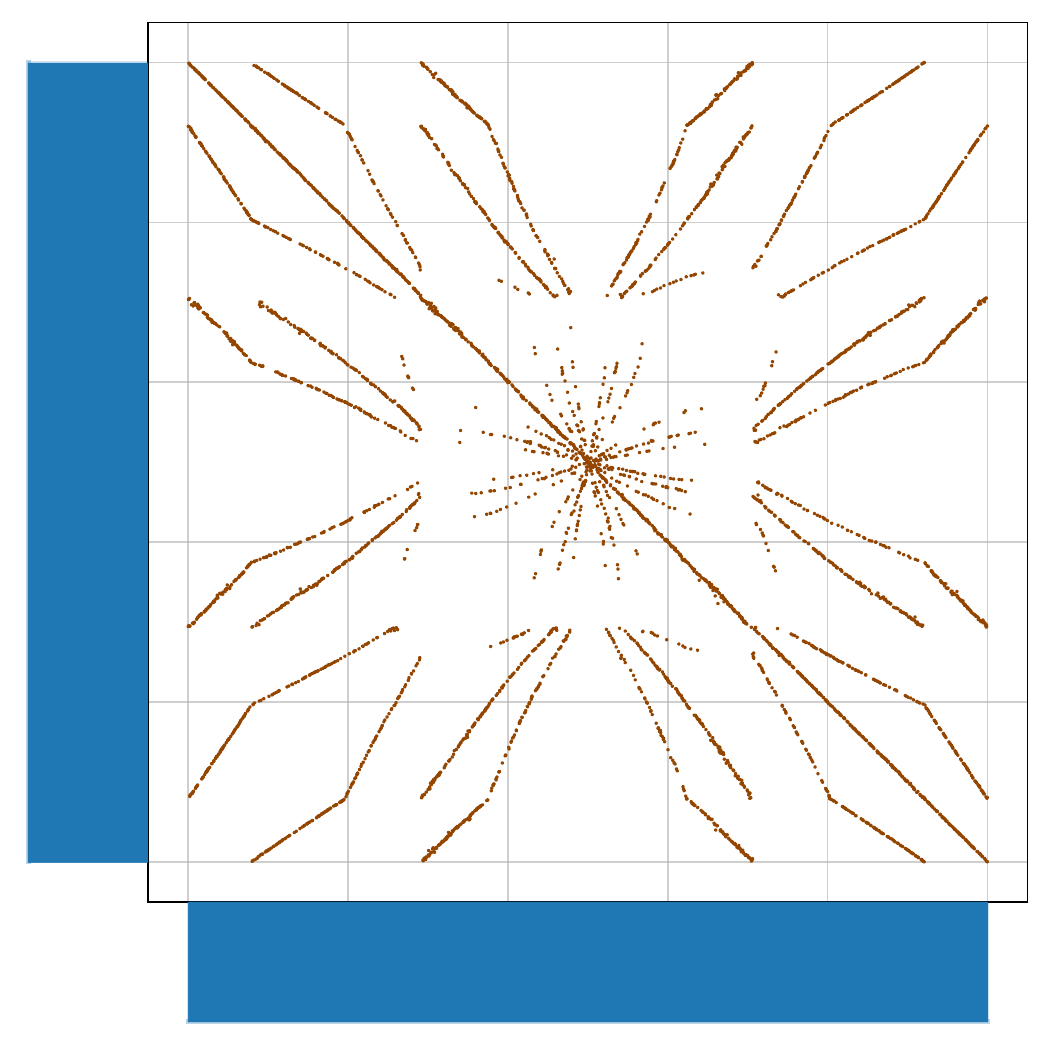}
\caption{$D = 6$}
\end{subfigure}
\hfill
\begin{subfigure}{0.29\textwidth}
\includegraphics[width=\textwidth]{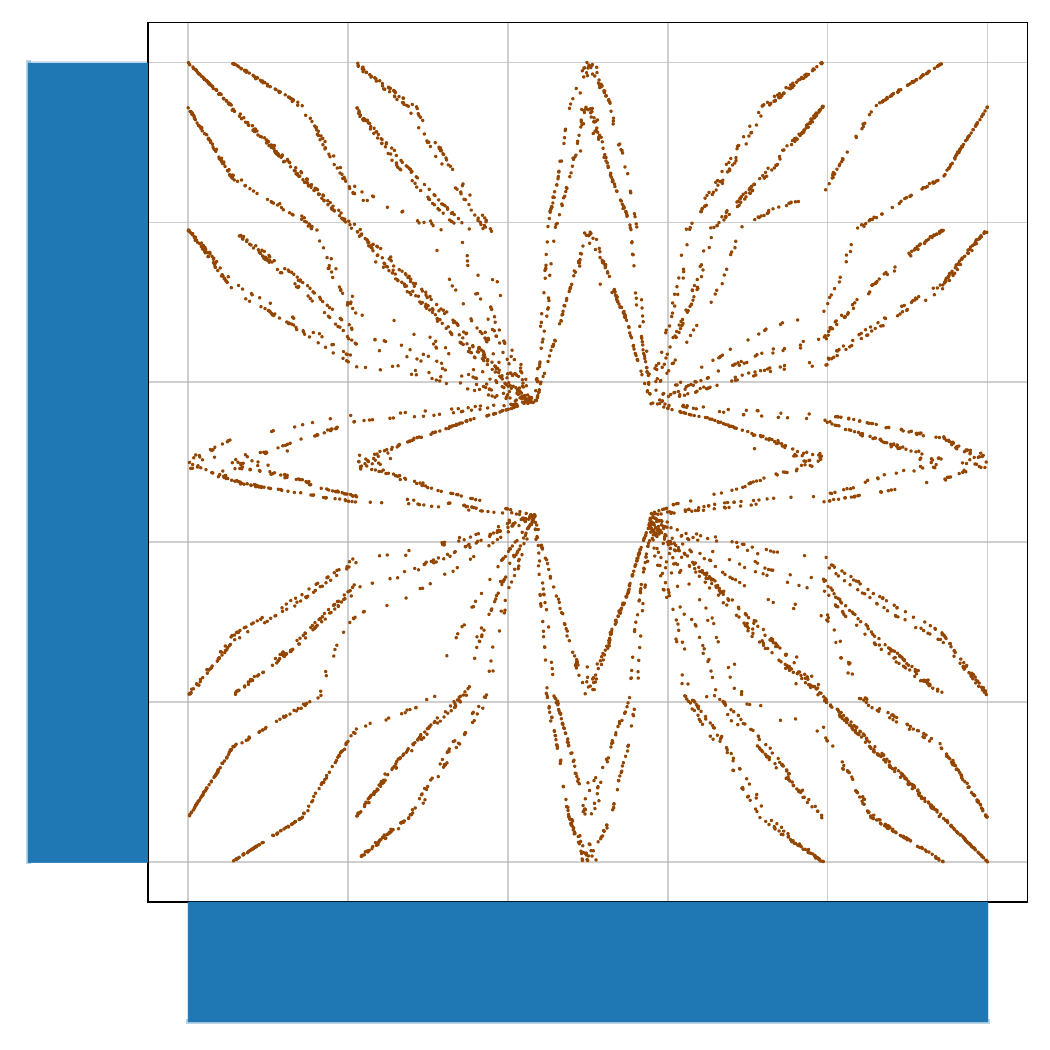}
\caption{$D = 7$}
\end{subfigure}

\smallskip
\begin{subfigure}{0.29\textwidth}
\includegraphics[width=\textwidth]{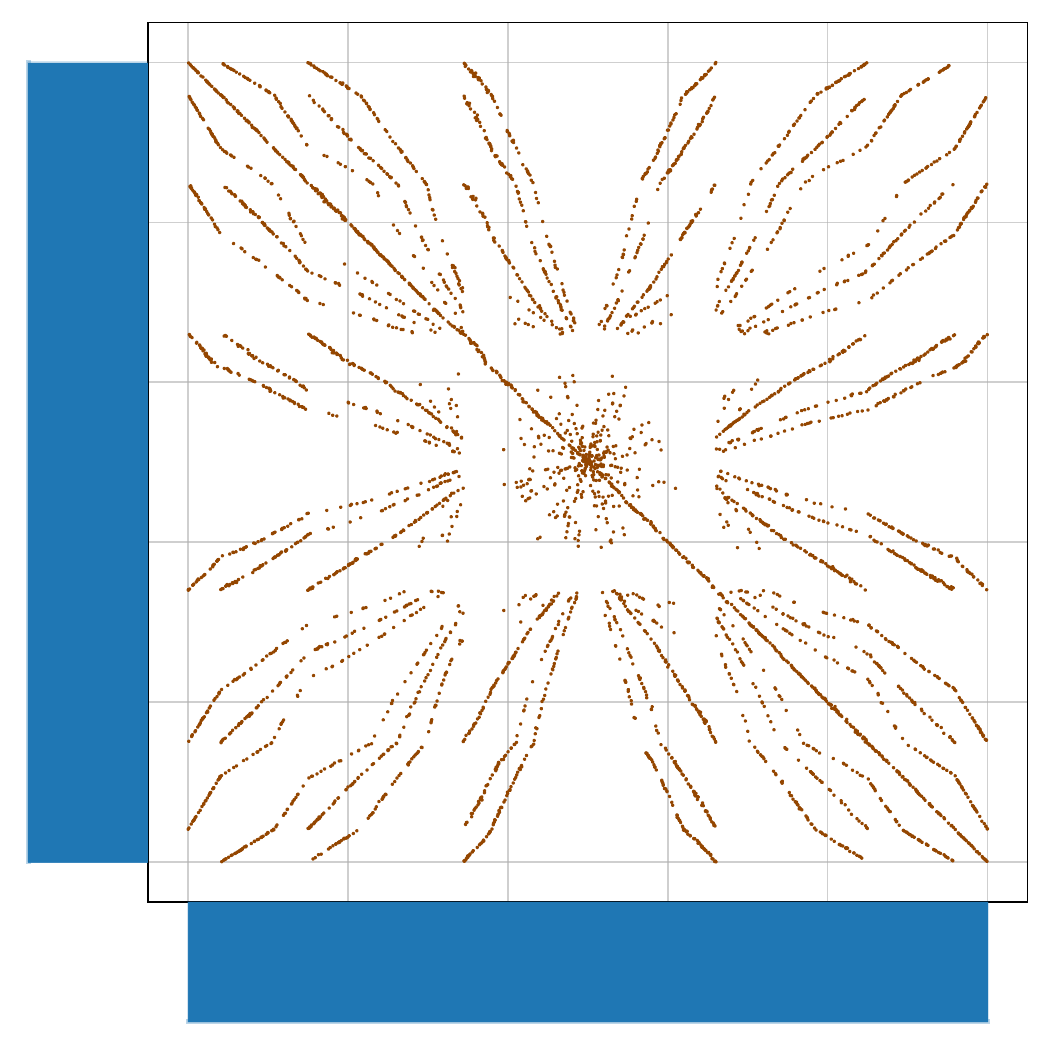}
\caption{$D = 8$}
\end{subfigure}
\hfill
\begin{subfigure}{0.29\textwidth}
\includegraphics[width=\textwidth]{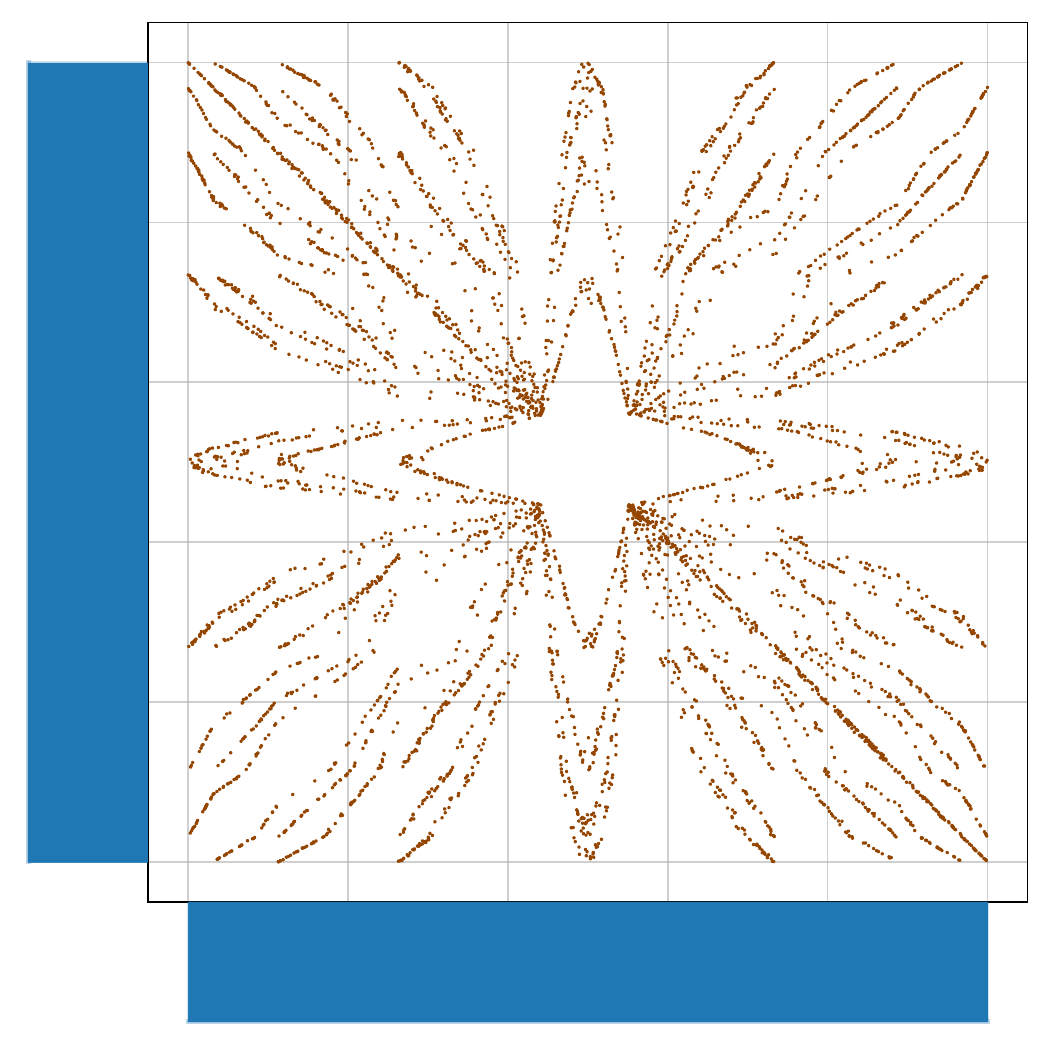}
\caption{$D = 9$}
\end{subfigure}
\hfill
\begin{subfigure}{0.29\textwidth}
\includegraphics[width=\textwidth]{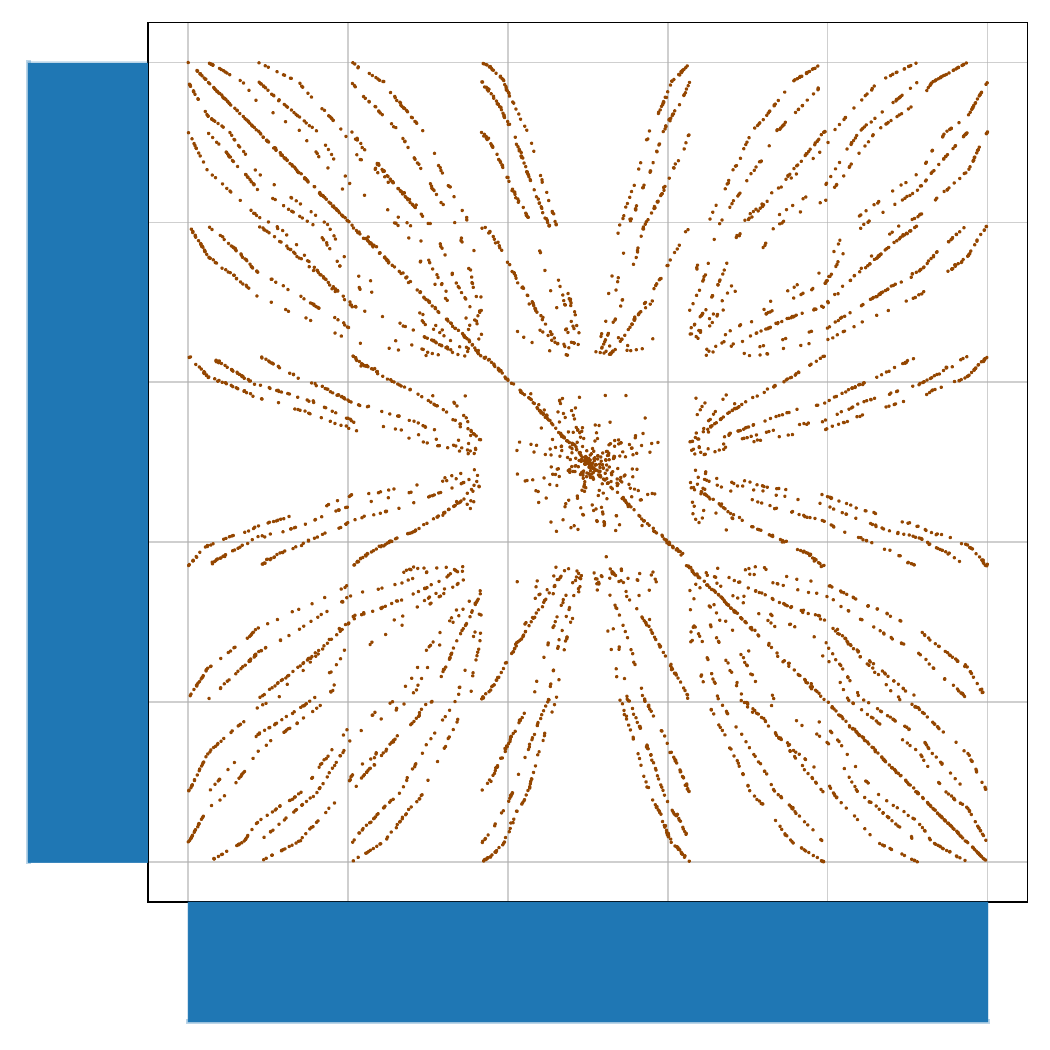}
\caption{$D = 10$}
\end{subfigure}
\caption{Projections on the first two axes of the numerical solutions found for Problem~\eqref{eq:risk_DFT_problem}, with $\rho = \L_{(0,1)}$ the uniform measure on $[0,1]$ and the quadratic spectral function $\alpha(t) \propto (t+\eta)^2-\eta$, with offset $\eta = 0.1$ ensuring nonzero derivative at $t=0$. Each subfigure corresponds to a different number $D$ of marginals.
We used $N = 5\ 000$ points, and the sequence of penalty coefficients $\lambda \in \{10^k : k=-3,-2,\dots,6 \}$.
The blue area at the bottom and left sides of each figure represent the active submeasures.
}
\label{fig:DFT-quadratic_simulations}
\end{figure}

\begin{figure}
\centering
\begin{subfigure}{0.29\textwidth}
\centering
\includegraphics[width=\textwidth]{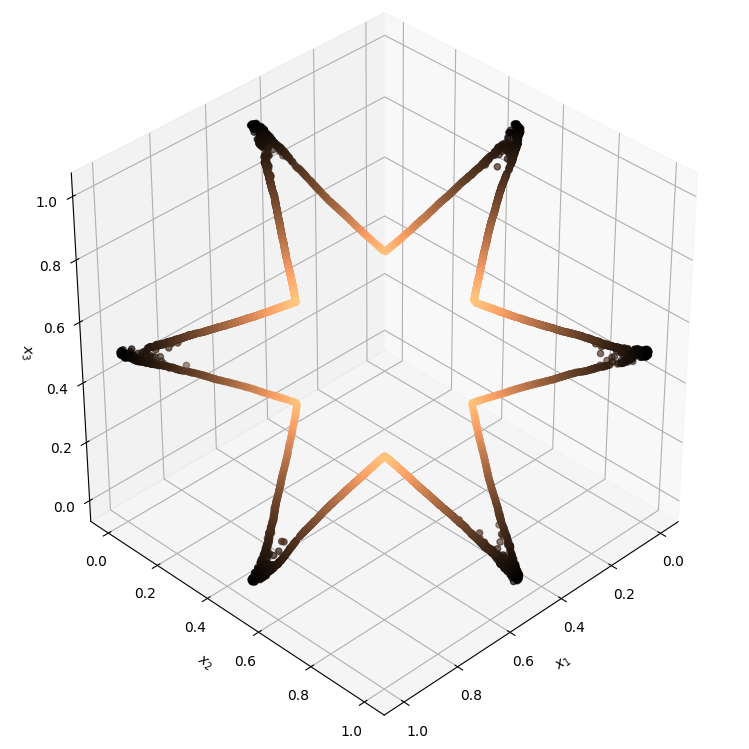}
\end{subfigure}
\hfill
\begin{subfigure}{0.38\textwidth}
\centering
\includegraphics[width=\textwidth]{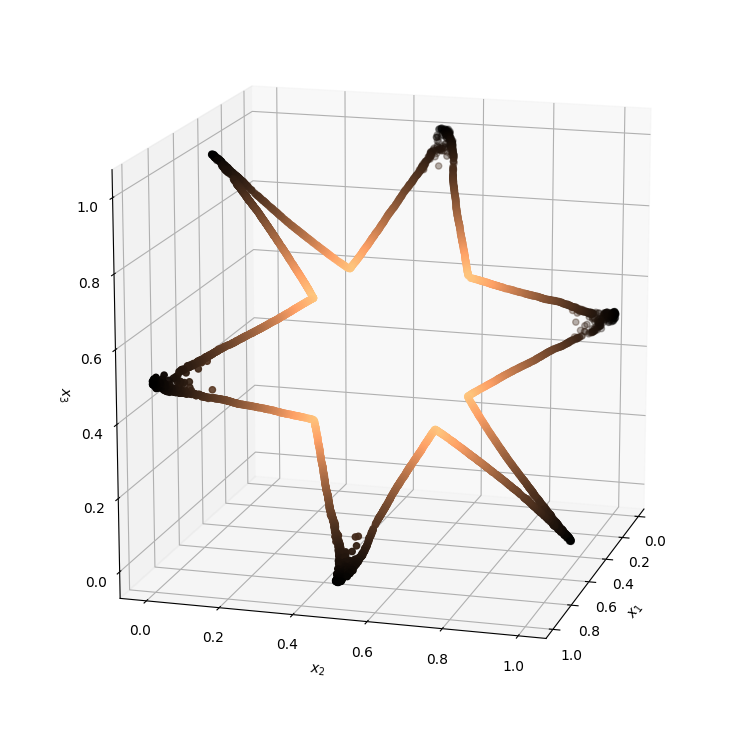}
\end{subfigure}
\hfill
\begin{subfigure}{0.29\textwidth}
\centering
\includegraphics[width=\textwidth]{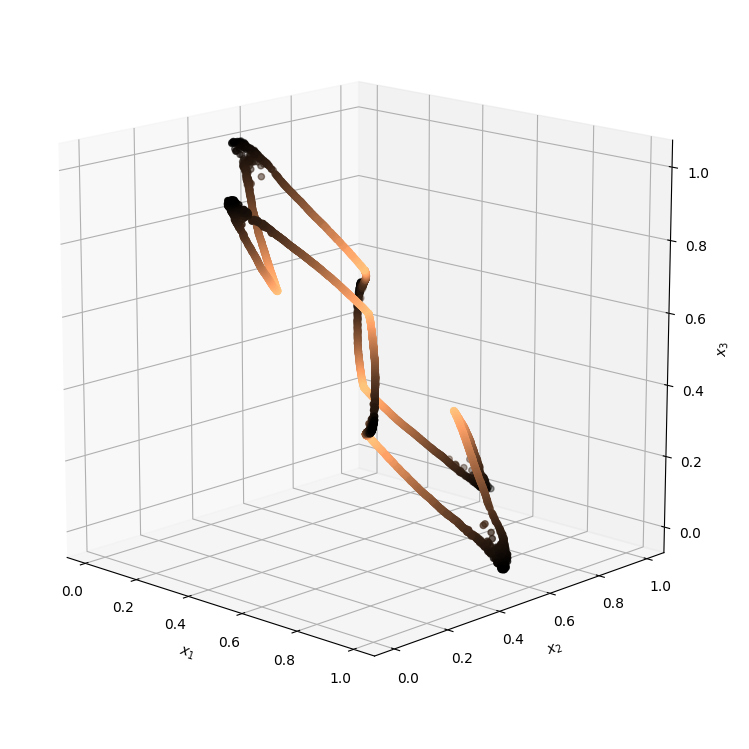}
\end{subfigure}
\caption{
Different views of the numerical solution corresponding to $D=3$ in Figure~\ref{fig:DFT-quadratic_simulations}. The color map represents the value of the Coulomb cost, gradually increasing from black to orange.
}
\label{fig:DFT-quadratic_D2}
\end{figure}

\subsection{Problem from~\cite{iooss2015review}}

In the example of Iooss and Lemaître's review~\cite{iooss2015review},
the random variable corresponding to the danger is the maximal annual overflow $S$ of the river.
It is modeled by the following expression, which stems from a simplification of the unidimensional hydro-dynamical equations of Saint-Venant,
\begin{equation}
\label{eq:maximal_annual_overflow}
S = Z_{\mathrm{v}} - H_{\mathrm{d}} - C_{\mathrm{b}} + \left( \frac{Q}{B K_{\mathrm{s}} \sqrt{\frac{Z_{\mathrm{m}}-Z_{\mathrm{v}}}{L}}} \right)^{0.6}.
\end{equation}
The marginal laws of the eight input variables are indicated in Table~\ref{tab:input_variables}. Since the first three terms of~\eqref{eq:maximal_annual_overflow} are decoupled from the other variables, they will not have any impact on the solution $\gamma$ of the risk estimation problem, and in particular the variables $H_{\mathrm{d}}$ and $C_{\mathrm{b}}$ can both be left out in the numerical simulations.
We therefore take the expression of the last term as the cost function $c : \X_1\times\dots\times\X_6 \to \R$, whose six arguments are the respective values of $Q$, $K_{\mathrm{s}}$, $Z_{\mathrm{v}}$, $Z_{\mathrm{m}}$, $L$, and $B$, with $\X_j$ are the respective supports of the corresponding marginal laws. The latter are represented in Figure~\ref{fig:density_profiles}.
As underlined in~\cite{ennaji2024robust}, this reduced cost function is strictly compatible, so that Lemma~\ref{lem:risk_problem_and_supermodularity} holds, with the adequate sign modifications required for its extension to compatibility, see Remark~\ref{rem:compatibility}. We denote the random variable corresponding to the reduced cost by
\begin{equation*}
S_{\mathrm{r}}
= \left( \frac{Q}{B K_{\mathrm{s}} \sqrt{\frac{Z_{\mathrm{m}}-Z_{\mathrm{v}}}{L}}} \right)^{0.6}.
\end{equation*}

\begin{table}[!ht]
\begin{center}
\begin{tabular}{lccc}
Input & Description & Unit & Probability distribution
\\
\hline
$Q$ & Maximal annual flowrate & m$^3$/s &
\begin{tabular}{@{}c@{}}
Truncated Gumbel ${\mathcal G}(1013, 558)$
\\
on $[500,3000]$
\end{tabular}
\\[10pt]
$K_{\mathrm{s}}$ & Strickler coefficient & - &
\begin{tabular}{@{}c@{}}
Truncated normal ${\mathcal N}(30,8)$
\\
on $[15,+\infty)$
\end{tabular}
\\[10pt]
$Z_{\mathrm{v}}$ & River downstream level & m & Triangular ${\mathcal T}(49,50,51)$
\\
$Z_{\mathrm{m}}$ & River upstream level & m & Triangular ${\mathcal T}(54,55,56)$
\\
$H_{\mathrm{d}}$ & Dyke height & m & Uniform ${\mathcal U}[7, 9]$
\\
$C_{\mathrm{b}}$ & Bank level & m & Triangular ${\mathcal T}(55,55.5,56)$
\\
$L$ & Length of the river stretch & m & Triangular ${\mathcal T}(4990,5000,5010)$
\\
$B$ & River width & m & Triangular ${\mathcal T}(295,300,305)$
\\
\hline
\end{tabular}
\caption{Input variables of the flood model and their probability distributions.}
\label{tab:input_variables}
\end{center}
\end{table}

\begin{figure}[!ht]
\centering
\includegraphics[width=0.75\textwidth]{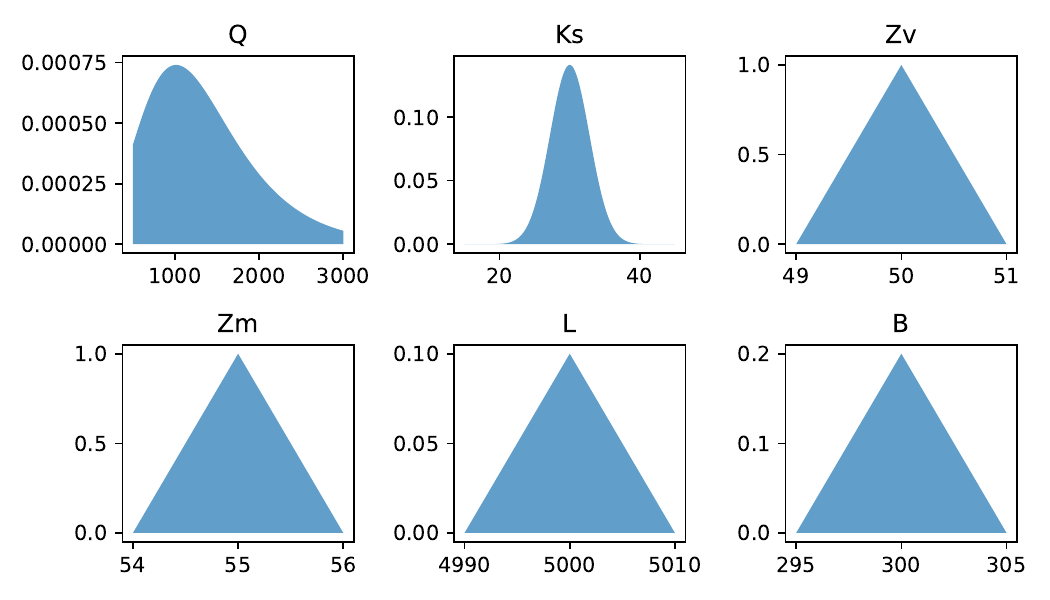}
\caption{Respective density profiles of the input variables.}
\label{fig:density_profiles}
\end{figure}

\begin{figure}[!ht]
\begin{center}
\begin{subfigure}{0.49\textwidth}
\centering
\includegraphics[width=0.9\textwidth]{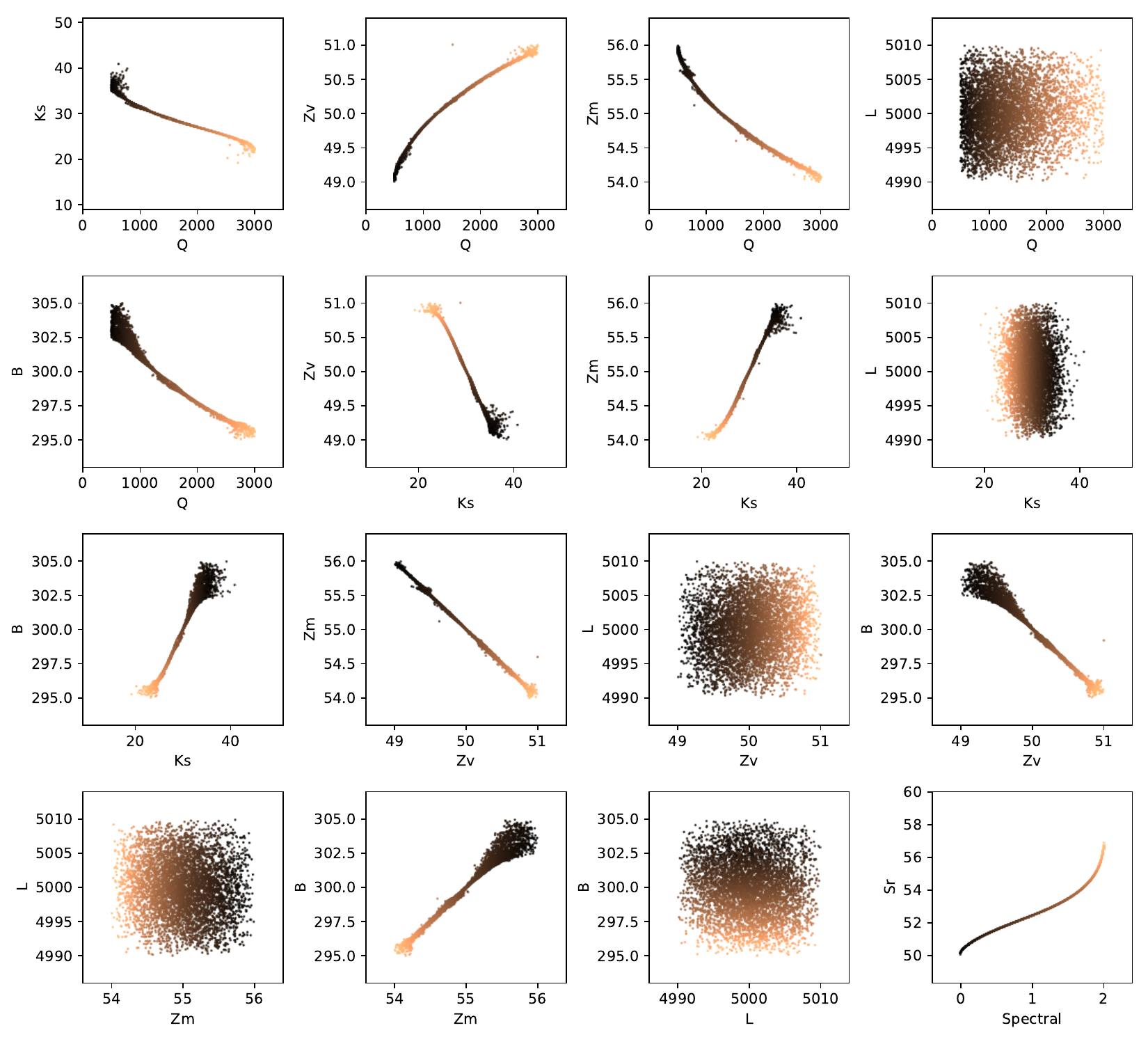}
\caption{Numerical solution}
\label{subfig:river_problem_numerical}
\end{subfigure}
\hfill
\begin{subfigure}{0.49\textwidth}
\centering
\includegraphics[width=0.9\textwidth]{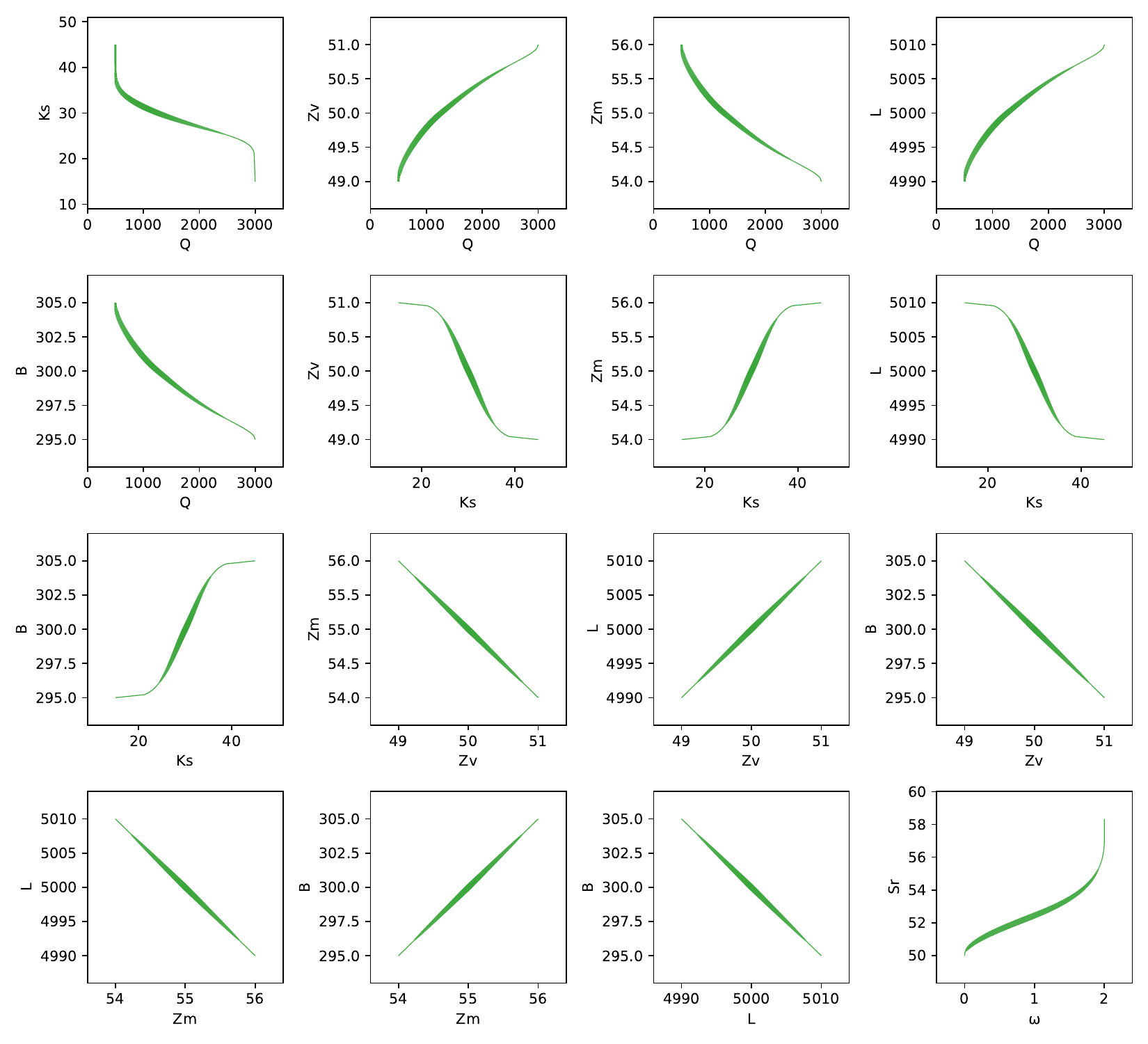}
\caption{Comonotone plan}
\label{subfig:river_problem_true}
\end{subfigure}
\end{center}
\caption{
The left-hand side subfigure shows the projections of the numerical solution corresponding to the linear spectral function $\alpha(t) = 2t$, on every fifteen possible pairs of the six main variables $Q$, $K_{\mathrm{s}}$, $Z_{\mathrm{v}}$, $Z_{\mathrm{m}}$, $L$, and $B$. The bottom right graph of this left-hand side subfigure corresponds to the joint projection of the auxiliary variable of law $\rho_0 = \alpha_\#\L_{(0,1)} = \U_{(0,2)}$ and of the cost variable $S_r$. The color map represents the value of $S_{\mathrm{r}}$, gradually increasing from black to orange.
We used $N = 5\ 000$ points, and the sequence of penalization coefficients $\lambda \in \{10^k : k = -2,-1,\dots,2\}$.
The right-hand side subfigure shows the same projections, but for the $s$-monotone coupling, which is the true solution thanks to compatibility of the cost function $s$.
This coupling was computed directly via the quantile functions of the marginal densities.
}
\label{fig:river_problem}
\end{figure}

\begin{figure}[!ht]
\centering
\includegraphics[width=0.75\textwidth]{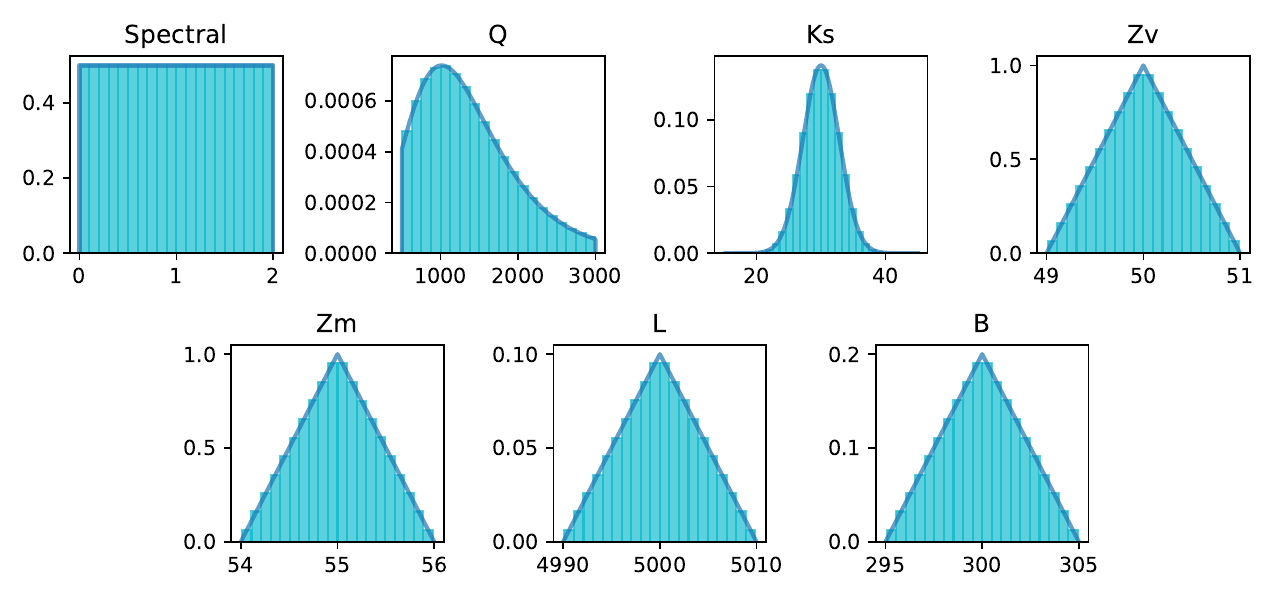}
\caption{Histograms for the numerical solution corresponding to a linear spectral function $\alpha(t) \propto t$. The top left graph corresponds to the density $\rho_0 = \alpha_\#\L_{(0,1)} = \U_{(0,2)}$.}
\label{fig:histograms_alpha_linear_river}
\end{figure}

\begin{figure}[!ht]
\centering
\includegraphics[width=0.5\textwidth]{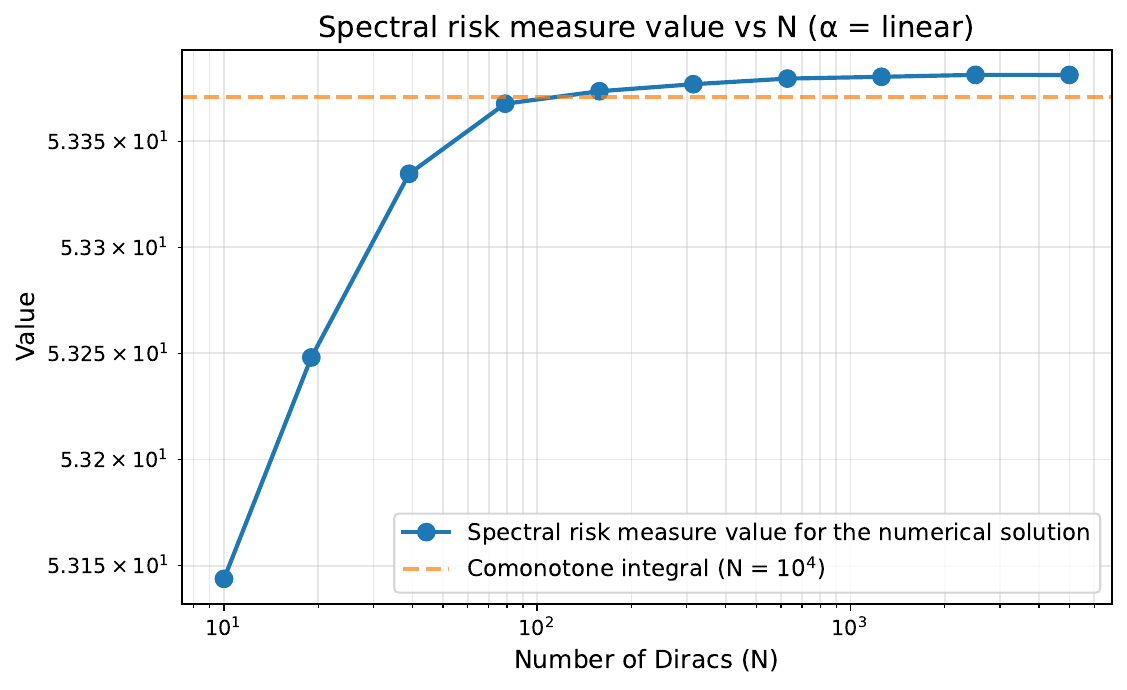}
\caption{Value $\Rr_\alpha(c_\#\delta_{Y_N})$ for the numerical solution in terms of the number of Dirac points, for the linear spectral function $\alpha(t) \propto t$. The reference value was computed using an explicit uniform approximations of the comonotone plan, with $N = 10^4$ Dirac points.}
\label{fig:R_alpha_value_as_function_of_N_alpha_linear_river}
\end{figure}

The $s$-monotone plan --- and a fortiori the $c$-monotone plan --- can be directly computed from the quantile functions of the marginals $\rho_0$, $\rho_1$, \dots, $\rho_D$, and is represented in Figure~\ref{subfig:river_problem_true}. We plot its projections on all fifteen possible pairs of the six main variables $Q$, $K_{\mathrm{s}}$, $Z_{\mathrm{v}}$, $Z_{\mathrm{m}}$, $L$, and $B$, as well as the joint measure $\tau$ it induces between $\rho_0$ and the cost random variable $S_{\mathrm{r}}$.

Figures~\ref{subfig:river_problem_numerical} and~\ref{fig:histograms_alpha_linear_river} show the numerical solution we find for the linear spectral function $\alpha(t) \propto t$.
We used $N=5\ 000$ Dirac points, and the sequence of penalty coefficients
$\lambda
\in \{10^{-2}, 10^{-1}, 10^0, 10^1, 10^2\}
$.
Note that whereas the pairwise correlations between the five variables $Q$, $K_{\mathrm{s}}$, $Z_{\mathrm{v}}$, $Z_{\mathrm{m}}$, and $B$ are fairly well reproduced, the variable $L$ do not seem to be correlated to any of the five other variables.
This is due to the very small relative range of $\rho_5 = \mathrm{Law}(L)$ (Figure~\ref{fig:density_profiles}), which makes its impact on the cost much weaker than that of the other variables. And indeed, changing $\rho_5$ for a probability density with a wider range yields a numerical solution with all six variables clearly correlated.
In each graph of Figure~\ref{subfig:river_problem_numerical}, the colormap represents the value of $S_r$ at each Dirac mass, with increasing value from black to orange. As expected, the correlations are more fuzzy in the areas corresponding to low values of $S_r$, since these areas less heavily weighted by the spectral function.

In Figure~\ref{fig:R_alpha_value_as_function_of_N_alpha_linear_river}, we plot the spectral risk measure value $\Rr_\alpha(\delta_{Y_N})$ of the computed numerical solution $Y_N$, as a function of $N$ the number of Dirac points.
The fact that the values exceed the reference value for large $N$ is due to the numerical solutions not being exactly optimal.

\paragraph*{\bf Acknowledgments}
\section*{Acknowledgments}
This work has been partially supported by the UDOPIA doctoral program, as well as the Agence nationale de la recherche, through the ANR project GOTA (ANR-23-CE46-0001) and the PEPR PDE-AI project (ANR-23-PEIA-0004).

\bibliographystyle{plain}
\bibliography{bibliography}

\end{document}

%% file: expected_shortfall.tikz
\definecolor{custom_blue}{RGB}{31,119,180}

\begin{tikzpicture}[scale=1.,
declare function={
f(\x) = {ifthenelse({\x>0},{exp(-1/(\x))},0.0)}; %
g(\x) = {f(\x)/(f(\x)+f(1-\x))}; %
h(\x) = {g(\x-1)}; %
bump(\x) = {1-g(\x^2)}; %
}]

\begin{axis}[xmin=-1.2, xmax=1.2, ymin=-0.2, ymax=1.2,
ticks=none, scale=0.8,
restrict y to domain=-0.2:1.2,
axis x line=center, axis y line=none,
axis on top, axis equal]

\fill[custom_blue, opacity=0.8, domain=0.5:1, variable=\x]
(0.5, 0)
-- plot (\x, {2*(-(\x-0.5)*(\x-0.4)*(\x+0.5) + 0.3)*bump(\x)*bump(\x-0.1)})
-- (1, 0)
-- cycle;

\draw[black, domain=0.5:1, variable=\x]
(0.5, 0)
-- plot (\x, {2*(-(\x-0.5)*(\x-0.4)*(\x+0.5) + 0.3)*bump(\x)*bump(\x-0.1)})
-- (1, 0)
-- cycle;

\addplot[black, samples=100, smooth, domain=-1:1, label={x}]
plot (\x, {2*(-(x-0.5)*(x-0.4)*(x+0.5) + 0.3)*bump(x)*bump(x-0.1)});

\addplot[black, samples=100, smooth, domain=-1.2:-1, label={x}]
plot (\x, {0*x});

\addplot[black, samples=100, smooth, domain=1:1.2, label={x}]
plot (\x, {0*x});

\draw [-, thick, black] (0.585,0) -- (0.585,-0.05)
node[below] {\small $\mathrm{CVaR}_m(\mu)$};
\draw (-0.5,0.6) node {
density of $\mu$};
\draw (0.9,0.7) node {
mass $m$};
\draw [-] (0.8,0.6) -- (0.6,0.2);
\draw (1.1,0.15) node {
\small $\mathbb R$};

\end{axis}

\end{tikzpicture}

%% file: bibliography.bib
@article{rockafellar2014random,
  title={Random variables, monotone relations, and convex analysis},
  author={Rockafellar, R Tyrrell and Royset, Johannes O},
  journal={Mathematical Programming},
  volume={148},
  pages={297--331},
  year={2014},
  publisher={Springer}
}

@article{iooss2015review,
  title={A review on global sensitivity analysis methods},
  author={Iooss, Bertrand and Lemaître, Paul},
  journal={Uncertainty management in simulation-optimization of complex systems: algorithms and applications},
  pages={101--122},
  year={2015},
  publisher={Springer}
}

@article{ennaji2024robust,
  title={Robust risk management via multi-marginal optimal transport},
  author={Ennaji, Hamza and M\'{e}rigot, Quentin and Nenna, Luca and Pass, Brendan},
  journal={Journal of Optimization Theory and Applications},
  pages={1--28},
  year={2024},
  publisher={Springer}
}

@article{santambrogio2015optimal,
  title={Optimal transport for applied mathematicians},
  author={Santambrogio, Filippo},
  journal={Birk{\"a}user, NY},
  volume={55},
  number={58-63},
  pages={94},
  year={2015},
  publisher={Springer}
}

@article{cances2025unidimensional,
  title={Unidimensional semi-discrete partial optimal transport},
  author={Cances, Adrien and Leclerc, Hugo},
  journal={arXiv preprint arXiv:2509.08799},
  year={2025}
}

@article{carlier2003class,
  title={On a class of multidimensional optimal transportation problems},
  author={Carlier, Guillaume},
  journal={Journal of convex analysis},
  volume={10},
  number={2},
  pages={517--530},
  year={2003},
  publisher={HELDERMANN VERLAG LANGER GRABEN 17, 32657 LEMGO, GERMANY}
}

@article{deelstra2011overview,
  title={An overview of comonotonicity and its applications in finance and insurance},
  author={Deelstra, Griselda and Dhaene, Jan and Vanmaele, Michele},
  journal={Advanced mathematical methods for finance},
  pages={155--179},
  year={2011},
  publisher={Springer}
}

@phdthesis{pass2012structural,
  title={Structural results on optimal transportation plans},
  author={Pass, Brendan},
  year={2012},
  school={University of Toronto}
}

@article{pass2012local,
  title={On the local structure of optimal measures in the multi-marginal optimal transportation problem},
  author={Pass, Brendan},
  journal={Calculus of Variations and Partial Differential Equations},
  volume={43},
  number={3},
  pages={529--536},
  year={2012},
  publisher={Springer}
}

@article{pass2015multi,
  title={Multi-marginal optimal transport: theory and applications},
  author={Pass, Brendan},
  journal={ESAIM: Mathematical Modelling and Numerical Analysis},
  volume={49},
  number={6},
  pages={1771--1790},
  year={2015}
}

@article{kim2014general,
  title={A general condition for Monge solutions in the multi-marginal optimal transport problem},
  author={Kim, Young-Heon and Pass, Brendan},
  journal={SIAM Journal on Mathematical Analysis},
  volume={46},
  number={2},
  pages={1538--1550},
  year={2014},
  publisher={SIAM}
}

@inproceedings{kitagawa2015multi,
  title={The multi-marginal optimal partial transport problem},
  author={Kitagawa, Jun and Pass, Brendan},
  booktitle={Forum of Mathematics, Sigma},
  volume={3},
  pages={e17},
  year={2015},
  organization={Cambridge University Press}
}

@article{colombo2015multimarginal,
  title={Multimarginal optimal transport maps for one--dimensional repulsive costs},
  author={Colombo, Maria and De Pascale, Luigi and Di Marino, Simone},
  journal={Canadian Journal of Mathematics},
  volume={67},
  number={2},
  pages={350--368},
  year={2015},
  publisher={Cambridge University Press}
}

@article{merigot2016minimal,
  title={Minimal geodesics along volume-preserving maps, through semidiscrete optimal transport},
  author={M{\'e}rigot, Quentin and Mirebeau, Jean-Marie},
  journal={SIAM Journal on Numerical Analysis},
  volume={54},
  number={6},
  pages={3465--3492},
  year={2016},
  publisher={SIAM}
}

@article{bencheikh2022approximation,
  title={Approximation rate in Wasserstein distance of probability measures on the real line by deterministic empirical measures},
  author={Bencheikh, Oumaima and Jourdain, Benjamin},
  journal={Journal of Approximation Theory},
  volume={274},
  pages={105684},
  year={2022},
  publisher={Elsevier}
}
